\documentclass{article}
\usepackage[T1]{fontenc}
\usepackage{soul}
\usepackage{amsmath}
\usepackage{lmodern}
\usepackage{amsthm}
\usepackage{authblk}

\usepackage{bbding}
\usepackage[ruled,vlined,linesnumbered]{algorithm2e}
\DontPrintSemicolon
\SetKwInput{Input}{Input}
\SetKwInput{Output}{Output}
\SetKw{To}{ to }
\SetKw{Print}{Print }
\SetAlFnt{\small}

\usepackage{amssymb}

\usepackage{algpseudocode}
\usepackage{graphics}
\usepackage{bbm}
\usepackage{tikz}
\usepackage{bm}
\usepackage{appendix}
\usepackage{hyperref}

\usepackage{graphicx}
\usepackage{subfig}

\usepackage{multirow}
\usepackage{rotating}
\usepackage{verbatim}
\usepackage{optidef}
\usepackage{cleveref}
\usepackage{array}
\usepackage{tabularx}
\usepackage{arydshln}
\usepackage{easybmat}
\usepackage{enumitem}
\usepackage{xcolor}
\usetikzlibrary{positioning}
\newcommand*\xbar[1]{%
  \hbox{%
    \vbox{%
      \hrule height 0.5pt 
      \kern0.5ex
      \hbox{%
        \kern-0.1em
        \ensuremath{#1}%
        \kern-0.1em
      }%
    }%
  }%
} 

\usepackage[T1]{fontenc}
%
\usepackage{graphicx}
%
%

\newcommand{\R}{\mathbb{R}}
\newcommand{\conv}[0]{{\mathbf{conv}}}
\newcommand{\proj}[0]{{\mathbf{proj}}}


\newcommand{\OPT}{\textup{OPT}}
\newcommand{\LAG}{\textup{DUAL}}

\newcommand{\cT}{\mathcal{T}}

\newcommand{\uv}[0]{\mathbf{u}}
\newcommand{\vv}[0]{\mathbf{v}}
\newcommand{\xv}[0]{\mathbf{x}}
\newcommand{\yv}[0]{\mathbf{y}}
\newcommand{\pv}[0]{\mathbf{p}}

\newcommand{\iprod}[2]{\left\langle {#1}, {#2} \right\rangle}

\DeclareMathOperator{\argmin}{\operatorname{argmin}}

\newcommand{\CS}{\mathcal{S}}
\newcommand{\CQ}{\mathcal{Q}}
\newcommand{\CX}{\mathcal{X}}

\newcommand{\CT}{\mathcal{T}}

\newcommand{\norm}[1]{\left\lVert#1\right\rVert}

\newcommand{\dv}[0]{\mathbf{d}}
\newcommand{\ev}[0]{\mathbf{e}}

\newcommand{\cv}[0]{\mathbf{c}}

\newcommand{\qv}[0]{\mathbf{q}}
\newcommand{\bv}[0]{\mathbf{b}}
\newcommand{\gv}[0]{\mathbf{g}}

\newcommand{\wv}[0]{\mathbf{w}}

\newcommand{\hv}[0]{\mathbf{h}}

\usepackage{booktabs}
\usepackage{pgf}
\pgfmathsetmacro{\backspace}{-0.5}

\let\subparagraph\paragraph
\usepackage[compact]{titlesec}
    \titlespacing{\section}{0pt}{2ex}{1ex}
    \titlespacing{\subsection}{0pt}{1ex}{0ex}
    \titlespacing{\subsubsection}{0pt}{0.5ex}{0ex}

\newcolumntype{L}{>{\centering\arraybackslash}m{4cm}}
\newcolumntype{R}{>{\centering\arraybackslash}m{2cm}}


\newtheorem{theorem}{Theorem}
\newtheorem{proposition}[theorem]{Proposition}%
\newtheorem{remark}{Remark}%
\newtheorem{assumption}{Assumption}%
\newtheorem{corollary}{Corollary}%

\newtheorem{definition}{Definition}%
\newtheorem{lemma}{Lemma}

\providecommand{\keywords}[1]
{
  \small	
  \textbf{\textbf{Keywords: }} #1
}

\title{Lagrangian dual with zero duality gap that admits decomposition}
\author[1]{Diego Cifuentes}
\author[1]{Santanu S. Dey}
\author[1]{Jingye Xu}
\affil[1]{H. Milton Stewart School of Industrial and Systems Engineering, Georgia Institute of Technology, GA, USA}
\date{}

\begin{document}

\maketitle
\abstract{ For mixed integer programs (MIPs) with block structures and coupling constraints, on dualizing the coupling constraints the resulting Lagrangian relaxation becomes decomposable into blocks which allows for the use of parallel computing. However, the resulting Lagrangian dual can 
    have non-zero duality gap due to the inherent non-convexity of MIPs. In this paper, we propose two reformulations of such MIPs by adding redundant constraints, such that the Lagrangian dual obtained by dualizing the coupling constraints and the redundant constraints have zero duality gap while still remaining decomposable. One of these reformulations is similar, although not the same as the RLT hierarchy. In this case, we present multiplicative bounds on the quality of the dual bound at each level of the hierarchy for packing and covering MIPs. We show our results are applicable to general sparse MIPs, where decomposability is revealed via the tree-decomposition of the intersection graph of the constraint matrix. In preliminary experiments, we observe that the proposed Lagrangian duals give better dual bounds than classical Lagrangian dual and Gurobi in equal time, where Gurobi is not exploiting decomposability.}

\keywords{Distributed Computing, Lagrangian Dual, Strong Duality}


\section{Introduction}
Consider a two-block mixed integer program (MIP) with coupling constraints of the following form:
\begin{subequations}
\label{prob}
\begin{align}
\label{eq:obj}\OPT :=  \min_{(\xv,\yv)} \ & \sum_{i \in \{1,2\}} \iprod{\cv^{(i)}}{\xv^{(i)}} + \iprod{\dv^{(i)}}{\yv^{(i)}} \\
\label{eq:sub}\text{s.t.}\ & (\xv^{(i)},\yv^{(i)}) \in \mathcal{X}^{(i)},\forall i \in \{1,2\}, \\
\label{prob_link_constr}
\ & \xv^{(1)} = \xv^{(2)} \in \{0,1\}^n.
\end{align}
\end{subequations}
where $\mathcal{X}^{(i)} := \left\{ (\xv^{(i)},\yv^{(i)}) \;\middle\vert\;
   \begin{array}{@{}l@{}} A^{(i)} \xv^{(i)} + B^{(i)} \yv^{(i)} \leq \bv^{(i)}, \\
   \yv^{(i)}  \text{ is nonnegative and mixed-integer},\\
   \xv^{(i)} \in \{0,1\}^n
   \end{array} 
\right\}$ with  $A^{(i)}$, $B^{(i)}$, $\bv^{(i)}$, $\cv^{(i)}$, $\dv^{(i)}$ being rational data of suitable dimension for each $i \in \{1,2\}$.

If the coupling constraints (\ref{prob_link_constr}) are ignored, then the remaining problem can be decomposed into independent optimization tasks over each $\mathcal{X}^{(i)
}$. One classic approach that exploits this structure to obtain dual bounds for (\ref{prob}) is that of 
Lagrangian relaxation~\cite{geoffrion2009lagrangean}. Specifically,  by dualizing (\ref{prob_link_constr}), we obtain:
\begin{equation}
    \label{dual_prob_inner}
    \begin{aligned}
        L(\lambda) := \min_{(\xv,\yv)} & \left(\sum_{i \in \{1,2\}} \iprod{\cv^{(i)}}{\xv^{(i)}} + \iprod{\dv^{(i)}}{\yv^{(i)}}\right) + \iprod{\lambda}{\xv^{(1)} - \xv^{(2)}}  \\
        \ & \text{s.t.}\ (\xv^{(i)},\yv^{(i)}) \in \mathcal{X}^{(i)},\forall i \in \{1,2\},
    \end{aligned}
\end{equation}
and 
\begin{equation}
    \label{dual_prob}
    \begin{aligned}
        \LAG:= \max_{\lambda}  L(\lambda).
    \end{aligned}
\end{equation}
It is well-known that $L(\lambda)$ is a non-smooth concave function and sub-gradients for $L(\lambda)$ can be obtained by solving (\ref{dual_prob_inner}), which is a collection of independent optimization tasks over $\mathcal{X}^{(i)}$ \emph{that could be solved in parallel}. Using these sub-gradients, 
one can solve (\ref{dual_prob}) via 
non-smooth optimization methods~\cite{bagirov2014introduction}. 

Even though weak duality always holds, due to non-convexity, strong duality generally fails, that is, $\OPT > \LAG $. 
On the other hand, one can solve (\ref{prob}) directly without exploiting decomposability,  trivially obtaining zero duality gap. 
This motivates the main question of this paper: \emph{Can we obtain zero duality gap using Lagrangian-style relaxations while simultaneously allowing decomposition into sub-problems?}
Our main contributions are:

(1.) Obtaining zero duality gap and decomposability simultaneously: We design two reformulations of (\ref{prob}) whose Lagrangian duals, namely M-Lagrangian dual (Section~\ref{sec:MV}, Section~\ref{sec:M2}) and V-Lagrangian dual (Section~\ref{sec:MV},  Section~\ref{sec:v-converge}), achieve the twin goal of zero duality gap and decomposability. The M-Lagrangian method is a hierarchy of reformulations of (\ref{prob}) similar to the Reformulation-Linearization-Technique (RLT)~\cite{adams1990linearization} but not the same, whose
Lagrangian duals achieve zero duality gap in the last step of the hierarchy, while simultaneously each level admits decomposition into sub-problems.

(2.) Analysis of bounds: We present multiplicative bounds on the duality gap at different levels of the M-Lagrangian hierarchy for packing and covering problems; see Section~\ref{sec:M2PC}.

(3.) Run-time for solving V-Lagrangian dual: We present bounds on the number of iterations needed to solve the V-Lagrangian dual via sub-gradient methods in comparison to an algorithm in~\cite{ahmed2013scenario}, see Section~\ref{sec:v-converge}.

(4.) Generalization to arbitrary MIPs: Consider a loosely coupled general MIP where the block structure is revealed using a tree-decomposition~\cite{diestel2024graph}  of the intersection graph~\cite{fulkerson1965incidence} of the constraint matrix. Here, the blocks correspond to smaller problems defined over variables in the bags of the tree-decomposition. We show how to generalize the above results: simultaneously achieving decomposability and strong duality, and multiplicative bounds for packing and covering instances, for the M-Lagrangian dual to this setting, see Section~\ref{sec:general}.

(5.) Preliminary computational results: We illustrate how the proposed Lagrangian duals can outperform classical Lagrangian relaxation and Gurobi in terms of dual bounds achieved,  where Gurobi solves the whole problem without exploiting decomposition; see Section~\ref{sec:compute}.

Finally, in Section~\ref{sec:conclude} we make concluding remarks.

One another exact algorithm that admits decomposition for (\ref{prob}) (and more generally via tree-decomposition of intersection graph) is the Dynamic Programming (DP) algorithm~\cite{aris1964optimization,bertele1973non,nerohauser1966introduction,bienstock2018lp,laurent2009sums}. However,  DP suffers from the disadvantage that we do not obtain any information on bounds until the algorithm terminates. On the other hand, the advantage of Lagrangian duals is that even if one terminates without reaching optimality, it is possible to retrieve dual bounds. 

\emph{Literature Survey.} There are very few exact algorithms that also achieve decomposability. Apart from DP discussed above, in the special case of two-stage stochastic programming, there are specialized algorithms ~\cite{ahmed2013scenario,zhang2014finitely} and the integer L-shaped method~\cite{laporte1993integer,angulo2016improving}. Attempts to achieve the above two goals simultaneously fail for Augmented Lagrangian method (AL)~\cite{feizollahi2017exact,gu2020exact} which achieves strong duality, but not decomposability as the penalty terms have to be norms. Recently, the paper ~\cite{sun2024decomposition}, develops a variant of AL method that achieves decomposability.  

\emph{Notation.}
For a positive integer $n$, we use $[n]$ to denote the set $\{1, \dots, n\}$. Given a finite set $S$ we represent its power set as $2^{S}$. We denote the collection of subsets of $[n]$ containing all elements of cardinality $k$ or less as
$\binom{[n]}{\leq k}$. Given a graph $G$, we refer to its vertices by $V(G)$ and its edges by $E(G)$.

\section{Decomposable reformulation and its Lagrangian dual for two-block problem.}\label{sec:2block}

\subsection{Two reformulations.}\label{sec:MV}
The main idea of the reformulations is to add redundant constraints to (\ref{prob}) that are separable after dualizing. 
Consider a separable constraint implied by $\xv^{(1)} = \xv^{(2)} \in\{0,1\}^n$, that is,
\begin{equation}
    \begin{aligned}
        \label{implied}
        \xv^{(1)} = \xv^{(2)} \in \{0,1\}^n \implies F^{(1)}(\xv^{(1)}) + F^{(2)}(\xv^{(2)}) \geq 0.
    \end{aligned}
\end{equation}

We extend $\mathcal{X}^{(i)}$ to $\mathcal{X}^{(i)}_{ex}$ in the following fashion: $
        \mathcal{X}^{(i)}_{ex} := \{(\xv^{(i)},\yv^{(i)},w^{(i)}) : (\xv^{(i)},\yv^{(i)}) \in \mathcal{X}^{(i)}, w^{(i)} = F^{(i)}(\xv^{(i)})\},\forall i \in \{1,2\}$,
%
%
and obtain the following reformulation of (\ref{prob}):
\begin{subequations}
\label{ex_main_prob}
\begin{align}
\OPT :=  \min_{(\xv,\yv,\wv)} \ & \sum_{i \in \{1,2\}}  \iprod{\cv^{(i)}}{\xv^{(i)}} + \iprod{\dv^{(i)}}{\yv^{(i)}} \\
\text{s.t.}\ & (\xv^{(i)},\yv^{(i)},w^{(i)}) \in \mathcal{X}^{(i)}_{ex},\forall i \in \{1,2\}, \\
\label{ex_main_prob_link}
\ & \xv^{(1)} = \xv^{(2)}, \\
\label{ex_main_prob_link_extra}
\ & w^{(1)} + w^{(2)} \geq 0.
\end{align}
\end{subequations}

Since the constraint (\ref{ex_main_prob_link_extra}) is also a coupling constraint, we introduce additional multipliers and dualize it as follows:  
\begin{equation}
    \label{ex_dual_prob_inner}
    \begin{aligned}
        L^{ex}(\lambda,\mu) := \min_{(\xv,\yv,\wv)} & \left(\sum_{i \in \{1,2\}}  \iprod{\cv^{(i)}}{\xv^{(i)}} + \iprod{\dv^{(i)}}{\yv^{(i)}}\right) \\ & +   \iprod{\lambda}{\xv^{(1)} - \xv^{(2)}}  + \iprod{\mu}{w^{(1)} + w^{(2)}}\\
        \ & \text{s.t.}\ (\xv^{(i)},\yv^{(i)},w^{(i)}) \in \mathcal{X}^{(i)}_{ex},\forall i \in \{1,2\},
    \end{aligned}
\end{equation}
and
\begin{equation}
    \label{ex_dual_prob}
    \begin{aligned}
        \LAG^{ex} := \max_{\mu \geq 0,\lambda}  L^{ex}(\lambda,\mu).
    \end{aligned}
\end{equation}

Although the addition of (\ref{implied})  has no effect on (\ref{prob}), that is on the primal side the optimal value trivially remains the same; on the dual side they can indeed improve the bound, that is it is possible that $\LAG \neq \LAG^{ex}$.

We first establish a result to identify when $\LAG = \LAG^{ex}.$
The following result is a strengthening on a result from~\cite{lemarechal2001geometric} and Details of the proof are presented in Appendix \ref{sec:redundant_constraint}.
\begin{proposition}   \label{prop:redundant_constraint}
    If either $F^{(1)}(\cdot)$ or $F^{(2)}(\cdot)$ is affine, then $\LAG = \LAG^{ex}.$    
\end{proposition}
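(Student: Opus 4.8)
The plan is to show that adding the redundant constraint and dualizing it does not change the Lagrangian dual value when one of the split functions, say $F^{(2)}$, is affine, by absorbing the extra term $\iprod{\mu}{w^{(1)}+w^{(2)}}$ back into the original Lagrangian via a change of multipliers. The key observation is that $\LAG$ equals the optimal value of the convexified primal problem $\min \{\sum_i \iprod{\cv^{(i)}}{\xv^{(i)}} + \iprod{\dv^{(i)}}{\yv^{(i)}} : (\xv^{(i)},\yv^{(i)}) \in \conv(\mathcal{X}^{(i)}),\ \xv^{(1)}=\xv^{(2)}\}$ (the standard geometric description of the Lagrangian dual, as in~\cite{lemarechal2001geometric}), and similarly $\LAG^{ex}$ is the value of the analogous problem over $\conv(\mathcal{X}^{(i)}_{ex})$ with the added constraint $w^{(1)}+w^{(2)}\geq 0$. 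Since $\LAG^{ex}\geq\LAG$ always holds (adding constraints on the primal side and more multipliers on the dual side can only help), it suffices to prove $\LAG^{ex}\leq\LAG$, i.e. that every point feasible for the convexified $\mathcal{X}^{(i)}$-problem lifts to a point feasible for the convexified $\mathcal{X}^{(i)}_{ex}$-problem with the same objective.

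First I would set up the convex-hull reformulation of both duals carefully, noting that the objective does not involve $w^{(i)}$, so only feasibility matters for the lift. Take $(\bar\xv^{(1)},\bar\yv^{(1)},\bar\xv^{(2)},\bar\yv^{(2)})$ feasible for the $\conv(\mathcal{X}^{(i)})$ problem, so $\bar\xv^{(1)}=\bar\xv^{(2)}=:\bar\xv$ and $(\bar\xv^{(i)},\bar\yv^{(i)})\in\conv(\mathcal{X}^{(i)})$. Write each as a convex combination of extreme points of $\mathcal{X}^{(i)}$; on block $2$, since $F^{(2)}$ is affine, the corresponding $w^{(2)}$-values are affine images, so the natural lift satisfies $\bar w^{(2)} = F^{(2)}(\bar\xv^{(2)}) = F^{(2)}(\bar\xv)$ and $(\bar\xv^{(2)},\bar\yv^{(2)},\bar w^{(2)})\in\conv(\mathcal{X}^{(2)}_{ex})$ — this is exactly where affineness is used, so that $w^{(2)}$ is determined by $\xv^{(2)}$ even after convexification. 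On block $1$, lift to any $\bar w^{(1)}$ realized by the chosen convex combination; then $\bar w^{(1)}$ is a convex combination of the $F^{(1)}$-values at the extreme points used. The final step is to check the coupling constraint $\bar w^{(1)}+\bar w^{(2)}\geq 0$: using the implication~\eqref{implied} at each extreme point $\xv$ of the convex combination for block $1$ (each such point is in $\{0,1\}^n$, and together with the matching point on block $2$ at the same binary vector satisfies $F^{(1)}(\xv)+F^{(2)}(\xv)\geq 0$), plus convexity/affineness of $F^{(2)}$, one gets $\bar w^{(1)}\geq -\sum_k \theta_k F^{(2)}(\xv_k) \geq -F^{(2)}(\bar\xv)=-\bar w^{(2)}$ by Jensen applied to the affine (hence concave) $F^{(2)}$ — actually with $F^{(2)}$ affine this is an equality, giving $\bar w^{(1)}+\bar w^{(2)}\geq 0$ directly.

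The main obstacle I anticipate is the bookkeeping in the convex-combination argument: the extreme points of $\conv(\mathcal{X}^{(1)})$ used to represent $\bar\xv^{(1)}$ need not have $\xv$-components equal to $\bar\xv$, so I cannot naively invoke~\eqref{implied} at $\bar\xv$; instead I must apply it pointwise at each binary extreme point and then aggregate, which is what forces the use of affineness (or at least concavity) of $F^{(2)}$ so that the aggregation of the $F^{(2)}$ terms goes the right way. A secondary subtlety is justifying that $\LAG$ and $\LAG^{ex}$ genuinely equal the stated convexified-primal values — this requires the MIP feasible regions $\mathcal{X}^{(i)}$ (and $\mathcal{X}^{(i)}_{ex}$) to have the usual properties (rational data, so $\conv(\mathcal{X}^{(i)})$ is a polyhedron and the Lagrangian dual attains the convexification bound), which holds here by the rationality assumptions on the data; I would cite the geometric-duality framework of~\cite{lemarechal2001geometric} and note that the role of the present proposition is to strengthen it to the two-block separable setting. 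I would then remark that by symmetry the same argument works if instead $F^{(1)}$ is affine, completing the proof; full details are deferred to Appendix~\ref{sec:redundant_constraint}.
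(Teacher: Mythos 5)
Your proposal is correct and follows essentially the same route as the paper's proof: both invoke the Geoffrion/Lemar\'echal primal characterization, decompose the convexified point of the block whose $F$ is \emph{not} affine into binary extreme points, apply the implication (\ref{implied}) pointwise at those extreme points, and use affineness of the other block's function to pull the convex combination inside and conclude $w^{(1)}+w^{(2)}\geq 0$. The only cosmetic difference is framing (you lift a feasible point of the $\LAG$-convexification and combine with the easy inequality $\LAG^{ex}\geq\LAG$, whereas the paper shows the dualized constraint is automatically satisfied on the convexified set $\mathcal{N}$), and you correctly identify the key subtlety — that (\ref{implied}) must be applied at the extreme points rather than at $\bar\xv$ — which is exactly where the paper uses affineness as well.
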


Proposition~\ref{prop:redundant_constraint} motivates us to consider the following two different choices of redundant non-affine constraints: 
\begin{enumerate}
    \item \textit{Monomial-based  reformulation}: Let $\mathcal{S} \subseteq 2^{[n]}$ be a collection of subsets of $[n]$, then the primal-redundant constraints take the form of
    \begin{equation}
        \begin{aligned}\label{eq:M-cuts}
           \prod_{j \in S} x_j^{(1)} = \prod_{j \in S} x_j^{(2)},\forall S \in \mathcal{S}.
        \end{aligned}
    \end{equation}
    \item \emph{Vertex-based  reformulation}: The primal-redundant constraints take the form
    \begin{equation}
        \begin{aligned} \label{eq:V-cuts}
           \prod_{j \in [n]} \sigma_{v_j}(x_j^{(1)}) = \prod_{j \in [n]} \sigma_{v_j}(x_j^{(2)}),\text{ for each vertex } \vv \text{ of } [0,1]^{n},
        \end{aligned}
    \end{equation}
    where $\sigma_{v_j}(u) := \begin{cases}
        u & \text{ if } v_j = 1 \\
        1-u & \text{ if } v_j = 0
    \end{cases}$.
\end{enumerate}
We refer to the Lagrangian dual of monomial-based reformulation and vertex-based reformulation as \textit{M-Lagrangian dual} and \textit{V-Lagrangian dual} respectively.
\begin{remark}{Difference between traditional RLT-type constraints and (\ref{eq:M-cuts})}: For the constraints of the form $x^{(1)}_j = x^{(2)}_j$ for say $j \in \{1,2\}$, traditional RLT would involve multiplying these constraints, that is, adding new constraints of the form $\left(x^{(1)}_1 - x^{(2)}_1\right)\left(x^{(1)}_2 - x^{(2)}_2\right)=0$ in the reformulation step of RLT.  Such a constraint would not be separable after dualizing. Constraints of the form of (\ref{eq:M-cuts}) are more similar to those used in the reformulation in~\cite{burer2009copositive,cifuentes2024sensitivity}. Also see~\cite{minoux2010drl}.    
\end{remark}

We note here that the monomial-based reformulation and the vertex-based reformulation are quite different. 
As we will see in Section~\ref{sec:M2}, one has to add (\ref{eq:M-cuts}) corresponding to all monomials to obtain zero duality gap in general. In practice, in Section~\ref{sec:compute}, we will see significant dual bound improvement over the classical Lagrangian dual with just the quadratic monomials. 

On the other hand, V-Lagrangian dual is more like a ``column-generation" approach. For strong duality to be attained, we need to add (\ref{eq:V-cuts}) for all vertices of $[0, 1]^n$ (Section~\ref{sec:v-converge}). We will show that the inner problem (\ref{ex_dual_prob_inner}) in this case can be emulated in original space $\mathcal{X}^{(i)}$ using ``no-good cuts"~\cite{angulo2016improving}. The number of extra cuts is initially zero and increases with each iteration,  until (\ref{ex_dual_prob}) is solved. The resulting method is similar to the algorithm described in \cite{ahmed2013scenario}, where adding weights to vertices is replaced by cutting off the vertices. We will show in Section~\ref{sec:v-converge} that sub-gradient method for V-Lagrangian dual has a convergence rate that is at least as good as that of~\cite{ahmed2013scenario}.


\subsection{M-Lagrangian dual for two-block problems.}\label{sec:M2}

Fix any $\mathcal{S} \subseteq 2^{[n]}$, we let
\begin{align*}
        \mathcal{X}^{(i)}_{M}(\mathcal{S}) := \left\{(\xv^{(i)},\yv^{(i)},\wv^{(i)}) : (\xv^{(i)},\yv^{(i)}) \in \mathcal{X}^{(i)}, w^{(i)}_S = \prod_{j \in S} x_j^{(i)},\forall S \in \mathcal{S}\right\}.
    \end{align*}
We assume that $\{j\} \in \mathcal{S},\forall j \in [n]$. 
In this case, (\ref{ex_main_prob_link}) is included within (\ref{ex_main_prob_link_extra}) and so the monomial reformulation 
takes the form:
\begin{subequations}
\label{ex_main_prob_monomial}
\begin{align}
\OPT =  \min_{(\xv,\yv,\wv)} \ & \sum_{i \in \{1,2\}}  \iprod{\cv^{(i)}}{\xv^{(i)}} + \iprod{\dv^{(i)}}{\yv^{(i)}} \\
\text{s.t.}\ & (\xv^{(i)},\yv^{(i)},\wv^{(i)}) \in \mathcal{X}^{(i)}_{M}(\mathcal{S}),\forall i \in \{1,2\}, \\
\label{ex_main_prob_monomial_link_cons}
& \ \wv^{(1)}_S = \wv^{(2)}_S,\forall S \in \mathcal{S}.
\end{align}
\end{subequations}
If we apply Lagrangian decomposition to (\ref{ex_main_prob_monomial}) by dualizing (\ref{ex_main_prob_monomial_link_cons}), then by Geoffrion's Theorem (also called primal characterization~\cite{boland2018combining}) the optimal value of the M-Lagrangian dual is equal to: 
\begin{equation}
\label{ex_main_prob_monomial_primal}
\begin{aligned}
\LAG^{M} =  \min_{(\xv,\yv,\wv)} \ & \sum_{i \in \{1,2\}}  \iprod{\cv^{(i)}}{\xv^{(i)}} + \iprod{\dv^{(i)}}{\yv^{(i)}} \\
\text{s.t.}\ & (\xv^{(i)},\yv^{(i)},\wv^{(i)}) \in \conv\{\mathcal{X}^{(i)}_{M}(\mathcal{S})\},\forall i \in \{1,2\}, \\
& \ \wv^{(1)}_S = \wv^{(2)}_S,\forall S \in \mathcal{S}.
\end{aligned}
\end{equation}
Throughout the section, we make the following assumption:
\begin{assumption}
    \label{asp:down_close_set}
     $\mathcal{S}$ is down-closed. Namely, if $S \in \mathcal{S}$, then $S' \in \mathcal{S}$ for all $S' \subseteq S$.
\end{assumption}

As the objective function of (\ref{ex_main_prob_monomial_primal}) only involves $(\xv,\yv)$, we are interested in the projection of the feasible region of (\ref{ex_main_prob_monomial_primal}) onto $(\xv,\yv)$:
\begin{equation}
    \begin{aligned}
        \mathcal{A}(\mathcal{S}) := \left\{ (\xv,\yv) \;\middle\vert\;
   \begin{array}{@{}l@{}} \exists \wv,
 (\xv^{(i)},\yv^{(i)},\wv^{(i)}) \in \conv\{\mathcal{X}^{(i)}_{M}(\mathcal{S})\},\forall i \in \{1,2\}, \\ 
\wv^{(1)}_S = \wv^{(2)}_S,\forall S \in \mathcal{S}
   \end{array} 
\right\}.
    \end{aligned}
\end{equation}


We further define
\begin{equation}
    \begin{aligned}
        \mathcal{B}(\mathcal{S}) := \bigcap_{\mathcal{U} \in  \mathcal{S}}  \conv \left\{ (\xv,\yv) \;\middle\vert\;
   \begin{array}{@{}l@{}} 
 (\xv^{(i)},\yv^{(i)}) \in \mathcal{X}^{(i)},\forall i \in \{1,2\}, \\
x_j^{(1)} = x_j^{(2)},\forall j \in \mathcal{U}
   \end{array}
\right\}.
\end{aligned}
\end{equation}

\begin{theorem}
    \label{thm:A_subset_B}
    Under Assumption \ref{asp:down_close_set} on $\mathcal{S}$, we have $\mathcal{A}(\CS) \subseteq \mathcal{B}(\CS)$. As a consequence, when $\mathcal{S} = 2^{[n]}$, strong duality holds between (\ref{ex_main_prob_monomial}) and (\ref{ex_main_prob_monomial_primal}).
\end{theorem}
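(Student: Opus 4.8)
The plan is to prove the containment $\mathcal{A}(\mathcal{S}) \subseteq \mathcal{B}(\mathcal{S})$ by a fixed pair $(\xv,\yv) \in \mathcal{A}(\mathcal{S})$ and showing it lies in each of the sets $\conv\{(\xv^{(i)},\yv^{(i)})_{i} : (\xv^{(i)},\yv^{(i)}) \in \mathcal{X}^{(i)}, x_j^{(1)}=x_j^{(2)} \ \forall j \in \mathcal{U}\}$ for every $\mathcal{U} \in \mathcal{S}$. So fix $\mathcal{U} \in \mathcal{S}$. Since $(\xv,\yv) \in \mathcal{A}(\mathcal{S})$, there is a $\wv$ with $(\xv^{(i)},\yv^{(i)},\wv^{(i)}) \in \conv\{\mathcal{X}^{(i)}_M(\mathcal{S})\}$ for $i=1,2$ and $\wv^{(1)}_S = \wv^{(2)}_S$ for all $S \in \mathcal{S}$, in particular $\wv^{(1)}_\mathcal{U} = \wv^{(2)}_\mathcal{U}$ and (using $\{j\}\in\mathcal{S}$ by Assumption~\ref{asp:down_close_set}, which is implied by down-closedness together with $\mathcal{S}$ being nonempty on singletons) $x_j^{(1)} = x_j^{(2)}$ for $j \in \mathcal{U}$. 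Write each $(\xv^{(i)},\yv^{(i)},\wv^{(i)})$ as a convex combination $\sum_{k} \theta^{(i)}_k (\xv^{(i),k},\yv^{(i),k},\wv^{(i),k})$ of points of $\mathcal{X}^{(i)}_M(\mathcal{S})$; at each such extreme point the $x$-coordinates are $0/1$ and $w^{(i),k}_S = \prod_{j\in S} x^{(i),k}_j$, so in particular $w^{(i),k}_\mathcal{U} = \prod_{j \in \mathcal{U}} x^{(i),k}_j = \mathbbm{1}[x^{(i),k}_j = 1 \ \forall j \in \mathcal{U}]$ is an indicator of the event ``all coordinates in $\mathcal{U}$ are one.''

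The key step is to turn the two scalar equalities $w^{(1)}_\mathcal{U} = w^{(2)}_\mathcal{U}$ and $x^{(1)}_j = x^{(2)}_j$ ($j\in\mathcal{U}$) into a joint re-coupling of the two mixtures. I would group the support points of mixture $i$ by the pattern $p \in \{0,1\}^{\mathcal{U}}$ that the coordinates $(x^{(i),k}_j)_{j\in\mathcal{U}}$ realize. This partitions the mass $\theta^{(i)}$ into buckets $\theta^{(i)}_p := \sum_{k : \text{pattern} = p} \theta^{(i)}_k$ with $\sum_p \theta^{(i)}_p = 1$. Because the marginals $x^{(i)}_j = \sum_p \theta^{(i)}_p p_j$ agree for $i=1,2$ on all $j\in\mathcal{U}$, and — crucially — because $w^{(i)}_\mathcal{U} = \theta^{(i)}_{\mathbf{1}}$ (the mass on the all-ones pattern) agrees as well, one checks that $\theta^{(1)}_p = \theta^{(2)}_p$ for every pattern $p$. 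Indeed, downclosedness gives us equality of $\wv^{(1)}_{\mathcal{U}'} = \wv^{(2)}_{\mathcal{U}'}$ for all $\mathcal{U}'\subseteq\mathcal{U}$, i.e. equality of all "upper" probabilities $\Pr[p_j = 1 \ \forall j\in\mathcal{U}']$; by inclusion–exclusion over subsets of $\mathcal{U}$ these determine the full distribution of $p$, hence $\theta^{(1)}_p = \theta^{(2)}_p =: \theta_p$ for all $p$.

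Once the bucket masses match, for each pattern $p$ with $\theta_p>0$ take the conditional barycenters $(\bar\xv^{(i),p}, \bar\yv^{(i),p}) := \tfrac{1}{\theta_p}\sum_{k:\text{pattern}=p}\theta^{(i)}_k(\xv^{(i),k},\yv^{(i),k}) \in \conv(\mathcal{X}^{(i)})$; these have $\bar x^{(i),p}_j = p_j$ for $j\in\mathcal{U}$, so the pair $((\bar\xv^{(1),p},\bar\yv^{(1),p}),(\bar\xv^{(2),p},\bar\yv^{(2),p}))$ satisfies the coupling constraint $x^{(1)}_j = x^{(2)}_j$ on $\mathcal{U}$ and hence lies in the convex set appearing in the definition of $\mathcal{B}(\mathcal{S})$ for this $\mathcal{U}$. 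Averaging over $p$ with weights $\theta_p$ recovers exactly $((\xv^{(1)},\yv^{(1)}),(\xv^{(2)},\yv^{(2)}))$, which therefore lies in that convex set; as $\mathcal{U}\in\mathcal{S}$ was arbitrary, $(\xv,\yv)\in\mathcal{B}(\mathcal{S})$.

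For the ``consequence'': when $\mathcal{S}=2^{[n]}$, taking $\mathcal{U}=[n]$ shows $\mathcal{A}([n]) \subseteq \conv\{((\xv^{(1)},\yv^{(1)}),(\xv^{(2)},\yv^{(2)})) : (\xv^{(i)},\yv^{(i)})\in\mathcal{X}^{(i)}, \xv^{(1)}=\xv^{(2)}\}$; projecting the feasible region of the original problem~(\ref{prob}) into $(\xv,\yv)$-space gives the integer set whose convex hull is exactly this set, and the reverse inclusion $\mathcal{B}\subseteq\mathcal{A}$ (resp. the trivial inclusion of the integer feasible set in $\mathcal{A}([n])$) together with linearity of the objective yields $\LAG^M = \OPT$, i.e. zero duality gap between (\ref{ex_main_prob_monomial}) and (\ref{ex_main_prob_monomial_primal}).

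\textbf{Main obstacle.} The delicate point is the matching of bucket masses $\theta^{(1)}_p = \theta^{(2)}_p$: a single equality $w^{(1)}_\mathcal{U}=w^{(2)}_\mathcal{U}$ only matches the all-ones mass, so one really must invoke Assumption~\ref{asp:down_close_set} to get $w^{(1)}_{\mathcal{U}'}=w^{(2)}_{\mathcal{U}'}$ for all $\mathcal{U}'\subseteq\mathcal{U}$ and then run the Möbius/inclusion–exclusion inversion on the Boolean lattice of $\mathcal{U}$ to pin down each $\theta_p$. A secondary subtlety is that the $w$-coordinates of the conditional barycenters need not be consistent with the monomial definition (they are only convex combinations), but this does not matter since $\mathcal{B}(\mathcal{S})$ is stated purely in $(\xv,\yv)$-space; one only needs that the $\xv$-barycenter coordinates on $\mathcal{U}$ are exactly the pattern $p$, which holds because every support point in bucket $p$ has those coordinates equal to $p$.
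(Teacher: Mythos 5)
Your proposal is correct and follows essentially the same route as the paper: the paper establishes Theorem~\ref{thm:A_subset_B} as the single-edge case of Theorem~\ref{thm:strong_duality_tree}, whose key ingredients are the affine independence of the monomial vectors $\{(\prod_{j\in S}x_j)_{S\subseteq\mathcal{U}} : x\in\{0,1\}^{\mathcal{U}}\}$ (Proposition~\ref{prop:affine_indpt}, which is exactly your M\"obius-inversion claim that the values $w_{\mathcal{U}'}$ for $\mathcal{U}'\subseteq\mathcal{U}$ pin down the pattern distribution), the resulting re-coupling of the two mixtures (Proposition~\ref{prop:decomposibility}, your conditional-barycenter step), and the inclusion $\proj\bigcap\subseteq\bigcap\proj$ over the choices of $\mathcal{U}$. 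Your only cosmetic deviations are proving the uniqueness of the bucket masses by inclusion--exclusion rather than by the paper's maximal-set argument, and working directly in the two-block setting rather than inducting on a tree.
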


Details of the proof are presented in the Appendix \ref{sec:M-Lagrangian for multi-block}.


\subsection{Bounds for two-block packing and covering problem.}\label{sec:M2PC}

In this subsection, we consider the $M$-Lagrangian dual when $\mathcal{S} =  \binom{[n]}{\leq k}$ for some fixed number $k \in [n]$. We refer to the corresponding dual optimal value by $\LAG^{M}_k$.
The goal of this subsection is to present a multiplicative bound on $\LAG^{M}_k$ for packing and covering instances. 
Since we use a ``min" objective and ``$\leq$" constraints,  (\ref{prob}) is called \textit{packing} (resp. \emph{covering}) if each $\cv^{(i)}$, $\dv^{(i)}$ are non-positive (resp. non-negative) and $A^{(i)}$, $B^{(i)}$, $\bv^{(i)}$ are entry-wise non-negative (resp. non-positive). Therefore, in our notation, $\OPT$ and $\LAG^M_k$ are non-positive (resp. non-negative) for packing (resp. covering) problems.


For packing problems, we require the following assumption for our analysis, which is commonly made in the stochastic programming literature (see, for example~\cite{chen2022sample}~\cite{higle2005stochastic}~\cite{rockafellar1976stochastic}).

\begin{assumption}
  \label{asp_recourse}
  (Relatively complete recourse)
  For any $(\xv^{(1)},\yv^{(1)}) \in \mathcal{X}^{(1)}$, there exists some $\yv^{(2)}$ that $(\xv^{(1)},\yv^{(2)}) \in \mathcal{X}^{(2)}$. Symmetrically, for any $(\xv^{(2)},\yv^{(2)}) \in \mathcal{X}^{(2)}$, there exists some $\yv^{(1)}$ that $(\xv^{(2)},\yv^{(1)}) \in \mathcal{X}^{(1)}$.
\end{assumption}

Note that if this assumption is violated, then one can simply add $A^{(2)} \xv^{(1)} \leq \bv^{(2)}$ and $A^{(1)} \xv^{(2)} \leq \bv^{(1)}$ into $\CX^{(1)}$ and $\CX^{(2)}$ respectively and it is straightforward to see that this assumption is then satisfied for packing problems. 




\begin{theorem}
\label{thm_two_block_packing_bound} For any packing instance (\ref{prob}), under Assumption \ref{asp_recourse}, we have that $\left(2+\dfrac{1}{t-2}\right) \cdot \OPT \leq \LAG_k^{M} \leq \OPT$, where $t = k/n.$
\end{theorem}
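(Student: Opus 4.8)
The inequality $\LAG_k^{M}\le\OPT$ is just weak duality (adding redundant constraints and dualizing them cannot raise the dual value), so the work is the lower bound $\LAG_k^{M}\ge\bigl(2+\tfrac1{t-2}\bigr)\OPT$. By the primal characterization (\ref{ex_main_prob_monomial_primal}) (equivalently, after passing to $\mathcal{B}\bigl(\binom{[n]}{\leq k}\bigr)$ via Theorem~\ref{thm:A_subset_B}), $\LAG_k^{M}$ is the minimum of the objective over the projected feasible set, and I would start by recording what a packing instance buys us. Since singletons lie in $\binom{[n]}{\leq k}$, any feasible point has a single $\xv$-part $\bar\xv$; by relatively complete recourse (Assumption~\ref{asp_recourse}) both blocks share the same set $\mathcal{F}\subseteq\{0,1\}^n$ of feasible $\xv$'s, which for packing is down-closed; the optimal-recourse value $g^{(i)}(\xv):=\min\{\iprod{\dv^{(i)}}{\yv}:(\xv,\yv)\in\mathcal{X}^{(i)}\}$ is non-positive and non-decreasing in $\xv$, hence lies in $[g^{(i)}(\mathbf 0),0]$; and, as $(\mathbf 0,\cdot,\cdot)$ is feasible for (\ref{prob}), $g^{(1)}(\mathbf 0)+g^{(2)}(\mathbf 0)\ge\OPT$.

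Next I would turn a minimizer into a probabilistic object. Writing the convex-hull memberships as mixtures over points of $\mathcal{X}^{(i)}_M$, the linking constraints $\wv^{(1)}_S=\wv^{(2)}_S$ for $|S|\le k$, together with inclusion--exclusion, say exactly that the two induced distributions $\mathcal{D}_1,\mathcal{D}_2$ on $\mathcal{F}$ (one per block) have the same marginal on every set of at most $k$ coordinates; both have mean $\bar\xv$, and taking optimal recourse we may write $\LAG_k^{M}=\iprod{\cv^{(1)}+\cv^{(2)}}{\bar\xv}+\E_{\mathcal{D}_1}[g^{(1)}]+\E_{\mathcal{D}_2}[g^{(2)}]$. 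Because the marginals agree on every $k$-set, for a uniformly random $k$-set $\mathcal{U}$ one can couple draws $\xv^{(1)}\sim\mathcal{D}_1$ and $\xv^{(2)}\sim\mathcal{D}_2$ so that they agree on $\mathcal{U}$ (and are conditionally independent off $\mathcal{U}$).

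Now comes the construction. Given $\xv^{(1)}\sim\mathcal{D}_1$, the triple $\bigl(\xv^{(1)},\,\yv^{(1)}(\xv^{(1)}),\,\yv^{(2)}(\xv^{(1)})\bigr)$ with optimal recourse in each block (the block-$2$ recourse existing by Assumption~\ref{asp_recourse}) is feasible for (\ref{prob}) and has value $\iprod{\cv^{(1)}+\cv^{(2)}}{\xv^{(1)}}+g^{(1)}(\xv^{(1)})+g^{(2)}(\xv^{(1)})$, whose $\mathcal{D}_1$-expectation differs from $\LAG_k^{M}$ only in the cross term $\E_{\mathcal{D}_1}[g^{(2)}]$ versus $\E_{\mathcal{D}_2}[g^{(2)}]$. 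Averaging with the symmetric construction based on a $\mathcal{D}_2$-draw makes the discrepancy $\tfrac12(\E_{\mathcal{D}_2}-\E_{\mathcal{D}_1})[g^{(1)}-g^{(2)}]$, which, since each $g^{(i)}$ lies in $[g^{(i)}(\mathbf 0),0]$, is at most $\tfrac12\lvert g^{(1)}(\mathbf 0)+g^{(2)}(\mathbf 0)\rvert\le\tfrac12\lvert\OPT\rvert$ --- already giving $\LAG_k^{M}\ge\tfrac32\OPT$. To improve this to $2+\tfrac1{t-2}$ I would use the full $\le k$-wise agreement: conditioning the draws on their shared values on the random $k$-set $\mathcal{U}$ leaves a residual instance on $n-\lvert\mathcal{U}\rvert$ coordinates whose two conditional distributions still agree up to a reduced order, so the discrepancy can be re-expanded against that sub-instance, and iterating (telescoping the gains from the pinned coordinates) collapses the bookkeeping to: the expected value of the constructed solution is at most $\LAG_k^{M}+\bigl(1+\tfrac1{t-2}\bigr)\lvert\OPT\rvert$. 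Since every constructed solution is feasible, $\OPT$ is at most this quantity, and using $\OPT\le0$ this rearranges precisely to $\bigl(2+\tfrac1{t-2}\bigr)\OPT\le\LAG_k^{M}$.

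The main obstacle, I expect, is exactly this last error accounting: extracting the constant $2+\tfrac1{t-2}$ rather than a weaker bound. Charging the cross-block error to $\lvert\OPT\rvert$ costs a factor $2$ immediately; getting down to $\tfrac32$ already needs the symmetrization and the equality of means, and interpolating correctly in $k$ requires genuinely exploiting the $\le k$-wise agreement through a recursive split of the coordinate set so that the residual error is measured against a sub-instance rather than all of $\lvert\OPT\rvert$. Two routine points also need care: the whole argument should use only monotonicity of $g^{(i)}$ (so that genuine integer recourse variables $\yv$ cause no trouble for packing), and the degenerate cases $\OPT=0$ and $\bar\xv$ with zero coordinates should be dispatched separately.
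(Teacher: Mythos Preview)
Your setup is sound and the $3/2$ bound is correct, but the step from $3/2$ to $2+\tfrac{1}{t-2}$ is a genuine gap, not just a matter of bookkeeping. The symmetrization argument uses only that $\mathcal{D}_1$ and $\mathcal{D}_2$ have the same mean (i.e., the $k=1$ information), and the coupling-on-a-random-$k$-set idea does not obviously buy anything: the functions $g^{(i)}$ depend on all $n$ coordinates, so conditioning on $k$ of them does not reduce the range of $g^{(1)}-g^{(2)}$ on the residual, and there is no well-defined ``residual packing instance on $n-k$ coordinates'' to recurse on (the constraints $A^{(i)}\xv+B^{(i)}\yv\le\bv^{(i)}$ do not restrict to a subset of coordinates). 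Without a concrete mechanism, the telescoping claim is just a hope; indeed, note that $(3-2t)/(2-t)$ is not what a geometric recursion of depth $1/t$ would produce.

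The paper's route is quite different. It does not try to control $\E_{\mathcal{D}_1}[g^{(2)}]-\E_{\mathcal{D}_2}[g^{(2)}]$ at all. Instead it works over the outer set $\mathcal{B}_k$ (via Theorem~\ref{thm:A_subset_B}) and proves a pointwise ``rounding'' lemma: for every \emph{good} variable set $\mathcal{W}$ and every $\xv\in\mathcal{B}_k$, the $\mathcal{W}$-restricted objective $\sum_i\iprod{\cv^{(i)}}{\chi^{(i)}_{\mathcal{W}}\circ\xv^{(i)}}$ is at least $\OPT$. The packing structure enters here in an essential way you did not use: one zeros out the $\xv$-coordinates outside $\mathcal{W}$ to regain feasibility. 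In the two-block case the two relevant families of good sets are $\mathcal{F}_i=\{\xv\}\cup\{\yv^{(i)}\}$ (all of $\xv$ but only one block's recourse; feasibility comes from Assumption~\ref{asp_recourse}) and $\mathcal{W}_S=\{\xv_S\}\cup\{\yv^{(1)}\}\cup\{\yv^{(2)}\}$ for $|S|\le k$ (only a $k$-subset of $\xv$ but all recourse; here one uses that $\mathcal{B}_k$ contains the convex hull with coupling enforced on $S$). The multiplicative constant then falls out of a fractional covering LP: minimize $\sum_{\mathcal{W}}\alpha_{\mathcal{W}}$ subject to $\sum_{\mathcal{W}}\alpha_{\mathcal{W}}\chi_{\mathcal{W}}\ge\mathbf{1}$, $\alpha\ge 0$, over good $\mathcal{W}$, whose optimum (after symmetrization) solves to exactly $(3-2t)/(2-t)=2+\tfrac{1}{t-2}$. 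The missing idea in your approach is precisely this decomposition of the objective into a fractional cover by good pieces, together with the zero-out construction that makes each piece individually comparable to $\OPT$.
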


\begin{theorem}
\label{thm_two_block_covering_bound}
For any covering instance (\ref{prob}), we have $\left(\dfrac{1}{2-t}\right)\cdot \OPT \leq \LAG_k^{M} \leq \OPT$ where $t = k/n$.
\end{theorem}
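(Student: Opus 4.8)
The plan is to establish the two inequalities separately. The upper bound $\LAG_k^{M} \leq \OPT$ is immediate from weak duality: adding redundant constraints and then dualizing can only weaken the problem, so the dual value never exceeds $\OPT$; since for covering instances $\OPT \geq 0$, this is simply the standard weak duality statement. The content is in the lower bound $\frac{1}{2-t}\cdot\OPT \leq \LAG_k^{M}$. My approach is to work with the primal characterization~\eqref{ex_main_prob_monomial_primal}, i.e.\ to use that $\LAG_k^{M}$ equals the minimum over $\mathcal{A}(\mathcal{S})$ with $\mathcal{S}=\binom{[n]}{\leq k}$, and to exhibit, from any point $(\xv,\yv)\in\mathcal{A}(\mathcal{S})$ achieving value $\LAG_k^{M}$, a scaling that produces a genuinely feasible point of~\eqref{prob} (or of a suitable convexification with the same objective as $\OPT$) whose objective value is at most $(2-t)\cdot\LAG_k^{M}$.

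The key steps, in order, would be: (1) Take $(\bar\xv,\bar\yv)\in\mathcal{A}(\mathcal{S})$ optimal, so there exist $\wv^{(i)}$ with $(\bar\xv^{(i)},\bar\yv^{(i)},\wv^{(i)})\in\conv\{\mathcal{X}^{(i)}_M(\mathcal{S})\}$ and the $\wv$-coupling holds. Write each as a convex combination of points of $\mathcal{X}^{(i)}_M(\mathcal{S})$; the $\wv_S$ coordinates record the (averaged) products $\prod_{j\in S}x_j^{(i)}$ over the atoms. (2) Use the coupling of the degree-$\leq k$ moment vectors together with the covering sign structure ($A^{(i)},\bv^{(i)}$ nonpositive, $\cv^{(i)},\dv^{(i)}$ nonnegative) to argue that although $\bar\xv^{(1)},\bar\xv^{(2)}$ need not be equal, a common scaled vector $\xv^\star := \alpha\,\bar\xv^{(1)}$ (symmetrically for block 2) with an appropriate $\alpha\le 2-t$ can be made simultaneously feasible for both blocks. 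The role of $t=k/n$ here is that controlling all moments up to degree $k$ out of $n$ variables limits how far the two marginals can spread apart; a counting / averaging argument over which of the $n$ coordinates are "covered" by the size-$\leq k$ subsets should yield the factor $\frac{1}{2-t}$ (equivalently, loss factor $2-t$), with the extreme $t=1$ giving loss $1$, matching Theorem~\ref{thm:A_subset_B}, and $t\to 0$ giving loss $2$. (3) Since covering constraints and the nonnegative objective are monotone under upward scaling of $\xv$ combined with a correspondingly scaled recourse $\yv$ (covering has the analogue of relatively complete recourse built in via nonpositive $A,\bv$), conclude that the scaled point is feasible for~\eqref{prob} with objective at most $(2-t)\LAG_k^{M}$, hence $\OPT \le (2-t)\LAG_k^{M}$, which rearranges to the claim.

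The main obstacle I anticipate is Step~(2): making precise the sense in which coupling the degree-$\leq k$ moments of $\bar\xv^{(1)}$ and $\bar\xv^{(2)}$ forces the existence of a single scaled $\xv^\star$ feasible for both blocks, and pinning the scaling constant to exactly $2-t$ rather than a weaker bound. This likely requires a careful probabilistic reading of the $\conv\{\mathcal{X}^{(i)}_M(\mathcal{S})\}$ description — interpreting the convex combination as a distribution whose low-order marginals are shared — combined with a union-bound / inclusion–exclusion estimate across the $\binom{[n]}{\le k}$ subsets; the covering sign conditions are what let one-sided (upward) perturbations preserve feasibility, so the argument should not need a two-sided rounding. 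I would also double-check the boundary behavior and whether the bound is tight via a small explicit family of instances (e.g.\ a single covering inequality on $n$ binary variables split across two identical blocks), which simultaneously validates the constant $\frac{1}{2-t}$ and guides the estimate in Step~(2).
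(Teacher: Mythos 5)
Your overall frame is the right one --- pass to the primal characterization, use $\mathcal{A}(\mathcal{S})\subseteq\mathcal{B}(\mathcal{S})$ from Theorem~\ref{thm:A_subset_B}, and exploit that covering feasibility is preserved when coordinates increase --- and your upper bound via weak duality is fine (as is your observation that no recourse assumption is needed for covering). But the central construction in your Steps (2)--(3) has a genuine gap. You propose to obtain a feasible point of (\ref{prob}) by \emph{scaling}, $\xv^\star=\alpha\,\bar\xv^{(1)}$ with $\alpha\le 2-t$. This cannot work as stated: the optimal $(\bar\xv,\bar\yv)$ over $\mathcal{A}(\mathcal{S})$ is fractional, and the feasible region of (\ref{prob}) consists of mixed-\emph{binary} points, which is not closed under scalar multiplication; a scaled fractional point is neither feasible for (\ref{prob}) nor, in general, a convex combination of feasible points, so you have no way to conclude that its objective value is at least $\OPT$. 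The paper's proof instead works atom by atom: since $\bar\xv\in\mathcal{B}(\mathcal{S})$, for any fixed $\mathcal{U}\subseteq[n]$ with $|\mathcal{U}|\le k$ one may write $(\bar\xv,\bar\yv)=\sum_l\lambda_l(\xv_l,\yv_l)$ where each atom has both blocks feasible and \emph{agreeing on $\mathcal{U}$}; one then defines $\overline{\xv}_l$ to equal the common value on $\mathcal{U}$ and the coordinatewise \emph{maximum} of the two blocks off $\mathcal{U}$. Each such point is integral and feasible for (\ref{prob}), so $\sum_l\lambda_l\bigl(\iprod{\cv}{\overline{\xv}_l}+\iprod{\dv}{\yv_l}\bigr)\ge\OPT$, while $\max\{a,b\}\le a+b$ for nonnegative $a,b$ shows the objective is inflated by a factor of at most $2$ off $\mathcal{U}$ and not at all on $\mathcal{U}$.

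The second missing ingredient is where $t=k/n$ actually enters; you gesture at ``a counting/averaging argument'' but do not supply the mechanism. The precise step is to average the resulting inequality $\OPT\le 2\,\LAG_k^{M}-\iprod{\cv}{\chi_{\mathcal{U}}\circ\bar\xv}$ over a fractional cover of $[n]$ by sets $\mathcal{U}$ of size at most $k$ with total weight $n/k=1/t$ (the LP (\ref{multi_block_covering_bound}), with $\theta_k=1/t$ and $\tau=2$ in the two-block case). Since $\cv\ge 0$ and $\bar\xv\ge 0$, this guarantees some $\mathcal{U}^*$ with $\iprod{\cv}{\chi_{\mathcal{U}^*}\circ\bar\xv}\ge t\cdot\LAG_k^{M}$, whence $\OPT\le(2-t)\LAG_k^{M}$. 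Without this selection step the constant $2-t$ does not materialize, and no union bound or inclusion--exclusion over $\binom{[n]}{\le k}$ is needed --- a single well-chosen $\mathcal{U}^*$ suffices.
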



The proof presented in Appendix \ref{sec:M-Lagrangian for multi-block} of the above results uses techniques similar to those presented in~\cite{dey2018analysis}. 

\subsection{V-Lagrangian dual for two-block problem.}\label{sec:v-converge}

\begin{align*}
     \textup{Let }\mathcal{X}^{(i)}_{V} := \left\{ (\xv^{(i)},\yv^{(i)},\wv^{(i)}) \;\middle\vert\;
   \begin{array}{@{}l@{}}
 (\xv^{(i)},\yv^{(i)},\wv^{(i)}) \in \mathcal{X}^{(i)}, \\ 
w^{(i)}_{\vv} = \prod_{s \in [n]} \sigma_{v_s}((\xv^{(i)})_s),\text{$\forall \vv$ vertex of $[0,1]^n$} \}
   \end{array} 
\right\}.
\end{align*}
By replacing $\mathcal{X}^{(i)}_{V}$ in place of $\mathcal{X}^{(i)}_{ex}$ in (\ref{ex_main_prob}) and  dualizing, we obtain:


\begin{equation}
    \begin{aligned}
        \label{ex_dual_prob_vertex_inner}
        L^{V}(\lambda,\mu) := \min_{(\xv,\yv,\wv)} & \left(\sum_{i \in \{1,2\}}  \iprod{\cv^{(i)}}{\xv^{(i)}} + \iprod{\dv^{(i)}}{\yv^{(i)}}\right) \\ & +   \iprod{\lambda}{\xv^{(1)} - \xv^{(2)}}  + \iprod{\mu}{\wv^{(1)} - \wv^{(2)}}\\
        \ & \text{s.t.}\ (\xv^{(i)},\yv^{(i)},\wv^{(i)}) \in \mathcal{X}^{(i)}_{V},\forall i \in \{1,2\},
    \end{aligned}
\end{equation}
and
\begin{equation}
    \begin{aligned}
        \label{ex_dual_prob_vertex}
        \LAG^{V} := \max_{\lambda,\mu}  L^{V}(\lambda,\mu).
    \end{aligned}
\end{equation}
\begin{theorem}
    \label{thm_strong_dual_vertex} Strong duality holds for V-Lagrangian dual, that is 
    $\OPT = \LAG^V$.
\end{theorem}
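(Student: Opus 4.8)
\textbf{Proof plan for Theorem~\ref{thm_strong_dual_vertex}.}

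The plan is to reduce the V-Lagrangian dual to the already-established M-Lagrangian machinery (Theorem~\ref{thm:A_subset_B}), rather than re-deriving strong duality from scratch. The key observation is that the vertex monomials $\prod_{s\in[n]}\sigma_{v_s}(x_s)$ and the full collection of monomials $\{\prod_{j\in S}x_j : S\subseteq[n]\}$ span the same space of functions on $\{0,1\}^n$: each $\sigma_{v_j}(x_j)\in\{x_j,\,1-x_j\}$, so expanding $\prod_{s}\sigma_{v_s}(x_s)$ yields a polynomial that is a $\pm1$ combination of squarefree monomials, and conversely (by Möbius inversion / inclusion–exclusion on the Boolean lattice) every squarefree monomial $\prod_{j\in S}x_j$ is an integer combination of the vertex indicator polynomials. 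Hence the affine map recording all vertex-monomial values $\wv^{V}$ and the affine map recording all monomial values $\wv^{M}$ (with $\mathcal{S}=2^{[n]}$) are related by a fixed invertible affine change of coordinates that does not involve $\yv$.

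First I would make this change-of-coordinates statement precise: there is an invertible affine map $\Phi$ with $\wv^{M}=\Phi(\wv^{V})$ and $\wv^{V}=\Phi^{-1}(\wv^{M})$, and this map sends $\mathcal{X}^{(i)}_V$ bijectively onto $\mathcal{X}^{(i)}_M(2^{[n]})$ for each $i\in\{1,2\}$, hence (being affine) sends $\conv\{\mathcal{X}^{(i)}_V\}$ onto $\conv\{\mathcal{X}^{(i)}_M(2^{[n]})\}$. Next, because $\Phi$ is linear on the $\wv$-coordinates and the identity on $(\xv,\yv)$, the coupling constraints $\wv^{(1)}_{\vv}=\wv^{(2)}_{\vv}$ for all vertices $\vv$ are equivalent to $\wv^{(1)}_S=\wv^{(2)}_S$ for all $S\subseteq[n]$. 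Applying Geoffrion's primal characterization exactly as in Section~\ref{sec:M2} gives that $\LAG^V$ equals the optimal value of the program
\begin{equation*}
\min\ \sum_{i\in\{1,2\}}\iprod{\cv^{(i)}}{\xv^{(i)}}+\iprod{\dv^{(i)}}{\yv^{(i)}}\ \ \text{s.t.}\ (\xv^{(i)},\yv^{(i)},\wv^{(i)})\in\conv\{\mathcal{X}^{(i)}_V\},\ \wv^{(1)}_{\vv}=\wv^{(2)}_{\vv}\ \forall\vv,
\end{equation*}
which under $\Phi$ is identical to program~(\ref{ex_main_prob_monomial_primal}) with $\mathcal{S}=2^{[n]}$. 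Then Theorem~\ref{thm:A_subset_B} (its ``as a consequence'' clause, strong duality for $\mathcal{S}=2^{[n]}$) yields $\LAG^V=\OPT$; weak duality gives the reverse inequality trivially, or one simply reads off equality.

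The main obstacle I anticipate is handling the bookkeeping of the two auxiliary coordinates cleanly: strictly speaking the V-reformulation as written in~(\ref{ex_dual_prob_vertex_inner}) keeps the coupling constraint $\xv^{(1)}=\xv^{(2)}$ separately from $\wv^{(1)}=\wv^{(2)}$, whereas in the M-setup one assumes $\{j\}\in\mathcal{S}$ so that $\xv^{(1)}=\xv^{(2)}$ is subsumed. I would resolve this by noting that the singleton monomials $x_j$ lie in the span of the vertex polynomials (e.g.\ $x_j=\sum_{\vv:\,v_j=1}\prod_s\sigma_{v_s}(x_s)$), so the $\xv$-coupling is already implied by the $\wv^{V}$-coupling; thus no generality is lost. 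A secondary, more technical point is verifying that $\Phi$ restricted to the relevant coordinates is genuinely invertible — this follows because the $2^n$ vertex polynomials $\{\prod_s\sigma_{v_s}(x_s)\}_{\vv}$ form a basis of the space of multilinear polynomials in $n$ variables (they are a partition of unity supported on distinct vertices of $\{0,1\}^n$), the same space spanned by the $2^n$ squarefree monomials. Once these two points are dispatched, the rest is a direct appeal to Theorem~\ref{thm:A_subset_B} and Geoffrion's theorem.
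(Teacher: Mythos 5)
Your proposal is correct, but it takes a different route from the paper. The paper's own proof is direct: after invoking Geoffrion's primal characterization, it observes that the lifted points $\mathcal{J}^{(i)}=\{(\xv^{(i)},\wv^{(i)})\}$ are affinely independent (immediate, since each $\wv^{(i)}$ is a distinct standard unit vector indicating which vertex $\xv^{(i)}$ is), so Proposition~\ref{prop:decomposibility} applies at once and the feasible region of the primal characterization is exactly the convex hull of the coupled set, giving $\OPT=\LAG^V$ in a few lines. You instead reduce the V-reformulation to the M-reformulation with $\mathcal{S}=2^{[n]}$ via the zeta/M\"obius change of basis between the vertex-indicator polynomials and the squarefree monomials, and then quote Theorem~\ref{thm:A_subset_B}. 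Your key claims all check out: the $2^n\times 2^n$ zeta matrix of the Boolean lattice is invertible, so $\Phi$ is a bijection of the ambient space that is the identity on $(\xv,\yv)$, carries $\conv\{\mathcal{X}^{(i)}_V\}$ onto $\conv\{\mathcal{X}^{(i)}_M(2^{[n]})\}$, and matches the two families of coupling constraints; and the identity $x_j=\sum_{\vv:\,v_j=1}\prod_s\sigma_{v_s}(x_s)$, valid on every extreme point and hence on the hull, correctly disposes of the separate $\xv$-coupling. What your route buys is a clean structural statement the paper only hints at — the full vertex reformulation and the full monomial reformulation are the same lift in two different bases, which explains uniformly why both close the duality gap. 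What it costs is that you route through the machinery of Theorem~\ref{thm:A_subset_B} (whose proof is an induction ultimately resting on the same Proposition~\ref{prop:decomposibility}), whereas in the vertex coordinates the affine-independence hypothesis is trivially visible and the conclusion is immediate. Both arguments are valid; the paper's is shorter, yours is more unifying.
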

Details of the proof are presented in the Appendix \ref{sec:Strong duality of V-Lagrangian}.
Now we show how to solve (\ref{ex_dual_prob_vertex}) in the original space. The inner problem (\ref{ex_dual_prob_vertex_inner}) is decomposed into sub-problems taking the following form for some values of $\lambda, \mu$:
\begin{equation}
       \label{vertex_dual_prob_inner}
        \min_{\xv^{(i)},\yv^{(i)}} \left\{\iprod{\cv^{(i)} + \lambda}{\xv^{(i)}} + \iprod{\dv^{(i)}}{\yv^{(i)}} + \iprod{\mu}{\wv^{(i)}} : (\xv^{(i)},\yv^{(i)},\wv^{(i)}) \in \mathcal{X}^{(i)}_V\right\}.
\end{equation}


Given $\mathcal{V} \subseteq \{0,1\}^n$, one can model $\xv \in \{0,1\}^{n} \setminus \mathcal{V}$ by no-good cuts or more general methods~\cite{laporte1993integer,angulo2015forbidden}.
Thus, solving (\ref{vertex_dual_prob_inner}) can be accomplished in the original space by dividing the optimization task into two parts as shown in Algorithm~\ref{solve_vertex_dual_prob_inner}.

\begin{algorithm}[H]
  \caption{Solve (\ref{vertex_dual_prob_inner})}\label{solve_vertex_dual_prob_inner}
  \Input{$i,\lambda,\mu$}
  \Output{the optimal solution of (\ref{vertex_dual_prob_inner})}

$\mathcal{V} \leftarrow \{\vv : \mu_{\vv} \neq 0 \}$
  
   $(\xv_*,\yv_*) \leftarrow \argmin\limits_{(\xv^{(i)},\yv^{(i)}) \in \mathcal{X}^{(i)}} \iprod{\cv^{(i)} + \lambda}{\xv^{(i)}} + \iprod{\dv^{(i)}}{\yv^{(i)}} : \xv^{(i)} \not\in \mathcal{V}$

   $p_* \leftarrow \iprod{\cv^{(i)} + \lambda}{\xv_*} + \iprod{\dv^{(i)}}{\yv_*}$

  \ForEach{$\vv \in \mathcal{V}$ }{
  $(\vv,\yv') \leftarrow \argmin\limits_{(\vv,\yv^{(i)}) \in \mathcal{X}^{(i)}} \iprod{\cv^{(i)} + \lambda}{\vv} + \iprod{\dv^{(i)}}{\yv^{(i)}} $

    $p' \leftarrow \iprod{\cv^{(i)} + \lambda}{\vv} + \iprod{\dv^{(i)}}{\yv'} + \mu_{\vv} $

  \If{$p' < p_*$}{
    $\xv_* \leftarrow \vv,\yv_* \leftarrow \yv'$
    $p_* \leftarrow p'$
  }
  }
  \Return $\xv^*,\yv_*,p_*$
\end{algorithm} 


  

Algorithm \ref{solve_vertex_dual_prob_inner} uses the fact that any feasible solution $(\wv^{(i)})_{\vv}$ is non-zero if and only if $\xv^{(i)} = \vv$. Another consequence of this fact is that, since there are two sub-problems of the type (\ref{vertex_dual_prob_inner}), the support of the sub-gradient (on the $\mu$ variables) is at most $2$.
Therefore, if one updates $\mu,\lambda$ by the sub-gradient method or classical bundle(-level) method, then $\norm{\mu}_0$ increases by at most $2$ for each iteration. Also, observe that     $\argmin\limits_{(\vv,\yv^{(i)}) \in \mathcal{X}^{(i)}} \iprod{\cv^{(i)} + \lambda}{\vv} + \iprod{\dv^{(i)}}{\yv^{(i)}}$ is independent of $\lambda,\mu$ and therefore one may store the choice of $\yv'$ for each $\vv$ to avoid repeated evaluation in Algorithm 1.

    





We next compare the number of iterations to solve (\ref{ex_dual_prob_vertex})
using a sub-gradient method  
against the number of iterations for the scenario decomposition algorithm (SDA) of~\cite{ahmed2013scenario}. 
At a high level, the main difference between our approach here and the 
SDA is that instead of updating the dual values corresponding to the solutions found in previous iteration, the SDA simply eliminates the corresponding solutions from each sub-problem in the next iteration.  Details of SDA are presented in the Appendix~\ref{sec:SDA}. In particular,  we show that using a sub-gradient method to solve (\ref{ex_dual_prob_vertex}) cannot be much worse than SDA with $\lambda =0$ setting (this setting is the main focus of~\cite{ahmed2013scenario}) under the assumption of non-negativity of objective function value. 
\begin{proposition}
    \label{prop_comparision} 
    Suppose that $c^{(i)}, d^{(i)}$ are non-negative for $i \in \{1,2\}$. Suppose that SDA with $\lambda = 0$ terminates in $t_{SDA}$ iterations. 
    Let $\phi := \max_{i \in \{1,2\},(\xv^{(i)},\yv^{(i)}) \in \mathcal{X}^{(i)}}$ $ \left\vert\iprod{\cv^{(i)}}{\xv^{(i)}} + 
   \iprod{\dv^{(i)}}{\yv^{(i)}} \right\vert$. Then there exists a sub-gradient algorithm that solves (\ref{ex_dual_prob_vertex}) to $\epsilon$-optimality (additive error) within $O\left( \left(\frac{  \sqrt{n}\phi}{\epsilon}\right)^2\right)\cdot t_{SDA} \quad { iterations.}$
\end{proposition}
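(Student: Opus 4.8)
The plan is to bound the number of subgradient iterations needed to solve \eqref{ex_dual_prob_vertex} by comparing the quality of the dual bound obtained after $t_{SDA}$ rounds of SDA (which, by assumption, already gives zero gap, i.e. recovers $\OPT$) with the per-iteration progress guarantee of a subgradient method, and then invoking a standard convergence rate. First I would recall the classical nonsmooth convergence bound: if $L^V(\lambda,\mu)$ is concave, its subgradients are bounded in norm by some $G$, and an optimal dual solution $(\lambda^*,\mu^*)$ exists with $\norm{(\lambda^*,\mu^*)}\le R$, then projected/constant-stepsize subgradient ascent reaches $\LAG^V - \epsilon$ within $O\!\left((GR/\epsilon)^2\right)$ iterations. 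So the heart of the argument is to produce explicit bounds on $G$ and $R$ in terms of $n$, $\phi$, and $t_{SDA}$, and then check that $GR = O(\sqrt{n}\,\phi)\cdot\sqrt{t_{SDA}}$ so that $(GR/\epsilon)^2 = O((\sqrt n\phi/\epsilon)^2)\cdot t_{SDA}$.

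For the subgradient norm bound $G$: a subgradient of $L^V$ at $(\lambda,\mu)$ is $(\xv^{(1)}-\xv^{(2)},\,\wv^{(1)}-\wv^{(2)})$ evaluated at an optimal solution of the inner problem \eqref{ex_dual_prob_vertex_inner}. The $\xv$-part lies in $\{-1,0,1\}^n$, contributing at most $\sqrt n$. The crucial observation — already noted in the excerpt right after Algorithm~\ref{solve_vertex_dual_prob_inner} — is that $\wv^{(i)}$ is the indicator of which vertex $\xv^{(i)}$ equals, so each $\wv^{(i)}$ has exactly one nonzero entry equal to $1$; hence $\wv^{(1)}-\wv^{(2)}$ has at most two nonzero entries, each $\pm1$, contributing at most $\sqrt2$. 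Thus $G\le \sqrt{n+2} = O(\sqrt n)$, independent of $\phi$ and of $t_{SDA}$.

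For the bound $R$ on an optimal dual solution: this is where I expect the main obstacle. One needs to argue that there exists a maximizer $(\lambda^*,\mu^*)$ of $L^V$ with $\norm{(\lambda^*,\mu^*)} = O(\phi\sqrt{t_{SDA}})$. The idea is to exploit the structure that SDA terminates in $t_{SDA}$ iterations: SDA with $\lambda=0$ eliminates at most two solutions per iteration, so after termination the "relevant" vertices number $O(t_{SDA})$, and the zero-gap certificate can be expressed using $\mu$-multipliers supported on only those $O(t_{SDA})$ coordinates. On each such coordinate the multiplier needs to be only as large as the objective spread $\phi$ (so that the corresponding vertex becomes unattractive in the inner problem once its "column" is active, mimicking the elimination step of SDA), and $\lambda$ can be taken $0$. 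Summing $O(t_{SDA})$ coordinates each of magnitude $O(\phi)$ gives $\norm{\mu^*}_2 = O(\phi\sqrt{t_{SDA}})$. Combining, $GR = O(\sqrt n)\cdot O(\phi\sqrt{t_{SDA}}) = O(\sqrt n\,\phi)\cdot\sqrt{t_{SDA}}$, and plugging into the $O((GR/\epsilon)^2)$ rate yields the claimed $O((\sqrt n\phi/\epsilon)^2)\cdot t_{SDA}$ bound. The non-negativity of $c^{(i)},d^{(i)}$ is used to guarantee $\phi$ controls all relevant objective differences (everything is between $0$ and $\phi$), so no cancellation blows up the multipliers; I would isolate exactly this point, since the translation from "SDA eliminates a solution" to "a bounded dual multiplier suffices" is the delicate step and requires carefully reconstructing an optimal $\mu^*$ from the SDA trajectory and verifying it certifies $L^V(\lambda^*,\mu^*)=\OPT$.
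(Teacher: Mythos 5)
Your outline follows the paper's proof exactly: the standard $O((GR/\epsilon)^2)$ subgradient rate, the bound $G\le\sqrt{n+2}$ from the unit support of each $\wv^{(i)}$, and the bound $R=O(\phi\sqrt{t_{SDA}})$ obtained by constructing an optimal dual pair with $\lambda^*=0$ and $\mu^*$ supported on the $O(t_{SDA})$ vertices touched by SDA, each entry of magnitude $O(\phi)$. The one substantive piece you defer --- actually building $\mu^*$ and verifying $L^V(\lambda^*,\mu^*)=\OPT$ --- is indeed where the paper spends its effort, and it is slightly more delicate than ``set each coordinate to roughly $\phi$'': the paper sets $(\mu^*)_{\vv}$ to a specific convex combination $\frac{-p^{(2)}_{low}}{p^{(1)}_{low}+p^{(2)}_{low}}p^{(1)}(\vv)+\frac{p^{(1)}_{low}}{p^{(1)}_{low}+p^{(2)}_{low}}p^{(2)}(\vv)$ on the eliminated vertices so that both inner subproblems simultaneously attain their minimum at the SDA-optimal vertex, and it must separately treat the degenerate case $p^{(1)}_{low}+p^{(2)}_{low}=0$ (where non-negativity forces everything to zero). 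You also omit the second termination mode of SDA --- a subproblem becoming infeasible --- which the paper handles with a different construction of $\mu^*$ supported on the feasible first-block vertices $\mathcal{Y}^{(1)}$ (with $|\mathcal{Y}^{(1)}|\le t_{SDA}$); without this case the argument is incomplete, though the same support-times-magnitude accounting closes it.
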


We also show that there are simple instances where SDA can be exponentially worse than a sub-gradient algorithm to solve (\ref{ex_dual_prob_vertex}). Details of the proof of Proposition~\ref{prop_comparision} and Proposition~\ref{prop:SDA_bad} are presented in Appendix \ref{sec:Comparison between V-Lagrangian and SDA}.
\begin{proposition}
\label{prop:SDA_bad}There exists an instance where SDA with $\lambda = 0$ setting requires $O\left(2^n\right)$ iterations to achieve $\epsilon$-optimality, while (\ref{ex_dual_prob_vertex}) can be solved to $\epsilon$-optimality by classic sub-gradient method in $O\left({n^2 (n+2)}\right)$ iterations.
\end{proposition}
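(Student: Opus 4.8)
The plan is to exhibit a single family of instances (parameterized by $n$) for which SDA with $\lambda=0$ is forced into exponentially many iterations, while the sub-gradient method on the V-Lagrangian dual terminates quickly. The natural candidate, in the spirit of the $\{0,1\}^n$ structure underlying (\ref{eq:V-cuts}), is a packing-type instance where $\mathcal{X}^{(1)}$ and $\mathcal{X}^{(2)}$ are chosen so that (i) there is a unique optimal coupled solution $\xv^{(1)}=\xv^{(2)}=\xv^\star$, but (ii) for the $\lambda=0$ objective each sub-problem individually prefers a \emph{different} vertex of $[0,1]^n$, and moreover the pool of ``locally attractive'' vertices that SDA must enumerate before it is forced to agree on $\xv^\star$ has size $\Omega(2^n)$. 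A clean way to engineer this is to make the per-block objective essentially $\langle \cv, \xv\rangle$ with all $\cv_j<0$ (so each block wants $\xv = \mathbf{1}$) but to break ties/feasibility so that $\mathbf 1$ is infeasible in one block and the blocks are driven through a long sequence of near-optimal $0/1$ points; the recourse variables $\yv^{(i)}$ can be used as slack to control exactly which $\xv$'s are feasible in each block. Since SDA at iteration $t$ has removed only $t$ previously-found solutions from each sub-problem, if the two sub-problems keep returning distinct vertices from a set of size $\Theta(2^n)$, SDA cannot certify optimality in fewer than $\Omega(2^n)$ iterations; this gives the lower bound.

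For the upper bound on the sub-gradient side I would invoke the machinery already assembled: by Theorem~\ref{thm_strong_dual_vertex} the V-Lagrangian dual (\ref{ex_dual_prob_vertex}) has zero gap, so it suffices to bound the number of sub-gradient iterations to reach additive $\epsilon$-optimality. Here one uses the standard convergence rate $O\big((G R/\epsilon)^2\big)$ for the projected sub-gradient method on a concave non-smooth function, where $R$ bounds the distance from the initial multipliers to an optimal dual solution and $G$ bounds the sub-gradient norm. For this specific instance the $\lambda$ component of the sub-gradient is $\xv^{(1)}-\xv^{(2)}\in\{-1,0,1\}^n$, so its norm is $O(\sqrt n)$, and the $\mu$ component has support at most $2$ (as noted after Algorithm~\ref{solve_vertex_dual_prob_inner}) with bounded entries, so $G=O(\sqrt n)$; similarly $R=O(\sqrt n)$ because an optimal $(\lambda,\mu)$ for such a small structured instance has bounded coordinates. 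Plugging in $\epsilon$ fixed (or absorbed into constants) yields the stated $O(n^2(n+2))$ bound — the extra factor $n$ presumably coming from a slightly more careful accounting of $R$ or from the $(n+2)$-dimensional nature of the active multiplier space. I would state the instance explicitly, verify feasibility/optimality by hand, and then cite the generic sub-gradient rate.

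The main obstacle is the explicit construction in the lower-bound half: one must design $\mathcal{X}^{(1)},\mathcal{X}^{(2)}$ so that SDA \emph{provably} cannot short-circuit — i.e., so that on every run (the algorithm may have freedom in which optimal sub-problem solution it returns) it is forced to enumerate exponentially many $\xv$-vectors before the two sub-problems are compelled to coincide on $\xv^\star$. This requires a careful choice where the ``second-best'' structure is rich: for instance, letting block $1$ be feasible for every $\xv\in\{0,1\}^n$ with a suitable $\yv$, and block $2$ feasible for every $\xv$ except those in some exponentially large, carefully-priced set, arranged so that the minimizers in the two blocks disagree until $2^n-1$ vertices have been cut. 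One has to rule out SDA getting lucky and terminating early, which means the optimality \emph{certificate} SDA uses (agreement of the two sub-problems' solution sets after pruning) must genuinely fail until late. I expect this to be the delicate part; the sub-gradient upper bound, by contrast, is essentially a plug-in of the standard rate together with the already-observed sparsity of the $\mu$-sub-gradient.
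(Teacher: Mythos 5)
There is a genuine gap: the proposition is an existence statement, and your proposal never actually exhibits the instance. You describe the properties a hard instance should have, correctly identify that the delicate part is forcing SDA to enumerate exponentially many vertices \emph{regardless of which optimal sub-problem solutions it happens to return}, and then defer exactly that construction ("I expect this to be the delicate part"). Without the explicit $\mathcal{X}^{(1)},\mathcal{X}^{(2)}$ and a counting argument that survives SDA's freedom of choice, neither the $\Omega(2^n)$ lower bound nor the $R=O(\sqrt{n})$ bound on the optimal dual multipliers (which your sub-gradient iteration count depends on) can be verified. Your upper-bound half is the right plug-in of the standard $O((GR/\epsilon)^2)$ rate, but $R$ is instance-dependent, so it too is left hanging.

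For comparison, the paper's construction is far simpler than the packing/infeasibility machinery you envision: take $\cv^{(1)}=-\epsilon\ev$, $\cv^{(2)}=+\epsilon\ev$, no constraints beyond $\xv^{(1)}=\xv^{(2)}\in\{0,1\}^n$, so $\OPT=0$ and $(\lambda^*,\mu^*)=(\epsilon\ev,\mathbf{0})$ is dual-optimal with $\norm{(\lambda^*,\mu^*)}^2=n\epsilon^2$, giving the polynomial sub-gradient bound via Proposition~\ref{prop:gradient}. The SDA lower bound then follows from a disjoint-pairs count: for every $\vv\in\{0,1\}^{n-2}$ the pair $\bigl((\vv,1,1),(\vv,0,0)\bigr)$ certifies $\text{LB}\le -2\epsilon$ as long as both points survive, and since these $2^{n-2}$ pairs are disjoint and SDA removes at most two points per iteration, the lower bound cannot rise above $-2\epsilon$ (while $\text{UB}=0$) before $\Omega(2^n)$ iterations. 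This structural argument is exactly what resolves your "SDA might get lucky" worry, and it requires no unique optimum, no recourse variables, and no carefully priced infeasible set; if you want to complete your proof, you should either adopt such an instance or supply a construction of comparable concreteness together with the adversarial counting argument.
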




\section{Lagrangian dual for general MIPs.}\label{sec:general}

In this section, we aim to generalize the ideas applied to the two-block case to the case of general MIPs, which can be thought of as multiple blocks of constraints and each pair of blocks of constraints share some common variables. 

\paragraph{V-Lagrangian dual.} We first note that the Algorithm~\ref{solve_vertex_dual_prob_inner} that allows for conducting  the computations in the original space, does not generalize very easily to the case of multiple-blocks of constraints, where different pairs of blocks share different sets of common variables. Therefore, the most natural generalization that can be tackled by the V-Lagrangian  dual is the case of two-stage stochastic programming type instances (also called star instances) where the blocks are: 
 (i) the constraints involving the first stage variables only, (ii) the constraints corresponding to a particular scenario in second stage. 
If the same set of first-stage binary variables appears in the constraints for every scenario, then it is easy to generalize Algorithm~\ref{solve_vertex_dual_prob_inner} in this setting, and it is straightforward to see that the V-Lagrangian dual has zero duality gap. 

\paragraph{M-Lagrangian for multi-block problems.}
For the case of general MIPs, we only pursue the M-Lagrangian dual approach. We first observe that strong duality may fail when naively constructing the M-Lagrangian dual, even if all constraints corresponding to all monomials are included. Consider the 3-block example:
\begin{subequations}
    \label{three_block_example}
    \begin{align}
        \min \ & t^{(1)} + t^{(2)} + t^{(3)} \\
       \text{s.t. } & (\xv^{(1)},t^{(1)}) \in \{ (\xv,t) :\xv \in \{0,1\}^2, t = x_1 x_2\}, \\
        & (\xv^{(2)},t^{(2)}) \in \{ (\xv,t) :\xv \in \{0,1\}^2, t =(1 - x_1) x_2\}, \\
        & (\xv^{(3)},t^{(3)}) \in \{ (\xv,t) :\xv \in \{0,1\}^2, t =(1 - x_1) (1-x_2)\}, \\
        \label{three_block_example_link}
        & x^{(1)}_1 = x^{(2)}_1,  x^{(1)}_2 = x^{(3)}_1, x^{(2)}_2 = x^{(3)}_2.
    \end{align}
\end{subequations}
\label{rem:strong_dual_fail}
The optimal value (\ref{three_block_example}) is $1$. On the other hand, each coupling constraint between any two blocks in (\ref{three_block_example_link}) only involves one variable. Therefore,
the classical Lagrangian of (\ref{three_block_example}) by dualizing (\ref{three_block_example_link}) coincides with M-Lagrangian with all monomials. The optimal value of classical Lagrangian dual is $0$ for this example.

Example (\ref{three_block_example}) illustrates that strong duality may not be achieved when naively applying the reformulations discussed in Section~\ref{sec:MV}. Such issue arises because the coupling constraints (\ref{three_block_example_link}) form a cycle and the generalization of Theorem \ref{thm:A_subset_B} does not go through. Therefore, it is natural to explore the tree-decomposition approach.

Consider a general MIP:
\begin{equation}
    \label{general_mip}
    \min \{\cv^{\top} \xv: A \xv \leq \bv, 
       \xv \text{ is  mixed-binary} \},
\end{equation}
where $\cv,\bv,A$ are all rational. 
\begin{definition} 
   The intersection graph \cite{fulkerson1965incidence} of (\ref{general_mip}) is a simple undirected graph that has a vertex for each variable in (\ref{general_mip}) and two vertices are adjacent if and only if their associated variables appear in any common constraint of $Ax \leq b$.
\end{definition}

Using the intersection graph of (\ref{general_mip}), one can reformulate (\ref{general_mip}) into ``a tree-structure" via the \textit{tree decomposition} \cite{robertson1986graph}.

\begin{definition}
    Let $G$ be a simple undirected graph. A tree decomposition of $G$ is a pair of $(\mathcal{T},\mathcal{Q})$ where $T$ is a tree and $\mathcal{Q} = \{ \mathcal{Q}_t : t \in V(\mathcal{T}) \}$ is a collection of vertices of $V(G)$ such that the following holds:
    \begin{enumerate}
        \item For each $v \in V(G)$, the set $\{t \in V(\mathcal{\mathcal{T}}) : v \in \mathcal{Q}_t \}$ forms a subtree of $\mathcal{T}$,
        \item If $(u,v) \in E(G)$, then there exists $t \in V(\mathcal{T})$ such that $u,v \in \mathcal{Q}_t$,
        \item $\bigcup_{t \in V(\mathcal{T})} \mathcal{Q}_t = V(G)$.
    \end{enumerate}
\end{definition}

Given a general MIP, its intersection graph $\mathcal{G}$, and a corresponding tree decomposition $(\mathcal{T},\mathcal{Q})$, one can reformulate (\ref{general_mip}) as:
\begin{equation}
    \begin{aligned}
        \label{tree_prob}
        \min \ & \sum_{i \in V(\mathcal{T})} \iprod{\cv^{(i)}}{\xv^{(i)}} \\
        \text{s.t. } &  \xv^{(i)} \in \mathcal{X}^{(i)},\forall i \in V(\mathcal{T}), \ x^{(i)}_v = x^{(j)}_v,\forall (i,j) \in E(\mathcal{T}) \text{ and } v \in \CQ_i \cap \CQ_j,
    \end{aligned}
\end{equation}
%
where each $\mathcal{X}^{(i)}$ makes local copies of variables in $\CQ_i$ and
each constraint in $A \xv \leq \bv$ is absorbed in some $\CX^{(i)}$  using a copy of the variables  
in $\CX^{(i)}$.
Henceforth, we assume that coupling constraints involve only binary variables. 
\subsection{Strong duality for M-Lagrangian dual for multi-block problems.}
For each $i \in V(\mathcal{T})$, we define
\begin{align*}
    \CX^{(i)}_{M} := \left\{ (\xv^{(i)},\wv^{(i)}) : \xv^{(i)} \in \CX^{(i)}, w^{(i)}_{S} = \prod_{i \in S} x^{(i)}_S,\forall S \subseteq \CQ_i \right\}.
\end{align*}
Let $\CQ_{ij} \subseteq V(G)$ denote $\CQ_i \cap \CQ_j$. For every edge $(i,j) \in E(\mathcal{T})$, let $\mathcal{S}_{ij} \subseteq 2^{\mathcal{Q}_{ij}}$ be a collection of subsets of $\CQ_{ij}$.
The extended formulation of (\ref{tree_prob}) parameterized by 
$\mathcal{S}:=\{\mathcal{S}_{ij}\}_{(i,j) \in E(\mathcal{T})}$%
takes the form of
\begin{equation}
    \begin{aligned}
        \label{ex_tree_prob}
        \min \ & \sum_{i \in V(\mathcal{T})} \iprod{\cv^{(i)}}{\xv^{(i)}} \\
        \text{s.t. } &  (\xv^{(i)},\wv^{(i)}) \in \mathcal{X}^{(i)}_M,\forall i \in V(\mathcal{T}),\   \wv^{(i)}_S = \wv^{(j)}_S, \forall (i,j) \in E(\mathcal{T}), S \in \mathcal{S}_{ij}.
    \end{aligned}
\end{equation}
\begin{assumption}
    \label{asp:tree_down_close}
    $\mathcal{S}_{ij}$ is down-closed for all $(i,j) \in E(\mathcal{T})$.
\end{assumption}
Similar to arguments in Section~\ref{sec:M2}, we consider the projection of the feasible region of the primal characterization of M-Lagrangian dual onto $\xv$:
\begin{equation}
    \begin{aligned}
        \mathcal{A}({\mathcal{S}},\mathcal{T}) := \left\{ \xv \;\middle\vert\;
   \begin{array}{@{}l@{}} \exists \wv,
 (\xv^{(i)},\wv^{(i)}) \in \conv\left\{\mathcal{X}^{(i)}_M\right\},\forall i \in V(\mathcal{T}), \\ 
 \wv^{(i)}_S = \wv^{(j)}_S, \forall (i,j) \in E(\mathcal{T}), S \in \mathcal{S}_{ij}
   \end{array} 
\right\},
    \end{aligned}
\end{equation}
and
\begin{equation}
    \begin{aligned}
        \mathcal{B}({\mathcal{S}},\mathcal{T}) := \bigcap_{\mathcal{U} \unlhd \mathcal{S}} \conv \left\{
   \begin{array}{@{}l@{}} 
 \xv^{(i)} \in \mathcal{X}^{(i)},\forall i \in V(\mathcal{T}), \\
         x^{(i)}_v = x^{(j)}_v,\forall (i,j) \in E(\mathcal{T}),\forall v \in \mathcal{U}_{ij}.
   \end{array}
\right\},
    \end{aligned}
\end{equation}

where $\mathcal{U} \unlhd\mathcal{S}$ implies $\mathcal{U} = \{\mathcal{U}_{ij}\}_{(i,j) \in E(\mathcal{T})}$ and $\mathcal{U}_{ij} \in \mathcal{S}_{ij}$ for all $(i,j) \in \mathcal{T}$.
\begin{theorem}
    \label{thm:strong_duality_tree}
    Under the assumption \ref{asp:tree_down_close}, $\mathcal{A}({\mathcal{S}},\mathcal{T}) \subseteq \mathcal{B}({\mathcal{S}},\mathcal{T})$. Therefore, when $\CS_{ij} = 2^{\CQ_{ij}},\forall (i,j) \in E(\mathcal{T})$, then strong duality holds between (\ref{tree_prob}) and its M-Lagrangian relaxation.
\end{theorem}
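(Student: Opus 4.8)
The plan is to mimic the two-block argument of Theorem~\ref{thm:A_subset_B}, lifting it to the tree via an induction on the structure of $\mathcal{T}$. The statement has two parts: first the containment $\mathcal{A}(\mathcal{S},\mathcal{T}) \subseteq \mathcal{B}(\mathcal{S},\mathcal{T})$ under Assumption~\ref{asp:tree_down_close}, and second the strong-duality consequence when every $\mathcal{S}_{ij} = 2^{\mathcal{Q}_{ij}}$. The second part should follow from the first exactly as in the two-block case: when all monomials are present, one shows that the reverse containment $\mathcal{B}(\mathcal{S},\mathcal{T}) \subseteq \mathcal{A}(\mathcal{S},\mathcal{T})$ also holds — indeed $\mathcal{B}$ with all $\mathcal{U}_{ij}$ ranging over all of $2^{\mathcal{Q}_{ij}}$ forces agreement of every monomial on every shared variable set, and a convex-combination/moment-matching argument (a local version of the fact that a point in $\conv\{\mathcal{X}^{(i)}_M\}$ with all monomials agreeing across an edge can be glued) recovers a feasible $\wv$ for $\mathcal{A}$. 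Combined with $\mathcal{A} \subseteq \mathcal{B}$, this gives $\mathcal{A} = \mathcal{B}$; and one must also check that the feasible region of $\mathcal{B}$ with all monomials equals the convex hull of the original feasible set of (\ref{tree_prob}), so that the projected primal characterization $\mathcal{A}$ has the same optimum as (\ref{tree_prob}) — this is where the running-intersection property of the tree decomposition enters.

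For the containment $\mathcal{A} \subseteq \mathcal{B}$, I would fix $\xv \in \mathcal{A}(\mathcal{S},\mathcal{T})$ with witnessing $\wv$ and lifted points $(\xv^{(i)}, \wv^{(i)}) \in \conv\{\mathcal{X}^{(i)}_M\}$, and fix an arbitrary selection $\mathcal{U} = \{\mathcal{U}_{ij}\}$ with $\mathcal{U}_{ij} \in \mathcal{S}_{ij}$; the goal is to show $\xv$ lies in the convex hull of the set of $\xv$'s feasible for (\ref{tree_prob}) restricted to the edge constraints $x^{(i)}_v = x^{(j)}_v$ for $v \in \mathcal{U}_{ij}$. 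Root the tree $\mathcal{T}$ arbitrarily and induct from the leaves: the key local lemma is the two-block statement — if $(\xv^{(i)},\wv^{(i)})$ and $(\xv^{(j)},\wv^{(j)})$ are in the respective convex hulls and their $\wv_S$ coordinates agree for all $S \in \mathcal{S}_{ij} \supseteq 2^{\mathcal{U}_{ij}}$ (here down-closedness of $\mathcal{S}_{ij}$ is used so that all subsets of $\mathcal{U}_{ij}$, in particular all singletons, are matched), then the pair can be written as a convex combination of pairs from $\mathcal{X}^{(i)} \times \mathcal{X}^{(j)}$ that agree on $\mathcal{U}_{ij}$. One peels off a leaf $j$ with parent $i$, applies this local lemma to merge block $j$ into block $i$ (updating the convex-combination representation at $i$ and carrying along the already-merged subtree below $j$), and repeats. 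Down-closedness of each $\mathcal{S}_{ij}$ is exactly what is needed at every merge step for Theorem~\ref{thm:A_subset_B}'s two-block conclusion to apply edge-by-edge.

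The main obstacle — the reason the 3-block example (\ref{three_block_example}) fails and the reason we must pass to a tree decomposition — is that the merging argument is only valid when the coupling graph is acyclic: when we merge leaf $j$ into its parent $i$, the variables shared on edge $(i,j)$ must not be re-constrained by any other not-yet-merged edge incident to the already-processed subtree, and this non-interference is guaranteed precisely by property~(1) of the tree decomposition (the subtree-of-bags condition / running intersection property), which ensures that once a shared variable has been "resolved" along one edge it does not resurface elsewhere. So the crux of the write-up is to make the induction carry a strong enough hypothesis — namely that after processing a subtree rooted at a node, the accumulated convex representation is consistent on *all* shared variables internal to that subtree — and to verify that merging across the next edge up preserves it, using the running-intersection property to rule out the cyclic obstruction. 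The remaining details (projecting out $\wv$, and the equality $\mathcal{B} = \conv$ of the (\ref{tree_prob})-feasible set in the all-monomials case) are routine given the two-block groundwork in Section~\ref{sec:M2}.
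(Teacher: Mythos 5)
Your proposal is correct and follows essentially the same route as the paper: fix one selection $\mathcal{U}\unlhd\mathcal{S}$, peel leaves off $\mathcal{T}$ and glue each leaf to its parent via the local lemma that agreement of all monomials indexed by subsets of $\mathcal{U}_{ij}$ (available by down-closedness) forces the two convex combinations to decompose consistently — in the paper this is Proposition~\ref{prop:decomposibility} powered by the affine independence of the monomial vectors (Proposition~\ref{prop:affine_indpt}) — and then intersect over all $\mathcal{U}$ using that projection of an intersection is contained in the intersection of projections. The only inefficiency is in the strong-duality step: you do not need $\mathcal{B}\subseteq\mathcal{A}$, since when $\mathcal{S}_{ij}=2^{\mathcal{Q}_{ij}}$ the selection $\mathcal{U}_{ij}=\mathcal{Q}_{ij}$ makes one intersectand of $\mathcal{B}(\mathcal{S},\mathcal{T})$ exactly $\conv$ of the feasible set of (\ref{tree_prob}), so $\mathcal{A}\subseteq\mathcal{B}$ together with weak duality already closes the gap.
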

Proof of Theorem \ref{thm:strong_duality_tree} is presented in the Appendix \ref{sec:M-Lagrangian for multi-block}.

\subsection{Hierarchical bounds for multi-block packing and covering problems}

In this subsection, we consider the $M$-Lagrangian  dual when $\mathcal{S}_{ij} =  \binom{\CQ_{ij}}{\leq k}$ for some fixed number $k \in [n]$. We refer  to $\LAG^{M},\mathcal{A}(\mathcal{S},\CT),\mathcal{B}(\mathcal{S},\CT)$ in this setting by $\LAG^{M}_k,\mathcal{A}_k(\CT),\mathcal{B}_k(\CT)$. The goal of this subsection is again to analyze $\LAG^{M}_k$ as a function of $k$ for packing and covering instances through the lens of bound implied by $\mathcal{B}_k$. 
Similar to assumption \ref{asp_recourse} in the two-block case, we need an additional assumption to analyze packing problems:
\begin{assumption}
  \label{asp_local_recourse}
  (Relatively complete recourse - general)
  For any $i \in V(\CT)$ and any $\xv^{(i)} \in \mathcal{X}^{(i)}$, there exists a feasible solution $\xv_*$ in (\ref{tree_prob}) such that $\xv_*^{(i)} = \xv^{(i)}$.
\end{assumption}

Like before, this assumption is easy to achieve by addition of suitable constraints. 
To present our result, we require the following definitions. 
\begin{definition}[Good and k-good]
    Given a subset $\mathcal{W}$ of variables $\xv$ in (\ref{general_mip}), let $\mathcal{V} := \{ i \in V(\CT) : \CQ_i \cap  \mathcal{W} \neq \emptyset\}$. Consider the sub-graph $\CT(\mathcal{W})$ of $\CT$ induced by $\mathcal{V}$. We say $\mathcal{W}$ is \textit{good} if every connected component $\mathcal{C}$ of $\CT(\mathcal{W})$ satisfies either
    \begin{enumerate}
        \item For any $(i,j) \in E(\mathcal{C}), |\CQ_i \cap \CQ_j \cap \mathcal{W}| \leq k$.
        \item There exists $i \in V(\mathcal{C})$ such that $(\CQ_j \cap \mathcal{W}) \subseteq (\CQ_i \cap \mathcal{W}),\forall j \in V(\mathcal{C})$.
    \end{enumerate}
    When every connected component $\mathcal{C}$ of $\CT(\mathcal{W})$ satisfies (1.), we call $\mathcal{W}$ $k$-good.
\end{definition}

For a subset $\mathcal{W}$ of variables in (\ref{general_mip}), we let $\chi_{\mathcal{W}}$ to be the indicator vector of $\mathcal{W}$, that is 
we let $(\chi_{\mathcal{W}})_{x} = 1$ if $ x \in \mathcal{W}$ and $0$ otherwise. 
Consider the following two linear programs:

\begin{equation}    \label{multi_block_packing_bound}
\eta_k := \left\{\min \sum_{\mathcal{W} \text{ is good}} \alpha_{\mathcal{W}}: \sum_{\mathcal{W} \text{ is good}} \alpha_{\mathcal{W}} \chi_{\mathcal{W}} \geq \textbf{1} \text{ and }\alpha_{\mathcal{W}} \geq 0.\right\}
\end{equation}


\begin{equation}
\label{multi_block_covering_bound}
\theta_k := \left\{\min \sum_{\mathcal{W} \text{ is $k$-good} } \alpha_{\mathcal{W}} : \sum_{ \mathcal{W} \text{ is $k$-good} } \alpha_{\mathcal{W}} \chi_{\mathcal{W}} \geq \textbf{1} \text{ and }\alpha_{\mathcal{W}} \geq 0.
    \right\}
\end{equation}


\begin{theorem}
    \label{thm_Hierarchical_bound_for_packing}
For any packing instance, under Assumption \ref{asp_local_recourse}, we have that $$\eta_k \cdot\OPT \leq \LAG_k^{M} \leq \OPT.$$
\end{theorem}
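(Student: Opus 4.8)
The plan is to prove Theorem~\ref{thm_Hierarchical_bound_for_packing} by combining the geometric inclusion $\mathcal{A}_k(\mathcal{T}) \subseteq \mathcal{B}_k(\mathcal{T})$ (from Theorem~\ref{thm:strong_duality_tree}) with a rounding/aggregation argument on the LP defining $\eta_k$, mirroring the two-block proof of Theorem~\ref{thm_two_block_packing_bound} but now carried out component-by-component over the tree. Since $\LAG^M_k$ equals the minimum of the packing objective over $\mathcal{A}_k(\mathcal{T})$ (the primal characterization), and $\mathcal{A}_k(\mathcal{T})\subseteq \mathcal{B}_k(\mathcal{T})$, it suffices to lower bound $\min\{\cv^\top\xv : \xv \in \mathcal{B}_k(\mathcal{T})\}$ by $\eta_k\cdot\OPT$; the upper bound $\LAG^M_k\le\OPT$ is immediate from weak duality (the reformulation is exact, so the Lagrangian dual never exceeds $\OPT$, and for packing both quantities are nonpositive).

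The core of the argument is the lower bound. First I would fix an optimal $\bar\xv \in \mathcal{B}_k(\mathcal{T})$. By definition, for each $\mathcal{U} \unlhd \mathcal{S}$ with $\mathcal{S}_{ij} = \binom{\CQ_{ij}}{\le k}$, $\bar\xv$ lies in the convex hull of feasible points of the ``relaxed coupling'' problem that only enforces agreement on coordinates in $\mathcal{U}_{ij}$. The key observation is that if $\mathcal{W}$ is a $k$-good set — wait, for packing we use \emph{good} sets — then restricting attention to the variables in $\mathcal{W}$, one can choose the $\mathcal{U}_{ij}$ appropriately (using condition (1) when $|\CQ_i\cap\CQ_j\cap\mathcal{W}|\le k$ on the relevant edges, or condition (2) to route agreement through a single bag) so that $\bar\xv$ restricted to $\mathcal{W}$ is a convex combination of points that are \emph{genuinely feasible} for the original problem~\eqref{tree_prob} on those variables. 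Here Assumption~\ref{asp_local_recourse} (relatively complete recourse) is what lets us extend such a partial assignment on $\mathcal{W}$ to a full feasible solution of~\eqref{tree_prob} without increasing the objective, because the objective coefficients $\cv^{(i)}$ are nonpositive and adding more variables set to their feasible values only helps (or, more precisely, we can complete any partial solution and the completion's contribution is $\le 0$). This yields, for each good $\mathcal{W}$, the bound $\sum_{x\in\mathcal{W}} c_x \bar x_x \ge \OPT$ (the objective restricted to $\mathcal{W}$ of a convex combination of feasible solutions, each of which has total objective $\ge \OPT$ and nonpositive contributions outside $\mathcal{W}$, is $\ge \OPT$).

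Then I would take the LP~\eqref{multi_block_packing_bound}: let $\{\alpha^*_{\mathcal{W}}\}$ be an optimal solution with $\sum_{\mathcal{W}}\alpha^*_{\mathcal{W}}\chi_{\mathcal{W}} \ge \mathbf{1}$ and $\sum_{\mathcal{W}}\alpha^*_{\mathcal{W}} = \eta_k$. Multiplying each inequality $\sum_{x\in\mathcal{W}} c_x\bar x_x \ge \OPT$ by $\alpha^*_{\mathcal{W}}\ge 0$ and summing over good $\mathcal{W}$ gives $\sum_{\mathcal{W}}\alpha^*_{\mathcal{W}}\sum_{x}(\chi_{\mathcal{W}})_x c_x\bar x_x \ge \eta_k\cdot\OPT$. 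The left side equals $\sum_x c_x\bar x_x\big(\sum_{\mathcal{W}}\alpha^*_{\mathcal{W}}(\chi_{\mathcal{W}})_x\big)$; since $c_x\bar x_x \le 0$ (packing) and the parenthesized coefficient is $\ge 1$, this is $\le \sum_x c_x\bar x_x = \min\{\cv^\top\xv:\xv\in\mathcal{B}_k(\mathcal{T})\} \le \LAG^M_k$. Chaining the inequalities: $\eta_k\cdot\OPT \le \LAG^M_k$, as desired.

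The main obstacle I anticipate is the step asserting that for a good set $\mathcal{W}$, $\bar\xv|_{\mathcal{W}}$ is a convex combination of \emph{original}-feasible partial solutions — i.e.\ that the ``good'' structural condition is exactly what's needed to route the agreement constraints through a subtree so that $\mathcal{B}_k$-membership collapses to genuine feasibility on $\mathcal{W}$. This requires carefully unpacking the two cases in the definition of good: case (1) lets all pairwise overlaps on $\mathcal{W}$ along the component be enforced directly (each overlap has size $\le k$, so lies in some $\mathcal{S}_{ij}$), while case (2) handles the situation where overlaps are too large by exploiting that a single bag dominates all others on $\mathcal{W}$, so all the relevant variables already live together in one $\mathcal{X}^{(i)}$ and no cross-edge coupling on $\mathcal{W}$ is needed. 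Making this precise — and checking that the convex-hull operations and the projection commute correctly with the restriction to $\mathcal{W}$, plus that relatively complete recourse cleanly extends partial solutions — is the delicate part; the rest is the same LP-duality bookkeeping as in the two-block case and in~\cite{dey2018analysis}.
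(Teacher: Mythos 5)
Your proposal is correct and follows essentially the same route as the paper: reduce to lower-bounding the objective over $\mathcal{B}_k(\mathcal{T})$ via Theorem~\ref{thm:strong_duality_tree}, prove for each good $\mathcal{W}$ that the objective restricted to $\mathcal{W}$ of any point of $\mathcal{B}_k(\mathcal{T})$ is at least $\OPT$ (the paper packages this as Lemma~\ref{lem_round}, choosing $\mathcal{U}^*$ per connected component, picking the best atom of the convex combination, and rounding the remaining coordinates to zero, with Assumption~\ref{asp_local_recourse} handling the dominated-bag case), and then aggregate these inequalities with the weights $\alpha^*_{\mathcal{W}}$ from the fractional-cover LP~\eqref{multi_block_packing_bound} using the packing sign conditions. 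The step you flag as delicate is exactly where the paper spends its effort, and your sketch of the two cases of goodness and the role of relatively complete recourse matches its argument.
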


\begin{theorem}
    \label{thm_Hierarchical_bound_for_covering}
For any covering instance (\ref{general_mip}), let $\tau := \max_{v \in V(\mathcal{G})} |\{i \in V(\mathcal{T}): v \in \CQ_i \}| $. Then we have that $$\dfrac{\theta_k}{ 1-\tau + \tau \cdot \theta_k}\cdot \OPT \leq \LAG_k^{M} \leq \OPT.$$
\end{theorem}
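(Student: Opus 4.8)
The plan is to mirror the two-block covering argument (Theorem \ref{thm_two_block_covering_bound}) but route everything through the relaxation $\mathcal{B}_k(\CT)$ guaranteed by Theorem \ref{thm:strong_duality_tree}. The upper bound $\LAG^M_k \le \OPT$ is immediate from weak duality, so the content is the lower bound. Since $\mathcal{A}_k(\CT) \subseteq \mathcal{B}_k(\CT)$, it suffices to bound below the optimal value of the linear program obtained by minimizing $\sum_i \iprod{\cv^{(i)}}{\xv^{(i)}}$ over $\mathcal{B}_k(\CT)$. I would first fix an optimal point $\xv^\star \in \mathcal{B}_k(\CT)$, so that for each $k$-good family $\mathcal{U} = \{\mathcal{U}_{ij}\}$ the point $\xv^\star$ (restricted appropriately) lies in the convex hull $\conv\{\xv^{(i)} \in \CX^{(i)}, \ x^{(i)}_v = x^{(j)}_v \ \forall (i,j), v \in \mathcal{U}_{ij}\}$; write $\xv^\star$ as a convex combination of feasible points of that set.

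The core step is a rounding/aggregation argument in the spirit of \cite{dey2018analysis}. Using the LP \eqref{multi_block_covering_bound} defining $\theta_k$, take an optimal fractional cover $\{\alpha_{\mathcal{W}}\}$ over $k$-good sets $\mathcal{W}$ with $\sum_{\mathcal{W}} \alpha_{\mathcal{W}} \chi_{\mathcal{W}} \ge \mathbf{1}$ and $\sum_{\mathcal{W}} \alpha_{\mathcal{W}} = \theta_k$. For each such $\mathcal{W}$, the set of edges "internal" to the components of $\CT(\mathcal{W})$ cutting down to $\le k$ variables is exactly what $\mathcal{B}_k(\CT)$ controls: there is a corresponding $k$-good family $\mathcal{U}$ with $\mathcal{W}$-relevant coordinates agreeing across those edges, so $\xv^\star$ can be expressed as a convex combination of points feasible for \eqref{tree_prob} after the coupling on $\mathcal{W}$ is imposed but ignoring coupling outside $\mathcal{W}$. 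From each such convex combination I would extract, by an averaging/probabilistic selection, a single feasible solution of \eqref{tree_prob} whose cost is controlled; the factor $\tau = \max_v |\{i : v \in \CQ_i\}|$ enters because a variable appearing in $\tau$ bags gets "double counted" that many times when its local copies are reconciled, and relatively complete recourse (Assumption \ref{asp_local_recourse}) is not needed here since we are in the covering case (nonnegative costs and data), so any partial assignment extends to a feasible one without increasing cost by a sign argument. Summing the extracted solutions weighted by $\alpha_{\mathcal{W}}/\theta_k$ and using nonnegativity of $\cv$ yields $\OPT \le \frac{1-\tau+\tau\theta_k}{\theta_k}\,\LAG^M_k$, i.e. the claimed bound; the quantity $1-\tau+\tau\theta_k$ is precisely the normalization one gets from combining the $\tau$-fold overcount with the $\theta_k$-fold cover.

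I expect the main obstacle to be the bookkeeping of local copies and the precise combinatorial statement that a $k$-good set $\mathcal{W}$ induces an admissible family $\mathcal{U} \unlhd \mathcal{S}$ with $\mathcal{S}_{ij} = \binom{\CQ_{ij}}{\le k}$: one must check that the components of $\CT(\mathcal{W})$ satisfying condition (1) translate into choices $\mathcal{U}_{ij} \in \binom{\CQ_{ij}}{\le k}$ that actually glue $\xv^\star$ back together, and that edges outside $\CT(\mathcal{W})$ can be left uncoupled without harming feasibility after the extraction step. A secondary subtlety is making the averaging argument produce an honest feasible point of \eqref{tree_prob} rather than merely a point of some relaxation — this is where the tree structure is essential, since along a tree one can propagate a consistent assignment from any root outward, and it is exactly the failure of this propagation on cycles (illustrated by the $3$-block Example \eqref{three_block_example}) that the tree-decomposition hypothesis rules out. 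Once the combinatorial lemma is in place, the numeric bound follows by the same convex-combination estimate as in the two-block case with $2-t$ replaced by its multi-block analogue $1-\tau+\tau\theta_k$.
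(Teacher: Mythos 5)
Your proposal follows essentially the same route as the paper's proof: restrict attention to $\mathcal{B}_k(\CT)$ via Theorem~\ref{thm:strong_duality_tree}, take the optimal fractional cover defining $\theta_k$, for each $k$-good $\mathcal{W}$ pass to the induced admissible family $\mathcal{U}\unlhd\mathcal{S}^k$ and reconcile the local copies outside $\mathcal{W}$ at a multiplicative cost of $\tau$, then combine --- the paper merely replaces your weighted sum over all $\mathcal{W}$ by an averaging argument that selects a single best $\mathcal{W}^*$, which is equivalent. The concrete realization of your ``extraction'' step is to round each atom of the convex combination up by taking the maximum of its local copies off $\mathcal{W}$ (feasible precisely because the instance is covering), exactly as you anticipate, and your observation that Assumption~\ref{asp_local_recourse} is not needed here is also correct.
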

In fact, Theorem~\ref{thm_Hierarchical_bound_for_packing} and Theorem~\ref{thm_Hierarchical_bound_for_covering} are generalizations of respectively  Theorem~\ref{thm_two_block_packing_bound} and Theorem~\ref{thm_two_block_covering_bound}, whose proofs can be found in the Appendix \ref{sec:M-Lagrangian for multi-block}.  

\begin{corollary}
    \label{cor:application_for_two_stage}
    Let $t := \dfrac{k}{n}$.
    For two-stage packing problem with $Z$ scenarios, under Assumption \ref{asp_local_recourse}, we have $$\left(2+\dfrac{2t-Z\cdot t-1}{Z-t}\right) \cdot \OPT \leq \LAG_k^{M} \leq \OPT.$$ For two-stage covering problem with $Z$ scenarios, we have $$\dfrac{1}{(1-Z)\cdot t + Z} \cdot \OPT \leq \LAG_k^{M} \leq \OPT.$$
\end{corollary}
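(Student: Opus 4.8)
The plan is to derive Corollary~\ref{cor:application_for_two_stage} directly from Theorem~\ref{thm_Hierarchical_bound_for_packing} and Theorem~\ref{thm_Hierarchical_bound_for_covering} by specializing the tree decomposition and the combinatorial quantities $\eta_k$, $\theta_k$, and $\tau$ to the star topology of two-stage stochastic programs. First I would fix the tree decomposition: for a two-stage problem with $Z$ scenarios, $\CT$ is a star with a central bag $\CQ_0$ holding (copies of) the $n$ first-stage binary variables and $Z$ leaf bags $\CQ_1,\dots,\CQ_Z$, one per scenario, where each leaf shares exactly the $n$ first-stage variables with the center, so $\CQ_{0j} = \CQ_0 \cap \CQ_j$ has cardinality $n$ for every $j$, and distinct leaves share nothing directly. (The second-stage variables of scenario $j$ live only in $\CQ_j$ and do not participate in coupling constraints.) Under Assumption~\ref{asp_local_recourse} this is exactly the setting of Theorem~\ref{thm_Hierarchical_bound_for_packing} and Theorem~\ref{thm_Hierarchical_bound_for_covering}.

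For the covering bound, the computation of $\tau$ is immediate: each first-stage variable appears in the center bag and in all $Z$ leaf bags, so $\tau = Z + 1$; any second-stage variable appears in a single bag, contributing less. Then I would evaluate $\theta_k$, the LP in \eqref{multi_block_covering_bound} over $k$-good subsets of variables. A subset $\mathcal{W}$ is $k$-good precisely when, on every connected component of the induced subtree, every shared edge-set $\CQ_i \cap \CQ_j \cap \mathcal{W}$ has size at most $k$. Taking $\mathcal{W}$ to be the full variable set, the induced subtree is the whole star and each edge carries $|\CQ_{0j} \cap \mathcal{W}| = n$ first-stage variables; this is $k$-good iff $n \le k$. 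For $k < n$ one instead covers all variables by $k$-good pieces — e.g. each piece being all of one scenario's variables together with a subset of at most $k$ first-stage variables — and the optimal fractional cover count works out to $n/k$ for the first-stage block (while leaf-only variables are covered at unit cost, absorbed into the same pieces). So $\theta_k = \max\{1, n/k\} = 1/t$ when $t \le 1$. Substituting $\tau = Z+1$ and $\theta_k = 1/t$ into $\theta_k/(1 - \tau + \tau\theta_k)$ and simplifying gives $\frac{1/t}{1 - (Z+1) + (Z+1)/t} = \frac{1}{t - (Z+1)t + (Z+1)} = \frac{1}{(1-Z)t + Z + 1 - t}$; I will need to reconcile the additive constant with the stated $\frac{1}{(1-Z)t + Z}$, which suggests the paper counts $\tau = Z$ (e.g., by not double-counting the central bag, or by a tree decomposition where first-stage variables sit only in the $Z$ leaves), and I would adopt whichever convention Theorem~\ref{thm_Hierarchical_bound_for_covering} actually uses so the formula matches.

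For the packing bound, I would analyze $\eta_k$, the LP in \eqref{multi_block_packing_bound} over \emph{good} subsets. ``Good'' is weaker than ``$k$-good'': a component is allowed instead to have one bag $i$ dominating all others on $\mathcal{W}$. On the star, a connected subtree either is a single leaf (or leaf plus its second-stage-only variables) — trivially dominated — or contains the center; if it contains the center together with several leaves, the dominance option requires one bag to contain $\CQ_j \cap \mathcal{W}$ for all $j$ in the component, which on the star forces the dominating bag to be the center and forces $\mathcal{W}$ restricted to the leaves to add nothing beyond first-stage variables. So the good sets are: the center-plus-first-stage block (always good by dominance), full single scenarios, and unions involving at most $k$ shared first-stage coordinates per edge. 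Writing the covering LP over these and optimizing, the first-stage coordinates can be covered once by the (cost-one) dominated block if we only need them, but to also cover the $Z$ scenario blocks cheaply one mixes: the clean computation gives $\eta_k = 2 - \frac{?}{Z - t}$-type expression, and I would push the arithmetic until it matches $2 + \frac{2t - Zt - 1}{Z - t}$; reassuringly, plugging $Z = 2$ (two-block) into that formula gives $2 + \frac{2t - 2t - 1}{2 - t} = 2 - \frac{1}{2-t} = 2 + \frac{1}{(2-t) \cdot(-1)}$, and with $t = k/n$ rescaled this should line up with $2 + \frac{1}{t-2}$ from Theorem~\ref{thm_two_block_packing_bound} under that theorem's convention $t = k/n$ — a consistency check I would perform carefully.

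The main obstacle I anticipate is not conceptual but bookkeeping: pinning down the exact tree decomposition the paper uses for two-stage problems (in particular whether there is a separate center bag, which determines $\tau$ and the exact set of ``good''/``$k$-good'' families), and then solving the two small covering LPs \eqref{multi_block_packing_bound}–\eqref{multi_block_covering_bound} exactly rather than just up to constants, so that the closed forms $2 + \frac{2t - Zt - 1}{Z-t}$ and $\frac{1}{(1-Z)t + Z}$ come out on the nose. Concretely I would (i) fix the decomposition and record $\CQ_{0j}$, $\tau$; (ii) exhibit a feasible fractional cover of the required value and a matching LP-dual certificate to prove $\eta_k$ and $\theta_k$ equal the claimed values; (iii) substitute into Theorems~\ref{thm_Hierarchical_bound_for_packing} and~\ref{thm_Hierarchical_bound_for_covering} and simplify; and (iv) sanity-check against the two-block Theorems~\ref{thm_two_block_packing_bound} and~\ref{thm_two_block_covering_bound} by setting $Z = 2$.
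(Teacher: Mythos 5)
Your overall strategy---specialize Theorems \ref{thm_Hierarchical_bound_for_packing} and \ref{thm_Hierarchical_bound_for_covering} to the star, compute $\tau$, and solve the two small LPs \eqref{multi_block_packing_bound}--\eqref{multi_block_covering_bound}---is exactly the paper's route, and your $Z=2$ sanity check is sound ($2+\tfrac{1}{t-2}=2-\tfrac{1}{2-t}$ does coincide with the formula at $Z=2$). But two pieces are missing or wrong as written. First, the tree decomposition: the paper does \emph{not} use a separate center bag. It uses $Z$ bags, one per scenario, each bag containing a full local copy of $\xv$ together with $\yv^{(i)}$, and block $1$ serves as the hub ($\CQ_{1j}=\{\xv\}$ for $j\ge 2$). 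Hence each first-stage variable lies in exactly $Z$ bags, $\tau=Z$, and $\tfrac{1/t}{1-Z+Z/t}=\tfrac{1}{(1-Z)t+Z}$ comes out on the nose. Your decomposition with a dedicated center bag gives $\tau=Z+1$ and a strictly different constant; ``adopting whichever convention makes the formula match'' is reverse engineering, not a proof---you must fix the decomposition first and then verify the arithmetic.

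Second, the packing computation is left with a literal question mark, and the inventory of good sets is not pinned down. The two families the paper uses are $\mathcal{F}_i:=\{\xv\}\cup\{\yv^{(i)}\}$ for $i\in[Z]$ (good via the dominance condition, bag $i$ dominating, since every other bag meets $\mathcal{F}_i$ only in $\xv$) and $\mathcal{W}_S:=\{\xv_S\}\cup\bigcup_{i\in[Z]}\{\yv^{(i)}\}$ for $|S|\le k$ (good, indeed $k$-good, via condition (1), each edge intersection being $\xv_S$). Restricting \eqref{multi_block_packing_bound} to these families and symmetrizing reduces it to
\[
\min\left\{\alpha+\beta \ :\ \alpha+t\beta\ge 1,\ \tfrac{\alpha}{Z}+\beta\ge 1,\ \alpha,\beta\ge 0\right\},
\]
whose optimum $\alpha=\tfrac{Z(1-t)}{Z-t}$, $\beta=\tfrac{Z-1}{Z-t}$ gives $\alpha+\beta=\tfrac{2Z-Zt-1}{Z-t}=2+\tfrac{2t-Zt-1}{Z-t}$; since $\OPT\le 0$ for packing, an upper bound on $\eta_k$ (i.e., any feasible fractional cover) is all Theorem \ref{thm_Hierarchical_bound_for_packing} requires, so your proposed dual certificate is unnecessary. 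Analogously, only the $\mathcal{W}_S$ are $k$-good (the $\mathcal{F}_i$ fail condition (1) when $n>k$), giving $\theta_k\le 1/t$, which combined with the monotonicity of $\theta\mapsto\theta/(1-\tau+\tau\theta)$ and $\tau=Z$ yields the covering constant. Until the decomposition is fixed, these families identified, and the LPs actually solved, the corollary is not established.
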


\begin{remark}
    \label{rem:tightness}
    The proof of Theorem \ref{thm_Hierarchical_bound_for_packing} and Theorem \ref{thm_Hierarchical_bound_for_covering} depends on Theorem \ref{thm:strong_duality_tree}. Both Theorems in fact present multiplicative bounds on optimizing over $\mathcal{B}(\mathcal{T})_k$ as against bounds directly for (\ref{tree_prob}). 
    Since Theorem \ref{thm:strong_duality_tree} states that $\mathcal{A}(\mathcal{T})_k \subseteq \mathcal{B}(\mathcal{T})_k$ and $\LAG_k^M$ is obtained by optimizing over $\mathcal{A}(\mathcal{\mathcal{T}})_k$, we don't expect that the multiplicative bounds presented in Theorem \ref{thm_Hierarchical_bound_for_packing} and Theorem \ref{thm_Hierarchical_bound_for_covering} are tight for general choice of $k$. When $k = 1$, which corresponds to the classic Lagrangian dual, one can show that $\mathcal{A}(\mathcal{T})_1 = \mathcal{B}(\mathcal{T})_1$ and in this case, \cite{dey2018analysis} provides examples where Theorem \ref{thm_Hierarchical_bound_for_packing} and Theorem \ref{thm_Hierarchical_bound_for_covering} are asymptotically tight as $n \to \infty$. For two block case, when $n = 2, k = 1$, the worst packing instance we find satisfies that $\dfrac{\LAG^M_k}{\OPT} \geq \dfrac{5}{4}$ while Theorem \ref{thm_Hierarchical_bound_for_packing} indicates $\dfrac{\LAG^M_k}{\OPT} \leq \dfrac{4}{3}$. When $n = 2, k = 1$, the worst covering instance we find satisfies that $\dfrac{\LAG^M_k}{\OPT} \leq \dfrac{3}{4}$ while Theorem \ref{thm_Hierarchical_bound_for_covering} indicates $\dfrac{\LAG^M_k}{\OPT} \geq \dfrac{2}{3}$. It remains open that whether the bound provided by Theorem \ref{thm_Hierarchical_bound_for_packing} and Theorem \ref{thm_Hierarchical_bound_for_covering} are tight. 
\end{remark}


\section{Preliminary numerical experimental}\label{sec:compute}

Our experiments are run on a Windows PC with a 12th Gen Intel(R) Core(TM) i7 processor and 16 GB RAM. We use Julia 1.9, relying on Gurobi version 9.0.2 as MIP solver and Clarabel~\cite{goulart2024clarabel} as QP solver (for bundle method implementation).

We generated two classes of instances which we refer to as (STAR-STAB) and (PATH-STAB). For both problems we construct $10$ blocks, each block solving a maximum cardinality stable set problem on a random graph on $100$ nodes. 
For (STAR-STAB) the blocks are coupled by $
\xv^{(1)}_j = \xv^{(i)}_j$, $\forall i \in \{2,\dots,10\}$, $\forall j \in \{1,\dots,33\}.$ For (PATH-STAB) the blocks are coupled by $
\xv^{(i)}_{67+j} = \xv^{(i+1)}_j,\forall i \in \{1,\dots,9\},\forall j \in \{1,\dots,33\}.
$
For each class of problems, we generate 10 instances.

For (PATH-STAB) we consider classic Lagrangian dual (L) and M-Lagrangian dual with only quadratic terms (QL). For (STAR-STAB), we additionally consider V-Lagrangian dual (VL) and SDA \cite{ahmed2013scenario}. We solve the Lagrangian dual by bundle method. For all Lagrangian duals we first start by solving classical Lagrangian dual for 600 seconds. Then we take this dual solution, append zeros for the additional dual variables and use this solution as the starting point for solving the more sophisticated Lagrangian dual in the remaining time. In our naive implementation, we solved the sub-problems sequentially.
See details in the Appendix \ref{sec:details}.

We set 1200 seconds as time limit for all methods. In our experiments, none of the methods terminated. In particular, the initial upper bound required by bundle methods was derived using the feasible solution of Gurobi. During the course of the run of the bundle methods, these upper bounds did not change. 


{\footnotesize
\begin{table}[h!]\centering
\caption{STAR-STAB}
\label{tab:comb}
\setlength{\tabcolsep}{3pt}
\begin{tabular}{@{}cccccc@{}}\toprule 
Methods
& Primal-dual gap   & Time  &  Iterations \\
\midrule
Gurobi & 10.0\%    &  1200 &-\\
L & 6.0\%    & 1200 & 127  \\
QL & 4.4\%    & 1200 &  96 \\
VL & 3.9\%     & 1200 & 103  \\
SDA & 8.6\%     & 1200 & 151  \\
\bottomrule
\end{tabular}
\caption{PATH-STAB}\label{tab:comb1}
\begin{tabular}{@{}cccccc@{}}\toprule 
Methods
& Primal-dual gap    & Time  &  Iterations \\
\midrule
Gurobi & 10.8\%   &  1200 &-\\
L & 3.5\%    & 1200 & 369  \\
QL & 1.2\%   & 1200 & 254  \\
\bottomrule
\end{tabular}
\end{table}
}



The results of our computation are present in Table~\ref{tab:comb}. The primal-dual gap is computed as 1 minus the ratio of the dual bound from the Lagrangian relaxation and the primal bound given by Gurobi's feasible solution. 
We observe that in the case of (STAR-STAB), the V-Lagrangian dual has the best performance, closely followed by M-Lagrangian dual with quadratic terms. The dual bounds obtained from classical Lagrangian dual and SDA are  worse. For the (PATH-STAB) instances (Table~\ref{tab:comb1}), the M-Lagrangian dual with quadratic terms outperforms the classical Lagrangian dual. For both instance classes, Gurobi's performance in obtaining dual bounds is significantly worse as it does not exploit decomposability. 

\section{Concluding remarks}\label{sec:conclude} We show that constructing  Lagrangian duals which achieve the twin goal of zero duality gap and maintaining decomposability of the sub-problems, comes at the cost of solving more challenging sub-problems with non-linear objective functions. Thus, this approach is perhaps most suitable when we have a large number of sub-problems, but each subproblem is not too large. An open question is whether one can achieve Lagrangian duals with decomposability and zero duality gap while solving an "easier" subproblem in each iteration.

On the computational side, we see that even implementing the sub-problems sequentially leads to significant improvements over both Gurobi (that cannot exploit decomposability) or the classical Lagrangian dual. We caution the reader that there are significantly more sophisticated methods for star (2-stage stochastic) problems~\cite{kuccukyavuz2017introduction}. While our methods may not necessarily be competitive against sophisticated methods, the preliminary results are encouraging. Significant more engineering in the implementation of our methods and also applying our methods to instances with more general tree-decomposition structures (see for example~\cite{bergner2015automatic} for identification of sparsity structures of MIPLIB instances~\cite{gleixner2021miplib}) remain future research directions.

\newpage

\addcontentsline{toc}{chapter}{References}
\bibliographystyle{plain}
\bibliography{ref}
\newpage
\appendix{\noindent \Large\textbf{Appendix}}

In this appendix, we use $\Delta^r := \{ \xv \in \R^r_+ : \sum_{i=1}^r x_i = 1 \}$ to denote a simplex of dimension $r$.  Let $\circ:\R^n \times \R^n \to \R^n$ denote pairwise product that is for any $\xv,\yv \in \R^n, (\xv \circ \yv)_{i} := (x_i y_i) \ \forall i \in [n]$.

\section{Scenario Decomposition Algorithm of~\cite{ahmed2013scenario}}\label{sec:SDA}

\begin{algorithm}[H]
  \caption{Scenario Decomposition \cite{ahmed2013scenario} to solve (\ref{prob})}
  $\text{UB} \leftarrow \infty,\text{LB} \leftarrow -\infty,$ 
  $\mathcal{V} \leftarrow \emptyset,\lambda \leftarrow 0$
  
    \While{$\text{UB} > \text{LB}$}{
         \text{\#compute lower bound}

    \ForEach{$i \in \{1,2\}$}{
    {\scriptsize $(\xv_*^{(i)},\yv_*^{(i)}) \leftarrow \argmin\limits_{(\xv^{(i)},\yv^{(i)}) \in \mathcal{X}^{(i)}} \iprod{\cv^{(i)} + (-1)^{i} \lambda}{\xv^{(i)}} + \iprod{\dv^{(i)}}{\yv^{(i)}} : \xv^{(i)} \not\in \mathcal{V}$}
     {\scriptsize  $p_i \leftarrow \iprod{\cv^{(i)} + (-1)^{i} \lambda}{\xv^{(i)}_*} + \iprod{\dv^{(i)}}{\yv^{(i)}_*}$ }
    }

         \If{$p_1 + p_2 > \text{LB}$}{$\text{LB} \leftarrow p_1 + p_2$} 
        
    \text{\#compute upper bound}
        {\scriptsize $h_1 \leftarrow \min  \iprod{\cv^{(1)}  + \cv^{(2)}}{\xv^{(1)}_*} + \iprod{\dv^{(1)}}{\yv^{(1)}_*}   + \iprod{\dv^{(2)}}{\yv^{(2)}} : (\xv^{(1)}_*,\yv^{(2)}) \in \mathcal{X}^{(2)} $}
        {\scriptsize $h_2 \leftarrow \min  \iprod{\cv^{(1)}  + \cv^{(2)}}{\xv^{(2)}_*} + \iprod{\dv^{(1)}}{\yv^{(1)}}   + \iprod{\dv^{(2)}}{\yv^{(2)}_*} : (\xv^{(2)}_*,\yv^{(1)}) \in \mathcal{X}^{(1)} $}
         
         
         
        
        \If{$\min\{h_1,h_2\}  < \text{UB}$}{$\text{UB} \leftarrow \min\{h_1,h_2\} $} 

        
        $\mathcal{V} \leftarrow \mathcal{V} \cup \{v_1,v_2\}$

        update $\lambda$
    }
\end{algorithm}

\section{Proof of Proposition \ref{prop:redundant_constraint}}
\label{sec:redundant_constraint}
The proof is based on the primal characterization of Lagrangian relaxation \cite{geoffrion2009lagrangean}.

\begin{proof}
    It is well-known that the optimal objective function of $(\ref{ex_dual_prob})$ is equal to that of the following convex problem:
    \begin{equation}
        \begin{aligned}
            \LAG^{ex} :=  \min_{(\xv,\yv,\wv)} \ & \sum_{i \in \{1,2\}}  \iprod{\cv^{(i)}}{\xv^{(i)}} + \iprod{\dv^{(i)}}{\yv^{(i)}} \\
\text{s.t.}\ & (\xv^{(i)},\yv^{(i)},w^{(i)}) \in \conv\left\{\mathcal{X}^{(i)}_{ex}\right\},\forall i \in \{1,2\}, \\
\ & \xv^{(1)} = \xv^{(2)}, \\
\ & w^{(1)} + w^{(2)} \geq 0.
\end{aligned}
\end{equation}
Suppose that 
$F^{(1)}(\cdot) $ is some affine function, let 
\begin{equation}
    \begin{aligned}
        \mathcal{N} := \left\{ (\xv,\yv,\wv) \;\middle\vert\;
   \begin{array}{@{}l@{}} (\xv^{(i)},\yv^{(i)},w^{(i)}) \in \conv\left\{\mathcal{X}^{(i)}_{ex}\right\},\forall i \in \{1,2\}, \\ 
 \xv^{(1)} = \xv^{(2)} 
   \end{array} 
\right\}.
    \end{aligned}
\end{equation}

We prove that 
for every $(\xv,\yv,\wv) \in \mathcal{N}$, $w^{(1)} + w^{(2)} \geq 0$ is satisfied. Since $w^{(i)}$ does not contribute to the objective function and feasibility of $\xv^{(i)}$, this implies that if we remove $F^{(1)}(\xv^{(1)}) + F^{(2)}(\xv^{(2)}) \geq 0$ from $(\ref{ex_main_prob})$, then the corresponding $\LAG^{ex}$ remains the same.
To see this, consider any $(\xv,\yv,\wv) \in \mathcal{N}$, by its definition, there exists $\lambda \in \Delta^r$ such that
\begin{align*}
   & \xv^{(2)} = \sum_{j \in [r]} \lambda_j \xv^{(2)j}, \\
   & w^{(2)} = \sum_{j \in [r]} \lambda_j F^{(2)}(\xv^{(2)j}), \\
   & \xv^{(2)j} \in \mathcal{X}^{(2)},\forall j \in  [r].
\end{align*}
By (\ref{implied}), it also follows that
\begin{align*}
  F^{(1)}(\xv^{(2)j}) +  F^{(2)}(\xv^{(2)j}) \geq 0,\forall j \in [r].
\end{align*}
This implies that

\begin{subequations}
    \begin{align}
        0 \leq \sum_{j=1}^r \lambda_j F^{(1)}(\xv^{(2)j}) + \lambda_j F^{(2)}(\xv^{(2)j}) & =  F^{(1)}  (\sum_{j=1}^{r} \lambda_j \xv^{(2)j}) + w^{(2)} \label{eq_dual_redunt_1} \\
   & = F^{(1)}( \xv^{(2)}) + w^{(2)} \label{eq_dual_redunt_2} \\
   & = F^{(1)}( \xv^{(1)}) + w^{(2)} \label{eq_dual_redunt_3} \\
    & = w^{(1)}  + w^{(2)} \label{eq_dual_redunt_4} ,
    \end{align}
\end{subequations}
where (\ref{eq_dual_redunt_1}) and (\ref{eq_dual_redunt_4}) utilize the fact that $F^{(1)}(\cdot)$ is an affine function and (\ref{eq_dual_redunt_3}) utilizes the fact that $ \xv^{(1)} = \xv^{(2)} $.
\end{proof}



    

\section{Proof of Theorem~\ref{thm:A_subset_B} and Theorem~\ref{thm:strong_duality_tree}}


Theorem \ref{thm:A_subset_B} is a special case of Theorem \ref{thm:strong_duality_tree} where the underlying graph is a single edge. Therefore, proving Theorem \ref{thm:strong_duality_tree} suffices. 

The primal characterization of M-Lagrangian-dual of (\ref{tree_prob}) is
\begin{equation}
    \begin{aligned}
        \label{primal_char_ex_tree_prob}
        \min \ & \sum_{i \in V(\mathcal{T})} \iprod{\cv^{(i)}}{\xv^{(i)}} \\
        \text{s.t. } &  (\xv^{(i)},\wv^{(i)}) \in \conv\left\{\mathcal{X}^{(i)}_M\right\},\forall i \in V(\mathcal{T}), \\
        & \wv^{(i)}_S = \wv^{(j)}_S, \forall (i,j) \in E(\mathcal{T}), S \in \mathcal{S}_{ij}.
    \end{aligned}
\end{equation}

We need some technical results to study the projection of its feasible region, that is, the set  $\mathcal{A}({\mathcal{S}},\mathcal{T})$. Recall that
\begin{equation}
    \begin{aligned}
    \label{primal_ex_tree_prob_dual}
        \mathcal{A}({\mathcal{S}},\mathcal{T}) = \left\{ \xv \;\middle\vert\;
   \begin{array}{@{}l@{}} \exists \wv,
 (\xv^{(i)},\wv^{(i)}) \in \conv\left\{\mathcal{X}^{(i)}_M\right\},\forall i \in V(\mathcal{T}), \\ 
 \wv^{(i)}_S = \wv^{(j)}_S, \forall (i,j) \in E(\mathcal{T}), S \in \mathcal{S}_{ij}
   \end{array} 
\right\}.
    \end{aligned}
\end{equation}

\begin{lemma} (Simplex lemma)
    \label{lem_simplex}
    Let $\{\pv_{i} \in \mathbb{R}^n: i \in [r]\}$ be a collection of affinely independent points. For every $\xv \in \conv\left\{\pv_{i} : i \in [r]\right\}$, there exists a unique $\alpha \in \Delta^r$ such that $\xv = \sum\limits_{i \in [r]} \alpha_{i} \pv_{i}$.
\end{lemma}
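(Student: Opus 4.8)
\textbf{Proof plan for Lemma~\ref{lem_simplex} (Simplex lemma).}

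The plan is to exploit the fact that affine independence of $\{\pv_i : i \in [r]\}$ means the affine map $\alpha \mapsto \sum_{i \in [r]} \alpha_i \pv_i$ restricted to the affine hull of $\Delta^r$ is injective, which is precisely what forces uniqueness. First I would record the definition: $\xv \in \conv\{\pv_i : i \in [r]\}$ means there exists $\alpha \in \Delta^r$ with $\xv = \sum_{i \in [r]} \alpha_i \pv_i$; this gives existence for free, so the entire content of the lemma is uniqueness. So suppose $\alpha, \beta \in \Delta^r$ both satisfy $\xv = \sum_{i} \alpha_i \pv_i = \sum_i \beta_i \pv_i$. Subtracting, $\sum_{i \in [r]} (\alpha_i - \beta_i)\pv_i = \mathbf{0}$, and summing the coordinates of the simplex constraint gives $\sum_{i \in [r]} (\alpha_i - \beta_i) = 1 - 1 = 0$.

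The next step is to invoke the definition of affine independence in the contrapositive form: a collection $\{\pv_i : i \in [r]\}$ is affinely independent precisely when the only coefficients $\gamma_i$ with $\sum_i \gamma_i \pv_i = \mathbf{0}$ \emph{and} $\sum_i \gamma_i = 0$ are $\gamma_i = 0$ for all $i$. Applying this with $\gamma_i := \alpha_i - \beta_i$, which we have just shown satisfies both conditions, yields $\alpha_i = \beta_i$ for every $i \in [r]$, i.e. $\alpha = \beta$. This completes the argument.

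There is no real obstacle here; the lemma is essentially a restatement of the definition of affine independence, and the only thing to be careful about is phrasing the affine-independence condition correctly (the coefficients must sum to zero, not merely be a nontrivial dependence among the $\pv_i$ as vectors). If one prefers a self-contained route that does not quote the ``sum-to-zero'' characterization, one can instead pass to the lifted points $\widehat{\pv}_i := (\pv_i, 1) \in \R^{n+1}$, which are linearly independent iff the $\pv_i$ are affinely independent; then $\sum_i \alpha_i \widehat{\pv}_i = \sum_i \beta_i \widehat{\pv}_i = (\xv, 1)$ forces $\alpha = \beta$ by linear independence. Either presentation is a couple of lines, and I would pick whichever matches the conventions already used elsewhere in the appendix.
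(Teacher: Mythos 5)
Your proof is correct and complete: existence is immediate from the definition of the convex hull, and uniqueness follows by subtracting the two representations and invoking the sum-to-zero characterization of affine independence (or, equivalently, the lifting to $(\pv_i,1)\in\R^{n+1}$). The paper states this lemma without proof, treating it as a standard fact, so there is no authorial argument to compare against; your write-up is the canonical one and correctly identifies the only subtlety, namely that the dependence coefficients must sum to zero rather than merely be a nontrivial linear dependence.
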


\begin{proposition}
    \label{prop:proj_intersection}
    Let $\mathcal{M}_i \in \R^{s} \times \R^{t}$ be a set for all $i \in \{1,\dots,p\}$. Let $\proj_{s}\mathcal{M}_i$ denote the projection of $\mathcal{M}_i$ onto first $s$ coordinates. Then it follows that
    \begin{equation}
        \begin{aligned}
            \bigcap\limits_{i \in \{1,\dots,p\}} \proj_{s} \mathcal{M}_i \supseteq \proj_{s} \bigcap\limits_{i \in \{1,\dots,p\}}  \mathcal{M}_i.
        \end{aligned}
    \end{equation}
\end{proposition}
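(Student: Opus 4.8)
\textbf{Proof plan for Proposition~\ref{prop:proj_intersection}.}

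The plan is to prove the claimed set inclusion by a direct element-chasing argument, which is the natural approach for an inclusion between projections of sets. First I would fix an arbitrary point $\xv \in \proj_s \bigcap_{i \in \{1,\dots,p\}} \mathcal{M}_i$ and show it lies in $\bigcap_{i \in \{1,\dots,p\}} \proj_s \mathcal{M}_i$; since the index set is finite (indeed the argument works for arbitrary index sets), it suffices to show $\xv \in \proj_s \mathcal{M}_i$ for each fixed $i$.

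The key step is to unwind the definitions. If $\xv \in \proj_s \bigcap_{i} \mathcal{M}_i$, then by definition of projection there is some $\yv \in \R^t$ with $(\xv, \yv) \in \bigcap_{i} \mathcal{M}_i$, i.e.\ $(\xv,\yv) \in \mathcal{M}_i$ for every $i \in \{1,\dots,p\}$. Fixing any particular $i$, the pair $(\xv,\yv)$ witnesses that $\xv$ is in the projection of $\mathcal{M}_i$ onto its first $s$ coordinates, so $\xv \in \proj_s \mathcal{M}_i$. Since $i$ was arbitrary, $\xv \in \bigcap_i \proj_s \mathcal{M}_i$, which establishes the inclusion.

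There is essentially no obstacle here: the statement is a purely set-theoretic fact, and the ``hard'' direction (the reverse inclusion) is precisely the one that is \emph{not} being claimed — it fails in general because different $\mathcal{M}_i$ may require different $\yv$-coordinates for the same $\xv$. I would perhaps add a one-line remark that equality need not hold, to orient the reader, since the usefulness of this proposition in the sequel (for controlling $\mathcal{A}(\mathcal{S},\mathcal{T})$) comes exactly from having the one-sided containment as an upper bound on the projected intersection, and the genuine work will be in the later lemmas (the Simplex Lemma and its application) that show the reverse-type containment holds in the structured situations arising from the tree decomposition.
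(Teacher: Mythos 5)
Your proof is correct: the element-chasing argument (a witness $\yv$ for the projection of the intersection serves as a witness for each individual projection) is the standard and essentially only way to establish this one-sided containment, and the paper itself states the proposition without proof precisely because it is this elementary. Your closing remark about why the reverse inclusion fails and why the one-sided containment is the useful direction for bounding $\mathcal{A}(\mathcal{S},\mathcal{T})$ matches the role the proposition plays in the proof of Theorem~\ref{thm:strong_duality_tree}.
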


\begin{proposition}
    \label{prop:commute_conv_proj}
    For any arbitrary $\mathcal{M} \subseteq \R^{s} \times \R^{t}$, it follows
    $$\proj_s \conv\{\mathcal{M}\} = \conv\{\proj_s \mathcal{M}\}.$$
\end{proposition}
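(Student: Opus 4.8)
\textbf{Proof plan for Proposition~\ref{prop:commute_conv_proj}.}

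The plan is to prove the two inclusions separately; this is a standard fact about commuting convex hull with linear projection, so the argument is short. For the inclusion $\proj_s \conv\{\mathcal{M}\} \subseteq \conv\{\proj_s \mathcal{M}\}$, I would take an arbitrary point $\zv$ in the left-hand side, so $\zv = \proj_s(\xv,\yv)$ for some $(\xv,\yv) \in \conv\{\mathcal{M}\}$. By Carath\'eodory (or just the definition of convex hull), write $(\xv,\yv) = \sum_{j} \alpha_j (\xv^j,\yv^j)$ with $(\xv^j,\yv^j) \in \mathcal{M}$ and $\alpha \in \Delta^r$. Applying $\proj_s$, which is linear, gives $\zv = \xv = \sum_j \alpha_j \xv^j = \sum_j \alpha_j \proj_s(\xv^j,\yv^j)$, a convex combination of points of $\proj_s \mathcal{M}$, hence $\zv \in \conv\{\proj_s\mathcal{M}\}$.

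For the reverse inclusion $\conv\{\proj_s \mathcal{M}\} \subseteq \proj_s \conv\{\mathcal{M}\}$, I would first note $\proj_s \mathcal{M} \subseteq \proj_s \conv\{\mathcal{M}\}$ trivially (since $\mathcal{M} \subseteq \conv\{\mathcal{M}\}$), and then observe that $\proj_s \conv\{\mathcal{M}\}$ is a convex set: it is the image of the convex set $\conv\{\mathcal{M}\}$ under a linear map, and linear maps preserve convexity. Taking the convex hull of both sides of the inclusion $\proj_s \mathcal{M} \subseteq \proj_s \conv\{\mathcal{M}\}$ and using that the convex hull of a set contained in a convex set is again contained in that convex set yields $\conv\{\proj_s \mathcal{M}\} \subseteq \proj_s \conv\{\mathcal{M}\}$. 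Combining the two inclusions gives equality.

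I do not anticipate a genuine obstacle here — the statement is elementary and both directions follow from the linearity of the projection together with the definition of convex hull. The only point requiring a modicum of care is making sure convex combinations are handled with finitely many terms (which is fine, either by definition of $\conv$ as the set of all finite convex combinations, or by invoking Carath\'eodory's theorem) so that the linearity of $\proj_s$ can be applied term by term.
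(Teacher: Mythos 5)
Your proof is correct; both inclusions are handled properly, with the forward direction using linearity of $\proj_s$ on a finite convex combination and the reverse direction using that the linear image of a convex set is convex. The paper states this proposition without proof as a standard fact, and your argument is precisely the standard one it implicitly relies on, so there is nothing to reconcile.
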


A direct application of Lemma \ref{lem_simplex} is the following result:
\begin{proposition}
    \label{prop:decomposibility}
    Let $\mathcal{M}_i \subseteq \R^{s} \times \R^{t_i}$  for $i \in [2]$. Suppose $\proj_s \mathcal{M}_1 \cup \proj_s \mathcal{M}_2$ is a collection of affinely independent points. For any $(\pv^{(i)}_*,\qv^{(i)}_*) \in \R^{s} \times \R^{t_i}$,
    \begin{align*}
(\pv_*^{(1)},\qv_*^{(1)},\pv_*^{(2)},\qv_*^{(2)}) \in \left\{ (\pv^{(1)},\qv^{(1)},\pv^{(2)},\qv^{(2)}) \;\middle\vert\;
   \begin{array}{@{}l@{}} 
 (\pv^{(i)},\qv^{(i)}) \in \conv\{ \mathcal{M}_i\},\forall i \in [2], \\ 
\pv^{(1)} = \pv^{(2)}  \\ 
   \end{array} 
\right\}
    \end{align*}
    if and only if 
    \begin{align*}
(\pv_*^{(1)},\qv_*^{(1)},\pv_*^{(2)},\qv_*^{(2)}) \in \conv\left\{ (\pv^{(1)},\qv^{(1)},\pv^{(2)},\qv^{(2)}) \;\middle\vert\;
   \begin{array}{@{}l@{}} 
 (\pv^{(i)},\qv^{(i)}) \in \mathcal{M}_i,\forall i \in [2], \\ 
\pv^{(1)} = \pv^{(2)}  \\ 
   \end{array} 
\right\}.
    \end{align*}
\end{proposition}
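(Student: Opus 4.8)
\textbf{Proof plan for Proposition~\ref{prop:decomposibility}.}

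The plan is to use the Simplex Lemma (Lemma~\ref{lem_simplex}) to pin down the convex combination coefficients uniquely in terms of the shared coordinates $\pv$, and then show that agreement on the $\pv$-part forces agreement of the coefficients across the two sets, which is exactly what lets us merge two independent convex combinations into a single one. The ``if'' direction is immediate: any point in the convex hull of the constrained product set $\{(\pv^{(1)},\qv^{(1)},\pv^{(2)},\qv^{(2)}) : (\pv^{(i)},\qv^{(i)}) \in \mathcal{M}_i, \pv^{(1)}=\pv^{(2)}\}$ is a convex combination of points each satisfying $\pv^{(1)} = \pv^{(2)}$ and $(\pv^{(i)},\qv^{(i)}) \in \mathcal{M}_i$; projecting each coordinate block shows the combination lies in the set on the left-hand side (the constraint $\pv^{(1)}=\pv^{(2)}$ is affine, hence preserved, and $(\pv^{(i)},\qv^{(i)}) \in \conv\{\mathcal{M}_i\}$ trivially). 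So the content is the ``only if'' direction.

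For ``only if'', suppose $(\pv_*^{(1)},\qv_*^{(1)},\pv_*^{(2)},\qv_*^{(2)})$ lies in the left-hand set, so $\pv_*^{(1)} = \pv_*^{(2)} =: \pv_*$ and $(\pv_*^{(i)},\qv_*^{(i)}) \in \conv\{\mathcal{M}_i\}$ for $i \in [2]$. Write $\mathcal{M}_i$ (or rather a minimal subset of its extreme points needed) as a convex combination: $(\pv_*^{(i)},\qv_*^{(i)}) = \sum_{\ell} \alpha^{(i)}_\ell (\pv^{(i)}_\ell, \qv^{(i)}_\ell)$ with $(\pv^{(i)}_\ell, \qv^{(i)}_\ell) \in \mathcal{M}_i$ and $\alpha^{(i)} \in \Delta$. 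Since $\proj_s\mathcal{M}_1 \cup \proj_s\mathcal{M}_2$ is affinely independent, in particular the $\pv$-projections appearing in the combination for $\mathcal{M}_1$ are affinely independent, and likewise for $\mathcal{M}_2$; so by Lemma~\ref{lem_simplex} applied in the $\pv$-space, the coefficients $\alpha^{(1)}$ expressing $\pv_* \in \conv\{\pv^{(1)}_\ell\}$ are unique, and similarly $\alpha^{(2)}$. Now here is the key point: because $\pv_* \in \conv\{\proj_s \mathcal{M}_1\} \cap \conv\{\proj_s \mathcal{M}_2\}$ and the \emph{combined} point set $\proj_s\mathcal{M}_1 \cup \proj_s\mathcal{M}_2$ is affinely independent, the representation of $\pv_*$ as a convex combination of points from that combined set is unique (again by Lemma~\ref{lem_simplex}); hence the two representations $\sum_\ell \alpha^{(1)}_\ell \pv^{(1)}_\ell$ and $\sum_\ell \alpha^{(2)}_\ell \pv^{(2)}_\ell$ must coincide coefficient-by-coefficient on the overlap and be zero elsewhere. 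In other words, the multiset of $\pv$-points with positive weight is the same for both, and the weights match. Consequently we may index the combinations by a common set: $\pv_* = \sum_\ell \beta_\ell \pv_\ell$ with $\pv_\ell \in \proj_s\mathcal{M}_1 \cap \proj_s\mathcal{M}_2$ and $\beta \in \Delta$, and for each $\ell$ there are $\qv^{(1)}_\ell, \qv^{(2)}_\ell$ with $(\pv_\ell, \qv^{(i)}_\ell) \in \mathcal{M}_i$. Then
\[
\sum_\ell \beta_\ell \,(\pv_\ell, \qv^{(1)}_\ell, \pv_\ell, \qv^{(2)}_\ell)
\]
is a convex combination of points in the constrained product set, and it equals $(\pv_*, \qv_*^{(1)}, \pv_*, \qv_*^{(2)})$ because its $\pv$-parts give $\pv_*$ by construction and its $\qv^{(i)}$-parts give $\sum_\ell \beta_\ell \qv^{(i)}_\ell = \qv_*^{(i)}$ by the uniqueness-of-coefficients argument applied again (the $\qv^{(i)}$ coordinate is determined once we know $\pv_*$ lies in a face spanned by affinely independent points, since each extreme point of $\conv\{\mathcal{M}_i\}$ carries a definite $\qv$-value). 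This places the starred point in the right-hand set, as desired.

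The main obstacle I expect is bookkeeping around how affine independence of $\proj_s\mathcal{M}_1 \cup \proj_s\mathcal{M}_2$ is used twice in subtly different ways: once to get uniqueness of $\alpha^{(i)}$ within each $\mathcal{M}_i$, and once — more importantly — to force the \emph{same} support and weights across the two, which is what makes the merged combination well-defined. A secondary subtlety is that $\mathcal{M}_i$ need not be finite or polyhedral, so one should be careful that any given point of $\conv\{\mathcal{M}_i\}$ is a finite convex combination (Carathéodory), and that ``affinely independent'' is meant to apply to whatever finite subsets arise; this is a hypothesis we are handed, so it should be invoked cleanly rather than reproved. Once these points are in place the rest is routine substitution.
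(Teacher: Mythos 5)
Your approach is exactly the one the paper intends: the paper offers no written proof of Proposition~\ref{prop:decomposibility} beyond calling it ``a direct application of Lemma~\ref{lem_simplex},'' and your argument --- uniqueness of barycentric coordinates with respect to the affinely independent set $\proj_s\mathcal{M}_1\cup\proj_s\mathcal{M}_2$ forces the two convex combinations to have identical support and weights on the $\pv$-coordinates, after which the two combinations can be zipped into one --- is precisely that application, with the easy ``if'' direction handled correctly.

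One imprecision worth fixing: your parenthetical claim that ``each extreme point of $\conv\{\mathcal{M}_i\}$ carries a definite $\qv$-value'' is false in general, and false in the paper's application, where many points of $\mathcal{M}_i$ project to the same $\pv$ (the fiber of $\proj_s$ over a given $\pv_\ell$ need not be a singleton, since affine independence is assumed only of the \emph{set} of projections). So you cannot pick a single $\qv_\ell^{(i)}$ with $(\pv_\ell,\qv_\ell^{(i)})\in\mathcal{M}_i$ and assert $\sum_\ell\beta_\ell\qv_\ell^{(i)}=\qv_*^{(i)}$; the original combination for block $i$ may spread the weight $\beta_\ell$ over several points in the fiber above $\pv_\ell$. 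The fix is routine: group the terms of each combination by their $\pv$-value, let $\bar\qv_\ell^{(i)}$ be the conditional average over the fiber, and note that $(\pv_\ell,\bar\qv_\ell^{(1)},\pv_\ell,\bar\qv_\ell^{(2)})$ lies in the convex hull of the constrained product set (take the product of the two conditional combinations), so $\sum_\ell\beta_\ell(\pv_\ell,\bar\qv_\ell^{(1)},\pv_\ell,\bar\qv_\ell^{(2)})$ does as well. With that patch the proof is complete.
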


For any fixed graph $\CT$, the index list $\mathcal{S}$ completely describes the constraints of $(\ref{primal_char_ex_tree_prob})$.
Next, we observe that if we only consider the constraints induced by a certain subset of $\mathcal{S}$, we can describe the corresponding projection. 
For any $\mathcal{U} \unlhd \mathcal{S}$, because of Assumption \ref{asp:tree_down_close}, we can construct the following relaxation of the feasible region of $(\ref{primal_char_ex_tree_prob})$ induced by $\mathcal{U}$:
\begin{equation}
    \begin{aligned}
        \mathcal{H}(\mathcal{U},\cT) := \left\{ (\xv,\wv) \;\middle\vert\;
   \begin{array}{@{}l@{}} 
 (\xv^{(i)},\wv^{(i)}) \in \conv\{\mathcal{X}^{(i)}_{M}\},\forall i \in V(\cT),\\ 
 (\wv^{(i)})_S = (\wv^{(j)})_S, \forall (i,j) \in E(\mathcal{T}), S \subseteq \mathcal{U}_{ij}
   \end{array} 
\right\}.
    \end{aligned}
\end{equation}

We first observe
\begin{proposition}
    \label{prop:affine_indpt}
    For any $i \in V(\CT)$ and $(i,j) \in E(\mathcal{T})$, 
    $$\mathcal{D}^{(i)}(\mathcal{U}_{ij}) := \{\{(\wv^{(i)})_S\}_{S\subseteq \mathcal{U}_{ij}} : (\xv^{(i)}) \in \{0,1\}^{|\CQ_i|}, \wv^{(i)}_S = \prod_{j \in S} x^{(i)}_j,\forall  S\subseteq \mathcal{U}_{ij}\}$$ is a collection of affinely independent points.
\end{proposition}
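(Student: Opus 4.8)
The plan is to reduce the claim to the nonsingularity of the zeta matrix of a Boolean lattice, which is triangular with unit diagonal. First I would observe that for $S \subseteq \mathcal{U}_{ij}$ the monomial $\prod_{j \in S} x^{(i)}_j$ depends on $\xv^{(i)} \in \{0,1\}^{|\CQ_i|}$ only through its restriction to the coordinates lying in $\mathcal{U}_{ij}$. Hence, writing $v$ for a generic point of $\{0,1\}^{\mathcal{U}_{ij}}$ and $\pv_v \in \R^{2^{|\mathcal{U}_{ij}|}}$ for the vector with entries $(\pv_v)_S := \prod_{j \in S} v_j$, indexed by the subsets $S \subseteq \mathcal{U}_{ij}$, we have $\mathcal{D}^{(i)}(\mathcal{U}_{ij}) = \{\pv_v : v \in \{0,1\}^{\mathcal{U}_{ij}}\}$; note that $(\pv_v)_S$ equals $1$ if $S \subseteq \operatorname{supp}(v)$ and $0$ otherwise, and that there are exactly $2^{|\mathcal{U}_{ij}|}$ such points, matching the dimension of the ambient space.

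Next I would collect these points as the rows of the square matrix $M$ with rows indexed by $T := \operatorname{supp}(v) \subseteq \mathcal{U}_{ij}$ and columns indexed by $S \subseteq \mathcal{U}_{ij}$, so that $M_{T,S} = 1$ if $S \subseteq T$ and $0$ otherwise --- the zeta matrix of $2^{\mathcal{U}_{ij}}$. Ordering both the rows and the columns by one common linear extension of set inclusion (say, by nondecreasing cardinality with ties broken arbitrarily) makes $M$ triangular: if $M_{T,S} = 1$ and $T \ne S$, then $S \subsetneq T$, so $S$ strictly precedes $T$ in the chosen order. Since every diagonal entry is $M_{T,T} = 1$, we get $\det M = 1 \ne 0$, so the rows $\{\pv_v\}$ are linearly independent. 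Finally, because the coordinate indexed by $S = \emptyset$ is identically equal to $1$ on every $\pv_v$ (empty product), linear independence of the $\pv_v$ is equivalent to their affine independence, which is the assertion. Alternatively one can argue the linear independence directly via M\"obius inversion on $2^{\mathcal{U}_{ij}}$, or by writing $M$ as the $|\mathcal{U}_{ij}|$-fold Kronecker power of $\left(\begin{smallmatrix}1 & 0\\ 1 & 1\end{smallmatrix}\right)$.

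I do not expect a genuine obstacle here: once the points are identified with the rows of the zeta matrix, the result is immediate. The only things to be careful about are that $\mathcal{D}^{(i)}(\mathcal{U}_{ij})$ ranges over \emph{all} subsets $S \subseteq \mathcal{U}_{ij}$, so that $M$ is square and the full (hence nonsingular) zeta matrix appears rather than some submatrix whose nonsingularity would be less immediate, and that the only coincidences among the points come from the restriction map $\xv^{(i)} \mapsto \xv^{(i)}|_{\mathcal{U}_{ij}}$, so that the count of $2^{|\mathcal{U}_{ij}|}$ distinct points is correct.
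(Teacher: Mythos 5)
Your proof is correct and rests on the same core fact as the paper's: the $0/1$ matrix with entries $1$ iff $S \subseteq T$ is unitriangular under any linear extension of set inclusion (the paper packages this as choosing a maximal support in a putative linear dependence and reading off a nonzero coefficient, rather than computing the determinant of the zeta matrix). Your reduction from affine to linear independence via the constant coordinate $w_{\emptyset} \equiv 1$ is in fact a bit cleaner than the paper's, which asserts $\mathbf{0} \in \mathcal{D}^{(i)}(\mathcal{U}_{ij})$ even though its own convention $w^{(i)}_{\emptyset} = 1$ precludes that.
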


\begin{proof}
For sanity check, we observe that  
every vector in $\mathcal{D}^{(i)}(\mathcal{U}_{ij})$ has dimension $2^{|\mathcal{U}_{ij}|}$ and $|\mathcal{D}^{(i)}(\mathcal{U}_{ij})| = 2^{|\mathcal{U}_{ij}|}$ \footnote{If $S = \emptyset$, we set $w^{(i)}_S = 1$.}.
Therefore, $\mathcal{D}^{(i)}(\mathcal{U}_{ij})$ satisfies the necessary condition to be a set of affinely independent points.
Moreover, since  $\textbf{0} \in \mathcal{D}^{(i)}(\mathcal{U}_{ij})$, it therefore suffices to prove that $\mathcal{D}^{(i)}(\mathcal{U}_{ij}) \setminus \{\textbf{0} \}$ is a set of linear independent points. Suppose this is not true, then there exists some $\{\lambda_l \neq 0\}$ such that
\begin{align*}
    & \sum_{l} \lambda_l \wv_l = 0, \\
    & \wv_l \in \mathcal{D}^{(i)}(\mathcal{U}_{ij}) \setminus \{\textbf{0}\}.
\end{align*}
For each $\wv_l$, we define $S_l := \{ v \in \mathcal{U}_{ij} : (\wv_l)_{\{v\}} = 1 \}$. By the construction of $\wv_l$, $(\wv_l)_S = 1$ if and only if $S \subseteq S_l$. Now choose $S^*$ to be any maximal $S_l$ among all possible choices of $l$ (such that $\lambda_l \neq 0$). We note that more than one maximal $S_l$ may exist, and we choose an arbitrary one to break a tie. Without loss of generality, we may assume that $S^* = S_1$. By construction, it follows that $(\wv_1)_{S_1} = 1$ and $(\wv_l)_{S_1} = 0$ for all $l \neq 1$ and $\lambda_l \neq 0$. This further implies that
$(\sum_{l} \lambda_l \wv_l)_{S_1} = \lambda_1 \neq 0$, which leads to contradiction.
\end{proof}




\begin{lemma}
\label{lem_single_char}
    \begin{align*}
        \proj_{\xv} \mathcal{H}(\mathcal{U},\cT) = \conv\left\{ \xv \;\middle\vert\;
   \begin{array}{@{}l@{}} 
 \xv^{(i)} \in \mathcal{X}^{(i)},\forall i \in V(\mathcal{T}), \\ 
x_v^{(i)} = x_v^{(j)},\forall v \in \mathcal{U}_{ij},\forall (i,j) \in E(\mathcal{T})  \\ 
   \end{array} 
\right\}.
    \end{align*}
\end{lemma}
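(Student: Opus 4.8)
\textbf{Proof proposal for Lemma~\ref{lem_single_char}.}

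The plan is to prove the two inclusions separately, exploiting the machinery developed in Propositions~\ref{prop:proj_intersection}, \ref{prop:commute_conv_proj}, \ref{prop:decomposibility} and \ref{prop:affine_indpt}. Denote the right-hand side polytope by $\mathcal{R}(\mathcal{U},\cT)$. For the inclusion $\mathcal{R}(\mathcal{U},\cT) \subseteq \proj_{\xv}\mathcal{H}(\mathcal{U},\cT)$, I would start from a vertex $\xv$ of the integral polytope inside the $\conv\{\cdot\}$, i.e.\ a point with $\xv^{(i)} \in \mathcal{X}^{(i)}$ and $x_v^{(i)} = x_v^{(j)}$ on every shared coordinate in $\mathcal{U}_{ij}$. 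For each such integral point one can define $\wv^{(i)}_S := \prod_{j \in S} x^{(i)}_j$ for $S \subseteq \CQ_i$; then $(\xv^{(i)},\wv^{(i)}) \in \mathcal{X}^{(i)}_M \subseteq \conv\{\mathcal{X}^{(i)}_M\}$, and since $\xv^{(i)}$ and $\xv^{(j)}$ agree on $\mathcal{U}_{ij}$, the monomials $\wv^{(i)}_S$ and $\wv^{(j)}_S$ agree for $S \subseteq \mathcal{U}_{ij}$. Hence every integral point lifts into $\mathcal{H}(\mathcal{U},\cT)$, and taking convex combinations (the lift is affine-linear in the $0/1$ data, or one simply observes $\mathcal{H}$ is convex and contains all these lifts) gives the inclusion.

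The reverse inclusion $\proj_{\xv}\mathcal{H}(\mathcal{U},\cT) \subseteq \mathcal{R}(\mathcal{U},\cT)$ is the substantive direction. Here the key is to remove the $\wv$-coordinates not indexed by subsets of any $\mathcal{U}_{ij}$ (they play no role after intersecting only the $\mathcal{U}$-constraints) and then to argue that the coupling $\wv^{(i)}_S = \wv^{(j)}_S$ for $S \subseteq \mathcal{U}_{ij}$, when we are only looking at the relevant monomial coordinates $\mathcal{D}^{(i)}(\mathcal{U}_{ij})$, behaves exactly like an equality-of-shared-variables constraint on \emph{affinely independent} lifted points. Proposition~\ref{prop:affine_indpt} provides precisely the affine independence of the sets $\mathcal{D}^{(i)}(\mathcal{U}_{ij})$, and the Simplex Lemma (Lemma~\ref{lem_simplex}) then tells us the convex multipliers are unique. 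Using this I would invoke Proposition~\ref{prop:decomposibility}: the constraint $\wv^{(i)}_S = \wv^{(j)}_S$ for all $S \subseteq \mathcal{U}_{ij}$ on the affinely independent lifted data is equivalent to taking the convex hull of the integral lifted points that already satisfy $x_v^{(i)} = x_v^{(j)}$ for $v \in \mathcal{U}_{ij}$. One must do this edge-by-edge along the tree; I would process the edges of $\cT$ one at a time (a leaf-stripping / induction on $|E(\cT)|$ argument), using Proposition~\ref{prop:decomposibility} locally on each edge while the already-processed part stays in the desired convex-hull form. Finally projecting onto $\xv$ and commuting projection with convex hull via Proposition~\ref{prop:commute_conv_proj} yields a point of $\mathcal{R}(\mathcal{U},\cT)$.

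The main obstacle I anticipate is the edge-by-edge bookkeeping: Proposition~\ref{prop:decomposibility} is stated for a single shared block of coordinates between two sets, but in a tree a node $i$ may be incident to several edges, each forcing equalities on a different subset $\mathcal{U}_{ij}$, and the affine-independence hypothesis must be maintained when these are combined. The fix is to use the tree structure crucially — since $\cT$ is a tree, removing a leaf edge $(i,j)$ decouples the problem into the leaf $j$ (whose only constraint is along that edge) and the rest, so that the pairwise form of Proposition~\ref{prop:decomposibility} applies with $\mathcal{M}_1$ being the (already-in-convex-hull-form) feasible set of the rest restricted appropriately and $\mathcal{M}_2$ the integral set for the leaf. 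The affine independence needed for the leaf side is exactly Proposition~\ref{prop:affine_indpt}; for the ``rest'' side one needs that the shared coordinates $\{(\wv^{(i)})_S\}_{S \subseteq \mathcal{U}_{ij}}$ still take affinely independent values among the integral points of the rest, which again follows because these are monomial values of $0/1$ vectors and $\mathcal{U}$ is down-closed (Assumption~\ref{asp:tree_down_close}) — one should make sure to carry the down-closedness hypothesis through every application, since it is what guarantees all the needed lower-order monomials are present as coordinates. I expect the rest of the argument to be routine set manipulation once this inductive scaffolding is in place.
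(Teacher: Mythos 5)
Your proposal is correct and follows essentially the same route as the paper's proof: the easy inclusion by lifting integral points to their monomial vectors, and the hard inclusion by leaf-stripping induction on the tree, applying Proposition~\ref{prop:decomposibility} on the leaf edge with the affine independence supplied by Proposition~\ref{prop:affine_indpt} (where the paper, like you, notes $\mathcal{D}^{(1)}(\mathcal{U}_{12})=\mathcal{D}^{(2)}(\mathcal{U}_{12})$ so both sides of the shared coordinates are affinely independent), and finally commuting projection with convex hull via Proposition~\ref{prop:commute_conv_proj}. The obstacle you anticipate and your fix (a leaf edge decouples the leaf from the rest, so the pairwise form of Proposition~\ref{prop:decomposibility} suffices) is exactly how the paper handles it.
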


\begin{proof}
Let \begin{align*}
    \mathcal{R}(\mathcal{U},\mathcal{T}) := \conv\left\{ (\xv,\wv) \;\middle\vert\;
   \begin{array}{@{}l@{}} 
 (\xv^{(i)},\wv^{(i)}) \in \mathcal{X}^{(i)}_M,\forall i \in V(\mathcal{T}), \\ 
x_v^{(i)} = x_v^{(j)},\forall v \in \mathcal{U}_{ij},\forall (i,j) \in E(\mathcal{T})  \\ 
   \end{array} 
\right\}.
\end{align*}
We first prove that $\mathcal{R}(\mathcal{U},\mathcal{T}) = \mathcal{H}(\mathcal{U},\cT)$.
Without loss of generality, we may assume that $\CT$ is connected.
We first prove that $\mathcal{R}(\mathcal{U},\mathcal{T}) \subseteq \mathcal{H}(\mathcal{U},\cT)$.
Consider any $(\xv_*,\wv_*) \in \mathcal{R}(\mathcal{U},\mathcal{T})$, by its definition, there exists $\lambda \in \Delta^r$ such that
\begin{align*}
    & (\xv_*,\wv_*) = \sum_{l \in [r]} \lambda_l (\xv_l,\wv_l), \\
    & (\xv^{(i)}_l,\wv^{(i)}_l) \in \mathcal{X}^{(i)}_M,\forall i \in V(\CT),\forall l \in [r], \\
    & (\xv_l^{(i)})_v = (\xv_l^{(j)})_v,\forall v \in \mathcal{U}_{ij},\forall (i,j) \in E(\mathcal{T}),\forall l \in [r].
\end{align*}
It is straightforward to see that $(\xv_*^{(i)},\wv^{(i)}_*) \in \conv\{\mathcal{X}^{(i)}_{M}\},\forall i \in V(\cT)$. Moreover, since $(\xv_l^{(i)})_v = (\xv_l^{(j)})_v,\forall v \in \mathcal{U}_{ij},\forall (i,j) \in E(\mathcal{T}),\forall l \in [r]$, this implies that $(\wv^{(i)}_l)_S = (\wv^{(j)}_l)_S, \forall (i,j) \in E(\mathcal{T}), S \subseteq \mathcal{U}_{ij},\forall l \in [r]$ and this further proves $(\wv^{(i)}_*)_S = (\wv^{(j)}_*)_S, \forall (i,j) \in E(\mathcal{T}), S \subseteq \mathcal{U}_{ij}$ as $\wv_*^{(i)}$ is the convex combination of $\wv_l^{(i)}$.
This conclude that $(\xv_*,\wv_*) \in \mathcal{H}(\mathcal{U},\mathcal{T})$.

To prove the reverse direction, we apply induction on $|V(\CT)|$. When $\CT$ is a single node, the statement holds trivially. Suppose the statement holds for all $\CT$ such that $|V(\CT)| \leq q$, we aim to prove the statement holds for $\CT$ such that $|V(\CT)| = q+1$. Since $\CT$ is a connected tree, there must exist a leaf in $\CT$. We denote this node by $1$ and we assume that $(1,2) \in E(\CT)$, we denote the subgraph induced by $V(\CT) \setminus \{1\}$ by $\CT'$, the corresponding index list by $\mathcal{U}' := \{\mathcal{U}'_{ij}: \mathcal{U}'_{ij} = \mathcal{U}_{ij},\forall (i,j) \in E(\CT')\}$ and the associated variable in $\mathcal{H}(\mathcal{U}',\mathcal{T}')$ by $\overline{\xv},\overline{\wv}$ and thus
we may write $\mathcal{H}(U,\mathcal{T})$ as
\begin{equation}
\label{eq:induction_hard_set_M_lag}
    \begin{aligned}
        & (\xv^{(1)},\wv^{(1)}) \in \conv\{\mathcal{X}^{(1)}_M\}, \\
    & (\overline{\xv},\overline{\wv}) \in \mathcal{H}(\mathcal{U}',\mathcal{T}'), \\
    & (\wv^{(1)})_S = (\overline{\wv}^{(2)})_S,\forall S \subseteq \mathcal{U}_{12}.
    \end{aligned}
\end{equation}
By induction hypothesis, we know that $\mathcal{R}(\mathcal{U}',\mathcal{T}') = \mathcal{H}(\mathcal{U}',\mathcal{T}')$. Therefore, we can express (\ref{eq:induction_hard_set_M_lag}) as
\begin{equation}
\label{eq:induction_hard_set_M_lag_new}
    \begin{aligned}
        & (\xv^{(1)},\wv^{(1)}) \in \conv\{\mathcal{X}^{(1)}_M\}, \\
    & (\overline{\xv},\overline{\wv}) \in \mathcal{R}(\mathcal{U}',\mathcal{T}'), \\
    & (\wv^{(1)})_S = (\overline{\wv}^{(2)})_S,\forall S \subseteq \mathcal{U}_{12}.
    \end{aligned}
\end{equation}

It is straightforward to verify that
\begin{align*}
    & \{ \{(\wv^{(1)})_S\}_{S\subseteq \mathcal{U}_{12}} : (\xv^{(1)},\wv^{(1)}) \in \mathcal{X}^{(1)}_{M}\} \subseteq \mathcal{D}^{(1)}(\mathcal{U}_{12}), \\
    & \{ \{(\wv^{(2)})_S\}_{S\subseteq \mathcal{U}_{12}} : (\xv^{(2)},\wv^{(2)}) \in \mathcal{X}^{(2)}_{M}\} \subseteq \mathcal{D}^{(2)}(\mathcal{U}_{12}), \\
    & \mathcal{D}^{(1)}(\mathcal{U}_{12}) = \mathcal{D}^{(2)}(\mathcal{U}_{12}). 
\end{align*}
By Proposition \ref{prop:affine_indpt}, $\mathcal{D}^{(1)}(\mathcal{U}_{12})$, $\mathcal{D}^{(2)}(\mathcal{U}_{12})$ are collections of affinely independent points and $\mathcal{D}^{(1)}(\mathcal{U}_{12}) \cup \mathcal{D}^{(2)}(\mathcal{U}_{12})$ is also a collection of affinely independent points since $\mathcal{D}^{(1)}(\mathcal{U}_{12}) = \mathcal{D}^{(2)}(\mathcal{U}_{12})$.

Therefore applying Proposition \ref{prop:decomposibility} to (\ref{eq:induction_hard_set_M_lag_new}), given any feasible $(\xv_*,\wv_*)$ in (\ref{eq:induction_hard_set_M_lag_new}), it follows that
\begin{align*}
    (\xv_*,\wv_*) \in 
    \conv\left\{ (\xv,\wv) \;\middle\vert\;
   \begin{array}{@{}l@{}} 
 (\xv^{(i)},\wv^{(i)}) \in \mathcal{X}^{(i)}_M,\forall i \in V(\mathcal{T}), \\ 
(\wv^{(1)})_S = (\wv^{(2)})_S,\forall S \subseteq \mathcal{U}_{12}
   \end{array} 
\right\}.
\end{align*}
where 
$\left\{ (\overline{\xv},\overline{\wv}) \;\middle\vert\;
   \begin{array}{@{}l@{}} 
 (\xv^{(i)},\wv^{(i)}) \in \mathcal{X}^{(i)}_M,\forall i \in V(\mathcal{T'}), \\ 
x_v^{(i)} = x_v^{(j)},\forall v \in \mathcal{U}_{ij},\forall (i,j) \in E(\mathcal{T'})  \\ 
   \end{array} 
\right\}$ is $\mathcal{M}_1$ 
and $\mathcal{X}_M^{(1)}$ is $\mathcal{M}_2$, $\{(\overline{\wv}^{(2)})_S\}_{S\subseteq \mathcal{U}_{12}}$ is $\pv^{(1)}$ and $\{(\wv^{(1)})_S\}_{S\subseteq \mathcal{U}_{12}}$ is $\pv^{(2)}$ in the application of Proposition \ref{prop:decomposibility}. 

Since $\{v\} \subseteq \mathcal{U}_{12},\forall v \in \mathcal{U}_{12}$, the equation $(\wv^{(1)}_l)_S = (\overline{\wv}^{(2)}_l)_S,\forall l \in [r],\forall S \subseteq \mathcal{U}_{12}$ (trivially) implies that $(\wv^{(1)}_l)_{\{v\}} = (\xv^{(1)}_l)_v = (\wv^{(2)}_l)_{\{v\}}  = (\xv^{(2)}_l)_v,\forall l \in [r],\forall v \in \mathcal{U}_{12}$. This shows that 
\begin{align*}
    (\xv_*,\wv_*) \in \conv\left\{ (\xv,\wv) \;\middle\vert\;
   \begin{array}{@{}l@{}} 
 (\xv^{(i)},\wv^{(i)}) \in \mathcal{X}^{(i)}_M,\forall i \in V(\mathcal{T}), \\ 
x_v^{(i)} = x_v^{(j)},\forall v \in \mathcal{U}_{ij},\forall (i,j) \in E(\mathcal{T'})
   \end{array} 
\right\} = \mathcal{R}(U,\mathcal{T}),
\end{align*}
and thus
finishes the induction step.

Since $\mathcal{R}(U,\mathcal{T}) = \mathcal{H}(U,\cT)$, by Proposition \ref{prop:commute_conv_proj}, it follows 
\begin{align*}
     \proj_{\xv} \mathcal{H}(U,\cT) & = \proj_{\xv}  \mathcal{R}(U,\cT) \\
     & = \proj_{\xv}  \conv\left\{ (\xv,\wv) \;\middle\vert\;
   \begin{array}{@{}l@{}} 
 (\xv^{(i)},\wv^{(i)}) \in \mathcal{X}^{(i)}_M,\forall i \in V(\CT), \\ 
x_v^{(i)} = x_v^{(j)},\forall v \in \mathcal{U}_{ij},\forall (i,j) \in E(\mathcal{T})  \\ 
   \end{array} 
\right\}  \\
& = \conv \ \proj_{\xv}  \left\{ (\xv,\wv) \;\middle\vert\;
   \begin{array}{@{}l@{}} 
 (\xv^{(i)},\wv^{(i)}) \in \mathcal{X}^{(i)}_M,\forall i \in V(\CT), \\ 
x_v^{(i)} = x_v^{(j)},\forall v \in \mathcal{U}_{ij},\forall (i,j) \in E(\mathcal{T})  \\ 
   \end{array} 
\right\} \\
& = \conv\left\{ \xv \;\middle\vert\;
   \begin{array}{@{}l@{}} 
 \xv^{(i)} \in \mathcal{X}^{(i)},\forall i \in V(\CT), \\ 
x_v^{(i)} = x_v^{(j)},\forall v \in \mathcal{U}_{ij},\forall (i,j) \in E(\mathcal{T})  \\ 
   \end{array} 
\right\}.
\end{align*}

\end{proof}
\textbf{(Proof of Theorem \ref{thm:strong_duality_tree}:)}
\begin{proof}
By Assumption \ref{asp:tree_down_close},
it follows that
\begin{align*}
    \mathcal{A}(\mathcal{S},\mathcal{T}) & = \proj_{\xv}\bigcap_{\mathcal{U} \unlhd \mathcal{S}}  \left\{ (\xv,\wv) \;\middle\vert\;
   \begin{array}{@{}l@{}} 
 (\xv^{(i)},\wv^{(i)}) \in \conv\{\mathcal{X}^{(i)}_{M}\},\forall i \in V(\CT), \\
(\wv^{(i)})_S = (\wv^{(j)})_S, \forall (i,j) \in E(\mathcal{T}), S \subseteq \mathcal{U}_{ij} \\
   \end{array}
\right\} \\
& = \proj_{\xv}\bigcap_{\mathcal{U} \unlhd \mathcal{S}} \mathcal{H}(\mathcal{U},\mathcal{T}) \\
& \subseteq \bigcap_{\mathcal{U} \unlhd \mathcal{S}} \proj_{\xv} \mathcal{H}(\mathcal{U},\mathcal{T}) \\
& = \mathcal{B}(\mathcal{S},\mathcal{T})
\end{align*}
where the containment is implied by Proposition \ref{prop:proj_intersection} and the last equality is implied by  Lemma \ref{lem_single_char}.
\end{proof}

\section{M-Lagrangian for multi-block packing and covering problems}\label{sec:M-Lagrangian for multi-block}

This section we aim to prove Theorem \ref{thm_two_block_packing_bound}, Theorem \ref{thm_two_block_covering_bound}, Theorem \ref{thm_Hierarchical_bound_for_packing} and Theorem \ref{thm_Hierarchical_bound_for_covering} and Corollary \ref{cor:application_for_two_stage}.  We comment that Theorem \ref{thm_two_block_packing_bound} and Theorem \ref{thm_two_block_covering_bound} can be regarded as special cases of Theorem \ref{thm_Hierarchical_bound_for_packing} and Theorem \ref{thm_Hierarchical_bound_for_covering} respectively,  where the underlying graph is a single edge. Thus, it suffices to prove Theorem  \ref{thm_Hierarchical_bound_for_packing} and Theorem \ref{thm_Hierarchical_bound_for_covering}.

For packing problems, we first prove the following Lemma, which relates any solution $\xv \in \mathcal{B}_k(\CT)$ to some feasible solution in (\ref{tree_prob}). 

\begin{lemma}
    \label{lem_round}
    For packing problems, under the assumption \ref{asp_local_recourse}, 
    given any good $\mathcal{W}$ and any solution $\xv \in \mathcal{B}_k(\CT)$, one can construct a feasible solution $\xv_*$ of the MIP, that is $\xv_*$ belongs to feasible region of  (\ref{tree_prob}) such that
    \begin{align*}
        \sum_{v \in \mathcal{W}} \sum_{i \in V(\CT) : v \in \CQ_i} c^{(i)}_v \cdot (\xv_*^{(i)})_v \leq \sum_{v \in \mathcal{W}} \sum_{i \in V(\CT) : v \in \CQ_i} c^{(i)}_v \cdot (\xv^{(i)})_v.
    \end{align*}
    As a consequence, since $c^{(i)}$ is non-positive we have that 
    $$\OPT \leq \sum_{v \in \mathcal{W}} \sum_{i \in V(\CT) : v \in \CQ_i} c^{(i)}_v \cdot (\xv^{(i)})_v.$$
\end{lemma}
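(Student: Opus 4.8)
The plan is to build $\xv_*$ component-by-component on the connected components $\mathcal{C}$ of $\CT(\mathcal{W})$, using the two alternative defining properties of a good set, and then fill in the remaining coordinates (those outside $\mathcal{W}$, or in bags not touching $\mathcal{W}$) using relatively complete recourse (Assumption~\ref{asp_local_recourse}). Fix a connected component $\mathcal{C}$ of $\CT(\mathcal{W})$. Since $\xv \in \mathcal{B}_k(\CT) = \bigcap_{\mathcal{U}\unlhd\mathcal{S}}\conv\{\cdots\}$, in particular $\xv$ lies in the convex hull of integral points that agree across each edge $(i,j)$ on $\CQ_i\cap\CQ_j\cap\mathcal{W}$ when $|\CQ_i\cap\CQ_j\cap\mathcal{W}|\le k$ (case~(1)), or on $\CQ_i\cap\mathcal{W}$ contained in a single hub bag $\CQ_{i_0}$ (case~(2)); in either case we may choose the index list $\mathcal{U}$ so that the relevant intersections with $\mathcal{W}$ are all included in $\mathcal{S}$. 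Write $\xv|_{\mathcal{C}} = \sum_{\ell}\alpha_\ell\, \xv_\ell$ as a convex combination of such integral points restricted to the bags of $\mathcal{C}$.

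The key step is then an averaging/selection argument: since the objective $\sum_{v\in\mathcal{W}}\sum_{i: v\in\CQ_i} c^{(i)}_v (\xv^{(i)})_v$ restricted to component $\mathcal{C}$ is linear in the $\xv_\ell$, there exists some $\ell^*$ with $\sum_{v\in\mathcal{W}\cap(\bigcup_{i\in V(\mathcal{C})}\CQ_i)}\sum_{i\in V(\mathcal{C}): v\in\CQ_i} c^{(i)}_v(\xv_{\ell^*}^{(i)})_v \le \sum_{v}\sum_{i} c^{(i)}_v(\xv^{(i)})_v$. The point $\xv_{\ell^*}$ is a genuine integral assignment to the variables in the bags of $\mathcal{C}$, consistent along every edge of $\mathcal{C}$ on $\CQ_i\cap\CQ_j\cap\mathcal{W}$ (case~1) or globally consistent on $\CQ_i\cap\mathcal{W}$ via the hub (case~2). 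I would then argue that this local assignment can be extended to a full feasible $\xv_*$ of (\ref{tree_prob}): since $\CT$ is a tree and the components $\mathcal{C}$ of $\CT(\mathcal{W})$ are vertex-disjoint subtrees, one processes the tree from these components outward, and at each new bag one invokes Assumption~\ref{asp_local_recourse} to complete the local variable assignment while matching the already-fixed overlap variables; the coordinates of $\xv_*$ on $\mathcal{W}$ are taken from the $\xv_{\ell^*}$'s. Summing the per-component inequalities gives the claimed inequality, and non-positivity of $\cv^{(i)}$ then yields $\OPT \le \sum_{v\in\mathcal{W}}\sum_{i: v\in\CQ_i} c^{(i)}_v (\xv^{(i)})_v$ because the constructed $\xv_*$ is feasible and its objective is at most the displayed quantity (the terms indexed by variables outside $\mathcal{W}$ only decrease the objective).

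The main obstacle I anticipate is case~(1): when the component $\mathcal{C}$ has many bags, the per-edge consistency of $\xv_{\ell^*}$ on $\CQ_i\cap\CQ_j\cap\mathcal{W}$ does \emph{not} by itself guarantee that the various $\xv_{\ell^*}$ can be glued into one coherent assignment on all of $\bigcup_{i\in V(\mathcal{C})}\CQ_i\cap\mathcal{W}$ — one needs that the single convex-combination decomposition was taken over points that are simultaneously consistent along every edge of $\mathcal{C}$, which is exactly what membership in $\mathcal{B}_k(\CT)$ (for the index list $\mathcal{U}$ with $\mathcal{U}_{ij}\supseteq \CQ_i\cap\CQ_j\cap\mathcal{W}$ on $\mathcal{C}$) provides, since $\mathcal{B}$ is an intersection over all such $\mathcal{U}$ and each term is a convex hull of globally-edge-consistent integral points. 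Care is also needed that the intersection sizes $|\CQ_i\cap\CQ_j\cap\mathcal{W}|\le k$ genuinely let us pick $\mathcal{U}_{ij}\in\mathcal{S}_{ij}=\binom{\CQ_{ij}}{\le k}$ containing $\CQ_i\cap\CQ_j\cap\mathcal{W}$; this is where the definition of $k$-good/good is used, and in case~(2) one instead selects $\mathcal{U}_{ij}=\CQ_j\cap\mathcal{W}\subseteq\CQ_{i_0}\cap\mathcal{W}$, which may exceed $k$ but is still a legitimate index set when $\mathcal{S}_{ij}=2^{\CQ_{ij}}$ — so I would note the proof of this Lemma actually uses $\mathcal{B}_k$ for good (not just $k$-good) $\mathcal{W}$ precisely because case~(2) is available in the packing analysis.
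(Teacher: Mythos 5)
Your overall strategy---average over a convex decomposition of $\xv$ induced by a well-chosen index list $\mathcal{U}\unlhd\mathcal{S}^k$, select a good integral point by linearity, then extend it to a feasible point of (\ref{tree_prob})---matches the paper's, and you correctly identify that the single simultaneous decomposition provided by membership in $\mathcal{B}_k(\CT)$ is what makes the gluing in case~(1) work. However, your treatment of case~(2) has a genuine gap. In this lemma $\mathcal{S}_{ij}=\binom{\CQ_{ij}}{\leq k}$, so the index set $\mathcal{U}_{ij}=\CQ_j\cap\mathcal{W}$ you propose on a type-2 component is simply not admissible when $|\CQ_j\cap\mathcal{W}|>k$: the intersection defining $\mathcal{B}_k(\CT)$ ranges only over $\mathcal{U}\unlhd\mathcal{S}^k$, and there is no ``$\mathcal{S}_{ij}=2^{\CQ_{ij}}$'' available here. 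Consequently you cannot conclude that the integral points in the decomposition are consistent along the edges of a type-2 component, and your selected $\xv_{\ell^*}$ need not glue there. The paper's fix is different: it sets $\mathcal{U}^*_{st}=\emptyset$ on type-2 edges, uses the fact that the \emph{fractional} point $\xv\in\mathcal{B}_k(\CT)$ already satisfies $x^{(i)}_v=x^{(j)}_v$ on all shared variables (because $k\geq 1$, so singletons are in every $\mathcal{S}_{ij}$) to rewrite the $\mathcal{W}$-part of the objective in terms of the hub block $\rho(i)$, then averages, and finally defines $(\xv_*^{(i)})_v:=(\xv_{l_*}^{(\rho(i))})_v$ for $v\in\mathcal{W}$, i.e.\ it copies the hub's integral values into every block of the component. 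Feasibility of the non-hub blocks under this copying is exactly where Assumption~\ref{asp_local_recourse} enters.

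A second, smaller issue: your extension step (``process the tree outward and invoke Assumption~\ref{asp_local_recourse} at each new bag to match already-fixed overlaps'') asks more of the assumption than it provides---it guarantees a global completion agreeing with a \emph{single} block's assignment, not a completion consistent with partial assignments already fixed on several bags. The paper avoids this entirely by setting every coordinate outside $\mathcal{W}$ to zero and using the packing structure ($A^{(i)}$, $\bv^{(i)}$ entrywise non-negative, variables non-negative) to check that zeroing out variables preserves feasibility of each block; this is also where the hypothesis ``packing'' is actually used, which your sketch never invokes for feasibility. With these two repairs your argument becomes the paper's proof.
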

\begin{proof}
Let $\mathcal{V} := \{ i \in V(\CT) : \CQ_i \cap  \mathcal{W} \neq \emptyset\}$. Consider the sub-graph $\CT(\mathcal{V})$ of $\CT$ induced by $\mathcal{V}$ and the collection of connected components $\{\mathcal{C}_\gamma\}_{\gamma=1}^r$ of $\CT(\mathcal{V})$. By definition of $\mathcal{W}$ being good, each $\mathcal{C}_{\gamma}$ satisfies one of the following:
\begin{enumerate}
    \item For any $(s,t) \in E(\mathcal{C}_\gamma), |\CQ_s \cap \CQ_t \cap \mathcal{W}| \leq k$. In this case, for all $j \in V(\mathcal{C}_{\gamma})$, let $\rho(j) = j$.
        \item There exists $s \in V(\mathcal{C}_\gamma)$ such that $(\CQ_j \cap \mathcal{W}) \subseteq (\CQ_s \cap \mathcal{W}),\forall j \in V(\mathcal{C}_\gamma)$. In this case, for all $j \in V(\mathcal{C}_{\gamma})$, let $\rho(j) = s$.
\end{enumerate}
If $\mathcal{C}_\gamma$ satisfies condition (1.), we call it \textit{type-1} and if $\mathcal{C}_\gamma$ satisfies condition (2.), we call it \textit{type-2}.
Since $v \in \mathcal{W}$ appears only in one connected component, we can identify the type of $v \in V(\mathcal{G}) \cap \mathcal{W}$ by saying $v$ is \textit{type-1} if $\{i : v \in \CQ_{i}\} \subseteq V(\mathcal{C}_{\gamma})$ for some type-1 $\mathcal{C}_{\gamma}$ and  $v$ is \textit{type-2} if $\{i : v \in \CQ_{i}\} \subseteq V(\mathcal{C}_{\gamma})$ for some type-2 $\mathcal{C}_{\gamma}$.

We let $\mathcal{P}_\gamma := \bigcup_{i \in V(\mathcal{C}_\gamma)} \CQ_i$, where $\mathcal{P}_\gamma$ is the collection of variables included in $\mathcal{C}_\gamma$. By the definition of tree decomposition, for each $v \in V(G)$, the set $\{i \in V(\mathcal{\mathcal{T}}) : v \in \mathcal{Q}_i \}$ forms a sub-tree of $\mathcal{T}$, this implies that 
$\mathcal{P}_{\gamma_1} \cap \mathcal{P}_{\gamma_2} = \emptyset,\forall \gamma_1 \neq \gamma_2.$ 
Moreover, by the definition of tree decomposition,  if $(u,v) \in E(\mathcal{G})$, then there exists $t \in V(\mathcal{T})$ such that $u,v \in \mathcal{Q}_t$. Therefore, by the construction of $\{\mathcal{P}_\gamma\}$, there is no constraint in (\ref{general_mip}) that involves variables from distinct pair of $\{\mathcal{P}_\gamma\}$.

Recall that 
\begin{align*}
    \mathcal{B}_k(\mathcal{T}) = \bigcap_{U \unlhd \mathcal{S}^k}  \conv \left\{ \xv \;\middle\vert\;
   \begin{array}{@{}l@{}} 
 \xv^{(i)} \in \mathcal{X}^{(i)},\forall i \in V(\mathcal{T}), \\
         x^{(i)}_v = x^{(j)}_v,\forall (i,j) \in E(\mathcal{T}),\forall v \in \mathcal{U}_{ij}.
   \end{array}
\right\}
\end{align*}
For any $\xv \in \mathcal{B}_k(\CT)$, since $k \geq 1$, it follows that $\xv^{(i)}_v = \xv^{(j)}_v,\forall v \in \CQ_{ij}$. If $v \in \mathcal{W}$ is \emph{type-2}, and $v \in \CQ_i$, this implies that  $\xv^{(i)}_v = \xv^{(\rho(i))}_v$
Also, if $v \in \mathcal{W}$ is \emph{type-1}, and $v \in \CQ_i$,
then trivially from the construction of $\rho$ we have that $\xv^{(i)}_v = \xv^{(\rho(i))}_v$.

This implies that
\begin{equation}
    \begin{aligned}
        \sum_{v \in \mathcal{W}} \sum_{i \in V(\CT) : v \in \CQ_i} c^{(i)}_v \cdot (\xv^{(i)})_v & =  \sum_{v \in \mathcal{W}} \sum_{i \in V(\CT) : v \in \CQ_i} c^{(i)}_v \cdot (\xv^{(\rho(i))})_v
    \end{aligned}
\end{equation}
For any $\xv \in \mathcal{B}_k(\CT)$, we choose $\mathcal{U}^*$ such that for each $(s,t) \in E(\CT)$,
$$\mathcal{U}_{st}^* := \begin{cases}
        \CQ_s \cap \CQ_t \cap \mathcal{W}  & \text{ if } (s,t) \in E(\mathcal{C}_\gamma) \text{ for some type-1 } \mathcal{C}_\gamma \\
     \emptyset & \text{ if otherwise}
    \end{cases}.$$
 Let $\mathcal{S}^k = \{ \mathcal{S}_{ij} : \mathcal{S}_{ij} = \binom{\CQ_{ij}}{\leq k},\forall (i,j) \in E(\CT)\}$.    
By definition of
good subsets and construction of $\mathcal{U}^*$ we have that 
$\mathcal{U}^* \unlhd \mathcal{S}^k$. Since $\xv \in \mathcal{B}_k(\CT)$ it follows that
$$\xv \in \conv \left\{ \xv \;\middle\vert\;
   \begin{array}{@{}l@{}} 
 \xv^{(i)} \in \mathcal{X}^{(i)},\forall i \in V(\mathcal{T}), \\
         x^{(i)}_v = x^{(j)}_v,\forall (i,j) \in E(\mathcal{T}),\forall v \in \mathcal{U}_{ij}^*.
   \end{array}
\right\}.$$
Therefore there exist $\lambda \in \Delta^w$ such that
\begin{align*}
     & \xv = \sum_{l \in [w]} \lambda_l \xv_l, \\
    & \xv_l^{(i)} \in \mathcal{X}^{(i)},\forall l \in [w], \\
    & (\xv_l^{(i)})_v = (\xv_l^{(j)})_v,\forall v \in \mathcal{U}_{ij}^*,\forall (i,j) \in E(\mathcal{T}).
\end{align*}

As $\xv$ is a convex combination of $\{\xv_l\}_{l=1}^w$, then there exists some $l_* \in \{1,\dots,w\}$ such that 
\begin{equation}
    \begin{aligned}
    \label{eq:convex_better}
         \sum_{v \in \mathcal{W}} \sum_{i \in V(\CT) : v \in \CQ_i} c^{(i)}_v \cdot (\xv_{l_*}^{(\rho(i))})_v & \leq \sum_{v \in \mathcal{W}} \sum_{i \in V(\CT) : v \in \CQ_i} c^{(i)}_v \cdot (\xv^{(\rho(i))})_v. \\
         & = \sum_{v \in \mathcal{W}} \sum_{i \in V(\CT) : v \in \CQ_i} c^{(i)}_v \cdot (\xv^{(i)})_v,
    \end{aligned}
\end{equation}
where again the last equality follows from the fact that $\xv \in \mathcal{B}_k(\CT)$ and since $k \geq 1$, it follows that $\xv^{(i)}_v = \xv^{(j)}_v,\forall v \in \CQ_{ij}$.

Now we choose $\xv_*$ in the following way:
\begin{align*}
    (\xv_*^{(i)})_v = \begin{cases}
        (\xv_{l_*}^{(\rho(i))})_v & \text{ if } v \in\mathcal{W} \\
        0 & \text{ if } v \not\in \mathcal{W}. 
    \end{cases}
\end{align*}
First, it is straightforward to see that 
\begin{align*}
    \sum_{v \in \mathcal{W}} \sum_{i \in V(\CT) : v \in \CQ_i} c^{(i)}_v \cdot (\xv_*^{(i)})_v  & = \sum_{v \in \mathcal{W}} \sum_{i \in V(\CT) : v \in \CQ_i} c^{(i)}_v \cdot (\xv_{l_*}^{(\rho(i))})_v  \\  
    & \leq \sum_{v \in \mathcal{W}} \sum_{i \in V(\CT) : v \in \CQ_i} c^{(i)}_v \cdot (\xv^{(i)})_v.
\end{align*}
where the last inequality comes from (\ref{eq:convex_better}).

It remains to show that $\xv_*$ is a feasible solution of (\ref{tree_prob}).

Note that $\xv_*$ satisfies all coupling constraints in (\ref{tree_prob}). For other constraints from $A\xv \leq \bv$ in (\ref{general_mip}), each such constraint is placed in some $X^{(i)}$
and it takes form of $A^{(i)} \xv^{(i)} \leq \bv^{(i)}$. To show $\xv_*$ is a feasible solution of (\ref{tree_prob}), it suffices to verify for all the constraints inside every block $\mathcal{X}^{(i)}$ is satisfied.
As discussed above, there is no constraint in (\ref{general_mip}) that involves variables from distinct pairs of $\{\mathcal{P}_\gamma\}$. So there are three cases for each $\mathcal{X}^{(i)}$ (we remind the reader that $r$ is the number of connected components):
\begin{itemize}
    \item ($\CQ_i \cap \mathcal{P}_\gamma = \emptyset,\forall \gamma \in [r]$): in this case, all variables in $\mathcal{X}^{(i)}$ are set to $0$ and therefore $ A^{(i)} \xv^{(i)} \leq \bv^{(i)}$ are satisfied since the problem type is packing.
    \item ($\CQ_i \subseteq \mathcal{P}_{\gamma^*} \text{ for some } \gamma^* \in [r]$ and $\mathcal{C}_{\gamma^*}$ is type-1): We remind that $\xv_{l_*}^{(i)} \in \mathcal{X}^{(i)}$, it therefore follows that
    \begin{align*}
        \sum_{v \in \CQ_i} A^{(i)}_v (\xv_{l_*}^{(i)})_v \leq \bv^{(i)}.
    \end{align*}
    By the way we construct $\xv_*^{(i)}$, it follows that
    \begin{align*}
            \sum_{v \in \CQ_i} A^{(i)}_v (\xv_{*}^{(i)})_v = \sum_{v \in \CQ_i \cap \mathcal{W}} A^{(i)}_v (\xv_{l_*}^{(i)})_v \leq \sum_{v \in \CQ_i} A^{(i)}_v (\xv_{l_*}^{(i)})_v \leq \bv^{(i)},
    \end{align*}
    where the last two inequalities come from the fact that the problem is packing.
    \item ($\CQ_i \subseteq \mathcal{P}_{\gamma^*}\text{ for some } \gamma^* \in [r]$ and $\mathcal{C}_{\gamma^*}$ is type-2): 
    By Assumption \ref{asp_local_recourse}, 
    it follows
    \begin{align*}
        \sum_{v \in \CQ_i \cap \CQ_{(\rho(i))}} A^{(i)}_v (\xv_{l_*}^{(\rho(i))})_v \leq \bv^{(i)}.
    \end{align*}
    Since the problem is packing, this further implies 
        \begin{align*}
        \sum_{v \in \CQ_i \cap \CQ_{(\rho(i))} \cap \mathcal{W}} A^{(i)}_v (\xv_{l_*}^{(\rho(i))})_v \leq \bv^{(i)}.
    \end{align*}
    By the way we construct $\xv_*^{(i)}$,  it follows that  
    \begin{align*}
        \sum_{v \in \CQ_i } A^{(i)}_v (\xv_{*}^{(i)})_v & =  \sum_{v \in \CQ_i \cap \mathcal{W}} A^{(i)}_v (\xv_{l_*}^{(\rho(i))})_v \\
        & =  \sum_{v \in \CQ_i \cap \CQ_{(\rho(i))} \cap \mathcal{W}} A^{(i)}_v (\xv_{l_*}^{(\rho(i))})_v \\
        & \leq \bv^{(i)}
    \end{align*}
    where the second equation comes from that fact that $\CQ_i \cap \mathcal{W} \subseteq \CQ_{(\rho(i))} \cap \mathcal{W}$.
\end{itemize}
Consequently, $\xv_*$ is a feasible solution in (\ref{tree_prob}) with the desired property.
\end{proof}

\textbf{(Proof of Theorem \ref{thm_Hierarchical_bound_for_packing}):}
\begin{proof}
Give any good $\mathcal{W}$ and any $i \in V(\CT)$, we define
\begin{align*}
   \chi_\mathcal{W}^{(i)} \in \{0,1\}^{\CQ_i} : (\chi_\mathcal{W}^{(i)})_v = \begin{cases}
        1 & \text{ if } v \in \CQ_i \cap \mathcal{W} \\
        0 & \text{ if } v \in \CQ_i \setminus \mathcal{W}.
    \end{cases}
\end{align*}
We also use $\ev^{(i)}$ to denote a vector of all ones with dimension $|\CQ_i|$.

Given any solution in (\ref{multi_block_packing_bound}), it is straightforward to see that
\begin{align*}
    \sum_{\mathcal{W} : \mathcal{W} \text{ is good} } \alpha_{\mathcal{W}} \chi_{\mathcal{W}} \geq \textbf{1} \implies  \sum_{\mathcal{W} : \mathcal{W} \text{ is good} } \alpha_{\mathcal{W}} \chi_{\mathcal{W}}^{(i)} \geq \ev^{(i)},\forall i \in V(\CT).
\end{align*}
Let $\circ$ denote pairwise product. 
Now consider any optimal solution $\xv$ of (\ref{primal_char_ex_tree_prob}) and let $\alpha^*$ denote the optimal solution of (\ref{multi_block_packing_bound}),
it follows that
\begin{align*}
    \LAG^{M}_k & = \sum_{i \in V(\CT)} \iprod{\cv^{(i)}}{\xv^{(i)}} \\
    & = \sum_{i \in V(\CT)} \iprod{\cv^{(i)}}{\ev^{(i)} \circ \xv^{(i)}} \\
    & \geq  \sum_{i \in V(\CT)} \iprod{\cv^{(i)}}{\left(\sum_{\mathcal{W} : \mathcal{W} \text{ is good} } \alpha_{\mathcal{W}}^* \chi_{\mathcal{W}}^{(i)}\right) \circ \xv^{(i)}} \\
    & =  \sum_{\mathcal{W}: \mathcal{W} \text{ is good}} \alpha_{\mathcal{W}}^* \sum_{i \in V(\CT)} \iprod{\cv^{(i)}}{\chi_{\mathcal{W}}^{(i)} \circ \xv^{(i)}} \\
    & \geq \left(\sum_{\mathcal{W}: \mathcal{W} \text{ is good}} \alpha_{\mathcal{W}}^*\right) \cdot \OPT \\
    & = \eta_k \cdot \OPT
\end{align*}
where the first inequality comes from the fact that the problem is packing, $\cv^{(i)}$ is non-positive and $\sum_{\mathcal{W} : \mathcal{W} \text{ is good} } \alpha_{\mathcal{W}}^* \chi_{\mathcal{W}}^{(i)} \geq \ev^{(i)},\forall i \in V(\CT)$; the second inequality is the consequence of Lemma \ref{lem_round} and $\alpha^* \geq 0$.
\end{proof}

\textbf{(Proof of Theorem \ref{thm_Hierarchical_bound_for_covering}):}
\begin{proof}
Give any $k$-good $\mathcal{W}$ and any $i \in V(\mathcal{T})$, we let
\begin{align*}
   \chi_\mathcal{W}^{(i)} \in \{0,1\}^{\CQ_i} : (\chi_\mathcal{W}^{(i)})_v = \begin{cases}
        1 & \text{ if } v \in \CQ_i \cap \mathcal{W} \\
        0 & \text{ if } v \in \CQ_i \setminus \mathcal{W}.
    \end{cases}
\end{align*}
We also use $\ev^{(i)}$ to denote a vector of all ones with dimension $|\CQ_i|$.

Given any solution in (\ref{multi_block_covering_bound}), it is straightforward to see that
\begin{align*}
    \sum_{\mathcal{W} : \mathcal{W} \text{ is $k$-good} } \alpha_{\mathcal{W}} \chi_{\mathcal{W}} \geq \textbf{1} \implies  \sum_{\mathcal{W} : \mathcal{W} \text{ is $k$-good} } \alpha_{\mathcal{W}} \chi_{\mathcal{W}}^{(i)} \geq \ev^{(i)},\forall i \in V(\CT).
\end{align*}

Now consider any optimal solution $\xv_*$ in (\ref{primal_char_ex_tree_prob}) and let $\alpha^*$ denote the optimal solution in (\ref{multi_block_covering_bound}), it follows that
\begin{align*}
    \LAG_k^M & = \sum_{i \in V(\CT)} \iprod{\cv^{(i)}}{\xv^{(i)}_*} \\
    & = \sum_{i \in V(\CT)} \iprod{\cv^{(i)}}{\ev^{(i)} \circ \xv^{(i)}_*} \\
    & \leq  \sum_{i \in V(\CT)} \iprod{\cv^{(i)}}{\left(\sum_{\mathcal{W} : \mathcal{W} \text{ is $k$-good} } \alpha_{\mathcal{W}}^* \chi_{\mathcal{W}}^{(i)}\right) \circ \xv^{(i)}_*} \\
    & =  \sum_{\mathcal{W}: \mathcal{W} \text{ is $k$-good}} \alpha_{\mathcal{W}}^* \sum_{i \in V(\CT)} \iprod{\cv^{(i)}}{\chi_{\mathcal{W}}^{(i)} \circ \xv^{(i)}_*} \\
\implies & \dfrac{\LAG_k^M}{\theta_k} \leq \sum_{\mathcal{W}: \mathcal{W} \text{ is $k$-good}} \dfrac{\alpha_{\mathcal{W}}^*}{\theta_k} \sum_{i \in V(\CT)} \iprod{\cv^{(i)}}{\chi_{\mathcal{W}}^{(i)} \circ \xv^{(i)}_*}
\end{align*}

Since $\dfrac{\alpha_{\mathcal{W}}^*}{\theta_k} \geq 0 \text{ for all $k$-good $\mathcal{W}$ and } \sum_{\mathcal{W}: \mathcal{W} \text{ is $k$-good}} \dfrac{\alpha_{\mathcal{W}}^*}{\theta_k} = 1
$, then it follows that there exists a $k$-good $\mathcal{W}^*$ such that $\sum_{i \in V(\CT)} \iprod{\cv^{(i)}}{\chi_{\mathcal{W^*}}^{(i)} \circ \xv^{(i)}_*} \geq \dfrac{\LAG_k^M}{\theta_k}$.

Let $\mathcal{V} := \{ i \in V(\CT) : \CQ_i \cap  \mathcal{W}^* \neq \emptyset\}$. Consider the subgraph $\CT(\mathcal{V})$ of $\CT$ induced by $\mathcal{V}$ and the collection of connected components $\{\mathcal{C}_\gamma\}_{\gamma=1}^r$ of $\CT(\mathcal{V})$.  
We first choose $\mathcal{U}^*$ such that for each $(s,t) \in E(\CT)$,
$$\mathcal{U}_{st}^* := \begin{cases}
        \CQ_s \cap \CQ_t \cap \mathcal{W}^*  & \text{ if } (s,t) \in E(\mathcal{C}_\gamma) \\
     \emptyset & \text{ if otherwise}
    \end{cases}.$$
Let $\mathcal{S}^k = \{ \mathcal{S}_{ij} : \mathcal{S}_{ij} = \binom{\CQ_{ij}}{\leq k},\forall (i,j) \in E(\CT)\}$.    
Recall that 
\begin{align*}
    \mathcal{B}_k(\mathcal{T}) = \bigcap_{U \unlhd \mathcal{S}^k}  \conv \left\{ \xv \;\middle\vert\;
   \begin{array}{@{}l@{}} 
 \xv^{(i)} \in \mathcal{X}^{(i)},\forall i \in V(\mathcal{T}), \\
         x^{(i)}_v = x^{(j)}_v,\forall (i,j) \in E(\mathcal{T}),\forall v \in \mathcal{U}_{ij}.
   \end{array}
\right\}.
\end{align*}
Since $\xv_* \in \mathcal{A}_k(\CT) \subseteq \mathcal{B}_k(\CT)$ and $\mathcal{U}^* \unlhd \mathcal{S}^k$ as $\mathcal{W}^* \text{ is $k$-good}$,
it, therefore, follows that
$$\xv_* \in \conv \left\{ \xv \;\middle\vert\;
   \begin{array}{@{}l@{}} 
 \xv^{(i)} \in \mathcal{X}^{(i)},\forall i \in V(\mathcal{T}), \\
         x^{(i)}_v = x^{(j)}_v,\forall (i,j) \in E(\mathcal{T}),\forall v \in \mathcal{U}_{ij}^*.
   \end{array}
\right\}.$$
Moreover since $\xv_*$ is feasible in (\ref{primal_char_ex_tree_prob}), then it follows that
\begin{equation}
    \begin{aligned}
        \label{eq:tree_all_same}
        (\xv_*^{(i)})_v = (\xv_*^{(j)})_v,\forall i,j \in V(\CT) : v \in \CQ_{ij}.
    \end{aligned}
\end{equation}

By definition, there exist $\lambda \in \Delta^w$ such that
\begin{equation}
    \begin{aligned}
    \label{eq:dfn_xv*}
    & \xv_* = \sum_{l \in [w]} \lambda_l \xv_l, \\
    & \xv_l^{(i)} \in \mathcal{X}^{(i)},\forall l \in [w], \\
    & (\xv_l^{(i)})_v = (\xv_l^{(j)})_v,\forall v \in \mathcal{U}_{ij}^*,\forall (i,j) \in E(\mathcal{T}).
    \end{aligned}
\end{equation}



Now we construct $\overline{\xv}_l$ in the following way:
\begin{align*}
    (\overline{\xv}_l)_v = \begin{cases}
        (\xv_{l}^{(i)})_v & \text{ if } v \in \mathcal{W}^*, \\
        \max_{i : v \in \CQ_i}\{(\xv_{l}^{(i)})_v\} & \text{ if otherwise} 
    \end{cases}
\end{align*} 
and $$\overline{\xv} = \sum \lambda_l \overline{\xv}_l.$$
It is therefore straightforward that $\overline{\xv}_l$ satisfies all coupling constraints in $(\ref{tree_prob})$ and all other constraints absorbed in $\mathcal{X}^{(i)}$ since the problem is covering type and each entry of $\overline{\xv}_l$ is at least as large as that of $\xv_l$. Therefore, $\overline{\xv}_l$ is feasible in $(\ref{tree_prob})$ and $\sum_{i \in V(\CT)} \iprod{\cv^{(i)}}{\overline{\xv}^{(i)}} \geq \OPT$. Now let us compare $\xv_*$ to $\overline{\xv}$. We first observe that
\begin{equation}
    \begin{aligned}
        \label{eq:max_bound}
        \max_{i : v \in \CQ_i}\{(\xv_{l}^{(i)})_v\} \leq \sum_{i : v \in \CQ_i} (\xv_{l}^{(i)})_v \text{ since } (\xv_{l}^{(i)})_v \geq 0.
    \end{aligned}
\end{equation}
By construction, for any $v \in \mathcal{W}^*$, it follows that
\begin{align*}
    (\overline{\xv}^{(i)})_v = (\xv_*^{(i)})_v,\forall v \in \mathcal{W}^*,\forall i \in V(\CT)
\end{align*}
and for any other $v \notin \mathcal{W}^*$ and any $i \in V(\CT)$ such that $v \in \CQ^{(i)}$, it follows that
\begin{align*}
(\overline{\xv}^{(i)})_v & = \sum_{l=1}^w \lambda_l (\overline{\xv}_l^{(i)})_v \\
& \leq \sum_{l=1}^w \lambda_l \sum_{j : v \in \CQ_j} (\xv_{l}^{(j)})_v \\
& = \sum_{j : v \in \CQ_j} \sum_{l=1}^w \lambda_l  (\xv_{l}^{(j)})_v \\
& = \sum_{j :v \in \CQ_i} (\xv_*^{(j)})_v \\
& \leq \tau \cdot ({\xv_*}^{(i)})_v,
\end{align*}
where the first inequality comes from (\ref{eq:max_bound}); the third equation comes from (\ref{eq:dfn_xv*}); the second inequality comes from (\ref{eq:tree_all_same}) and the choice of $\tau$.

This yields that
\begin{align*}
    \OPT & \leq \sum_{i \in V(\CT)} \iprod{\cv^{(i)}}{\overline{\xv}^{(i)}} \\
    & = \sum_{i \in V(\CT)} \iprod{\cv^{(i)}}{\chi_{\mathcal{W^*}}^{(i)} \circ \overline{\xv}^{(i)}} + \sum_{i \in V(\CT)} \iprod{\cv^{(i)}}{(\textbf{1} - \chi_{\mathcal{W^*}}^{(i)}) \circ\overline{\xv}^{(i)}} \\
    & \leq \sum_{i \in V(\CT)} \iprod{\cv^{(i)}}{\chi_{\mathcal{W^*}}^{(i)} \circ \xv_*^{(i)}} + \tau \sum_{i \in V(\CT)} \iprod{\cv^{(i)}}{(\textbf{1} - \chi_{\mathcal{W^*}}^{(i)}) \circ \xv_*^{(i)}} \\
\end{align*}

Recall that \begin{align*}
    & \sum_{i \in V(\CT)} \iprod{\cv^{(i)}}{\chi_{\mathcal{W^*}}^{(i)} \circ \xv_*^{(i)}} \geq \dfrac{\LAG_k^M}{\theta_k}, \\
    & \sum_{i \in V(\CT)} \iprod{\cv^{(i)}}{(\textbf{1} - \chi_{\mathcal{W^*}}^{(i)}) \circ \xv_*^{(i)}} = \LAG_k^M - \sum_{i \in V(\CT)} \iprod{\cv^{(i)}}{\chi_{\mathcal{W^*}}^{(i)} \circ \xv_*^{(i)}}.
\end{align*}

It follows that
\begin{align*}
    \dfrac{\OPT}{\LAG_k^M} & \leq \dfrac{\sum_{i \in V(\CT)} \iprod{\cv^{(i)}}{\chi_{\mathcal{W^*}}^{(i)} \circ \xv_*^{(i)}} + \tau \sum_{i \in V(\CT)} \iprod{\cv^{(i)}}{(\textbf{1} - \chi_{\mathcal{W^*}}^{(i)}) \circ \xv_*^{(i)}}}{\LAG_k^M} \\
    & =  \dfrac{\sum_{i \in V(\CT)} \iprod{\cv^{(i)}}{\chi_{\mathcal{W^*}}^{(i)} \circ \xv_*^{(i)}} + \tau (\LAG_k^M - \sum_{i \in V(\CT)} \iprod{\cv^{(i)}}
    {\chi_{\mathcal{W^*}}^{(i)} \circ \xv_*^{(i)}})}{\LAG_k^M} \\
     & =  \dfrac{(1-\tau) \sum_{i \in V(\CT)} \iprod{\cv^{(i)}}{\chi_{\mathcal{W^*}}^{(i)} \circ \xv_*^{(i)}} + \tau \LAG_k^M}{\LAG_k^M} \\
     & \leq \dfrac{(1-\tau) \dfrac{\LAG_k^M}{\theta_k} + \tau \LAG_k^M}{\LAG_k^M} \\
    & = \dfrac{(1-\tau) + \tau \cdot\theta_k}{\theta_k}.
\end{align*}
Therefore, we conclude that
\begin{align*}
   \dfrac{\theta_k}{(1-\tau) + \tau \cdot\theta_k} \OPT \leq  \LAG_k^M.
\end{align*} 
\end{proof}

(\textbf{Proof of Corollary \ref{cor:application_for_two_stage}}):
\begin{proof}
    Consider the following two-stage problem with $Z$ scenarios:
    \begin{equation}
        \begin{aligned}
            \label{eq:two_stage_problem}
            \OPT :=  \min_{(\xv,\yv)} \ & \sum_{i \in [k]} \iprod{\cv^{(i)}}{\xv^{(i)}} + \iprod{\dv^{(i)}}{\yv^{(i)}} \\
\text{s.t.} \ & (\xv^{(i)},\yv^{(i)}) \in \mathcal{X}^{(i)},\forall i \in [Z], \\
\ & \xv^{(1)} = \xv^{(j)} \in \{0,1\}^n,\forall j \in [Z].
        \end{aligned}
\end{equation}
In this case, we can view $(\ref{eq:two_stage_problem})$ as a MIP with variables $\xv,\yv^{(i)},\forall i \in [Z]$ where $\xv$ has a local copy $\xv^{(i)}$ in each block $i$.
When each block is a packing problem, it is straightforward to verify that that 
\begin{align*}
   & \mathcal{F}_i := \{\xv\} \cup \{\yv^{(i)}\},\forall i \in [Z], \\
    & \mathcal{W}_S := \{\xv_S\} \cup \bigcup_{i \in [Z]} \{\yv^{(i)}\},\forall S \in \binom{n}{\leq k}.
\end{align*}
are all good. Therefore, to apply Theorem \ref{thm_Hierarchical_bound_for_packing}, we consider the following LP: 
\begin{align*}
    \eta_k := \min & \sum_{i \in [Z]} \alpha_i + \sum_{S \in \binom{n}{\leq k}}  \beta_{S} \\  \text{s.t. } &\sum_{i \in [Z]} \alpha_i \chi_{\mathcal{F}_i} + \sum_{S \in \binom{n}{\leq k}}  \beta_s \chi_{\mathcal{W}_S} \geq \textbf{1} \\ & \alpha,\beta \geq 0.
\end{align*}
Due to the symmetry, one may assume that $\alpha_i$ are the same for $i \in [K]$ and $\beta_S$ are the same for $S \in \binom{n}{\leq k}$. Therefore, the above LP reduces to the following two-dimensional LP:
\begin{align*}
    \eta_k := \min & \ \alpha + \beta \\  \text{s.t. } & 
    \begin{bmatrix}
        1 \\ 1/Z
    \end{bmatrix} \alpha + \begin{bmatrix}
        t \\ 1
    \end{bmatrix} \beta \geq \textbf{1} \\ & \alpha,\beta \geq 0.
\end{align*}
and one can verify that $\eta_k = 2+\dfrac{2t-Z\cdot t-1}{Z-t}$ and therefore under Assumption \ref{asp_local_recourse}, $\left(2+\dfrac{2t-Z\cdot t-1}{Z-t}\right) \cdot \OPT \leq \LAG_k^{M} \leq \OPT$.

When each block is a covering problem, it is straightforward to verify that that 
\begin{align*}
    & \mathcal{W}_S := \{\xv_S\} \cup \bigcup_{i \in [Z]} \{\yv^{(i)}\},\forall S \in \binom{n}{\leq k}.
\end{align*}
are all $k$-good.
Similar to the argument above, to apply the Theorem \ref{thm_Hierarchical_bound_for_covering}, we can consider the following LP (reduced by symmetry):
\begin{align*}
    \theta_k := \min & \ \beta \\  \text{s.t. } & 
     \begin{bmatrix}
        t \\ 1
    \end{bmatrix} \beta \geq \textbf{1} \\ & \beta \geq 0.
\end{align*}
and one can verify that $\theta_k = \dfrac{1}{t}, \tau = Z$. Therefore, $\dfrac{1}{(1-Z)\cdot t + Z} \cdot \OPT \leq \LAG_k^{M} \leq \OPT$.
\end{proof}

\textbf{(Proof of Remark \ref{rem:tightness}):}

\begin{proof}
For packing instance, we consider
\begin{align*}
C^{(1)}(x_1,x_2) = &  \min -0.5 y_1 -  0.25 x_1 - 0.25 x_2 \\
& \text{s.t. } y_1 + x_1 + x_2 \leq 2, \\
& \ \ \ \ \ y_1,x_1,x_2 \in \{0,1\}. 
\end{align*}
and
\begin{align*}
C^{(2)}(x_3,x_4) = &  \min -0.5 y_2 -  0.25 x_3 - 0.25 x_4 \\
& \text{s.t. } 2y_2 + x_3 + x_4 \leq 2, \\
& \ \ \ \ \ y_2,x_3,x_4 \in \{0,1\}. 
\end{align*}
It is easy to see that
\begin{align*}
    & C^{(1)}(0,0) = -0.5,  C^{(1)}(0,1) = -0.75,  C^{(1)}(1,0) = -0.75, C^{(1)}(1,1) = -0.5, \\
    & C^{(2)}(0,0) = -0.5,  C^{(2)}(0,1) = -0.25,  C^{(2)}(1,0) = -0.25, C^{(2)}(1,1) = -0.5.
\end{align*}

In this case, it follow that $\OPT = -1$. It is also straightforward to check that $(x_1,x_2,y_1,x_3,x_4,y_2)  = (0.5,0.5,1,0.5,0.5,0.5)$ is feasible in (\ref{primal_char_ex_tree_prob}) and therefore $\LAG^M_1 \leq -1.25$ and therefore $\dfrac{\LAG^M_1}{\OPT} \geq 1.25$.

For covering instance, we consider 
\begin{align*}
C^{(1)}(x_1,x_2) = &  \min  0.25 x_1 + 0.25 x_2 \\
& \text{s.t. } y_1 + x_1 + x_2 \geq 2, \\
& \ \ \ \ \ y_1,x_1,x_2 \in \{0,1\}. 
\end{align*}
and
\begin{align*}
C^{(2)}(x_3,x_4) = &  \min 0.5 y_2 +  0.25 x_3 +0.25 x_4 \\
& \text{s.t. } 2y_2 + x_3 + x_4 \geq 2, \\
& \ \ \ \ \ y_2,x_3,x_4 \in \{0,1\}. 
\end{align*}
It is easy to see that
\begin{align*}
    & C^{(1)}(0,0) = \infty,  C^{(1)}(0,1) = 0.25,  C^{(1)}(1,0) = 0.25, C^{(1)}(1,1) = 0.5, \\
    & C^{(2)}(0,0) = 0.5,  C^{(2)}(0,1) = 0.75,  C^{(2)}(1,0) = 0.75, C^{(2)}(1,1) = 0.5. 
\end{align*}
In this case, it follow that $\OPT = 1$. It is also straightforward to check that $(x_1,x_2,y_1,x_3,x_4,y_2)  = (0.5,0.5,1,0.5,0.5,0.5)$ is feasible in (\ref{primal_char_ex_tree_prob}) and therefore $\LAG^M_1 \leq 0.75$ and therefore $\dfrac{\LAG^M_1}{\OPT} \leq 0.75$.

\end{proof}

\section{Strong duality of V-Lagrangian}
\label{sec:Strong duality of V-Lagrangian}
In this section, we focus on the proof of Theorem \ref{thm_strong_dual_vertex}.

\textbf{(Proof of Theorem \ref{thm_strong_dual_vertex}:)}
\begin{proof}
    As we mentioned earlier, the optimal objective value of $(\ref{ex_dual_prob_vertex})$ is equal to the optimal objective function value of the following convex problem:
\begin{equation}
\label{ex_main_prob_vertex_primal}
\begin{aligned}
\LAG^V :=  \min_{(\xv,\yv,\wv)} \ & \sum_{i \in \{1,2\}}  \iprod{\cv^{(i)}}{\xv^{(i)}} + \iprod{\dv^{(i)}}{\yv^{(i)}} \\
\text{s.t.}\ & (\xv^{(i)},\yv^{(i)},\wv^{(i)}) \in \conv\{\mathcal{X}^{(i)}_{V}\},\forall i \in \{1,2\}, \\
& \  \xv^{(1)} = \xv^{(2)}, \\
& \ \wv^{(1)} = \wv^{(2)}.
\end{aligned}
\end{equation}


Consider $$\mathcal{J}^{(i)}  := \{(\xv^{(i)},\wv^{(i)}) : \xv^{(i)} \in \{0,1\}^n, w^{(i)}_{\vv} = \prod_{s \in [n]} \sigma_{v_s}((\xv^{(i)})_s),\text{$\forall \vv$ vertex of $[0,1]^n$}\}, \forall i \in \{1,2\}.$$
It is straightforward to verify that
\begin{align*}
& \mathcal{J}^{(i)} \text{ is a collection of affinely independent points}, \forall i \in \{1,2\}, \\
& \mathcal{J}^{(1)} = \mathcal{J}^{(2)}.
\end{align*}
By Proposition \ref{prop:decomposibility},
the feasible region of (\ref{ex_main_prob_vertex_primal}) is the same as
$$\conv\left\{ (\xv,\yv,\wv) \;\middle\vert\;
   \begin{array}{@{}l@{}} 
 (\xv^{(i)},\yv^{(i)},\wv^{(i)}) \in \mathcal{X}^{(i)}_V,\forall i \in \{1,2\}, \\ 
 \xv^{(1)} = \xv^{(2)}, \\
\wv^{(1)} = \wv^{(2)}\\ 
   \end{array} 
\right\}. $$
where $\mathcal{X}^{(i)}_{V}$ is $\mathcal{M}_i$ and $ (\xv^{(i)},\wv^{(i)}) = \pv_i$ in the application of Proposition \ref{prop:decomposibility}.

Therefore, for any feasible solution $(\xv_*,\yv_*,\wv_*)$ in (\ref{ex_main_prob_vertex_primal}), it follows that $(\xv_*,\yv_*)$ is a convex combination of feasible solution in (\ref{prob}). Since the objective function is a linear function and only involves $\xv,\yv$, this implies $\OPT = \LAG^V$.
\end{proof}

\section{Comparison between V-Lagrangian and SDA}\label{sec:Comparison between V-Lagrangian and SDA}
In this section, we will present proofs for Proposition \ref{prop_comparision} and Proposition \ref{prop:SDA_bad}.

In the following subsection, we collect some standard results from convex optimization that we require for our results. 
\subsection{Analysis of sub-gradient method.}

\begin{lemma}
Since $L^{V}(\lambda,\mu)$ is concave piece-wise linear function, its $\partial L^{\mathcal{V}}(\lambda,\mu)$ is defined as
\begin{align*}
    \partial L^{V}(\lambda,\mu) = \conv\{\hv : \hv \in \mathcal{I}(\lambda,\mu)\}
\end{align*}
where $$\mathcal{I}(\lambda,\mu) := \{ (\xv^{(1)},\wv^{(1)}) - (\xv^{(2)},\wv^{(2)}) : (\xv,\wv) \text{ is minimizer of } L^{V}(\lambda,\mu) \}.$$    
\end{lemma}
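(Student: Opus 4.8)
The plan is to exhibit $L^V$ as the pointwise minimum of finitely many affine functions of $(\lambda,\mu)$ and then apply the classical formula for the super-differential of such a concave, piecewise-linear function, checking that the affine pieces active at $(\lambda,\mu)$ are exactly those produced by the minimizers collected in $\mathcal{I}(\lambda,\mu)$.

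First I would reduce to finitely many pieces. In the minimization defining $L^V(\lambda,\mu)$ in (\ref{ex_dual_prob_vertex_inner}), the coefficient of $(\lambda,\mu)$ is $\hv:=\big((\xv^{(1)}-\xv^{(2)}),(\wv^{(1)}-\wv^{(2)})\big)$, which depends only on the $(\xv^{(i)},\wv^{(i)})$ coordinates of a feasible point. Since $\xv^{(i)}\in\{0,1\}^n$ and each $\wv^{(i)}$ is a fixed function of $\xv^{(i)}$, the set $P^{(i)}$ of attainable values of $(\xv^{(i)},\wv^{(i)})$ over $\mathcal{X}^{(i)}_V$ is finite, and (as the feasible region of (\ref{ex_dual_prob_vertex_inner}) is a product) the attainable pairs form $P^{(1)}\times P^{(2)}$. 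For $\pv=(\pv^{(1)},\pv^{(2)})\in P^{(1)}\times P^{(2)}$ I set $g(\pv):=\min\{\sum_{i}\iprod{\cv^{(i)}}{\xv^{(i)}}+\iprod{\dv^{(i)}}{\yv^{(i)}}:(\xv^{(i)},\yv^{(i)},\wv^{(i)})\in\mathcal{X}^{(i)}_V,\ (\xv^{(i)},\wv^{(i)})=\pv^{(i)}\}$ and write $\hv(\pv)$ for the associated coefficient vector. Each fiber is nonempty and the corresponding minimum is over a mixed-integer linear set with rational data, so $g(\pv)$ is attained unless it is $-\infty$; the latter forces $L^V\equiv-\infty$, a degenerate case we exclude, so henceforth $g$ is finite everywhere. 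Collecting the objective of (\ref{ex_dual_prob_vertex_inner}) by the value of its $(\xv,\wv)$-coordinates gives
\[
L^V(\lambda,\mu)=\min_{\pv\in P^{(1)}\times P^{(2)}}\big(\iprod{(\lambda,\mu)}{\hv(\pv)}+g(\pv)\big),
\]
a minimum of finitely many affine functions, hence a concave, piecewise-linear, everywhere finite function.

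Next I would prove the elementary fact that for $f(z)=\min_{p\in P}(\iprod{a_p}{z}+b_p)$ with $P$ finite, $\partial f(z)=\conv\{a_p : \iprod{a_p}{z}+b_p=f(z)\}=:\conv A(z)$, where $\partial$ denotes the super-differential. The inclusion $\conv A(z)\subseteq\partial f(z)$ is immediate: each active affine function majorizes $f$ on the whole space and agrees with $f$ at $z$, so its gradient is a supergradient, and $\partial f(z)$ is convex. For the reverse inclusion, suppose $h\in\partial f(z)\setminus\conv A(z)$ and separate $h$ from the compact convex set $\conv A(z)$ by a direction $d$ with $\iprod{d}{h}<\iprod{d}{a}$ for all $a\in A(z)$. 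For all sufficiently small $t>0$ the non-active pieces remain dominated near $z$, so $f(z+td)=f(z)+t\min_{a\in A(z)}\iprod{a}{d}>f(z)+t\iprod{h}{d}$, contradicting the supergradient inequality $f(z+td)\le f(z)+t\iprod{h}{d}$.

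Finally I would identify $A(\lambda,\mu)$ with $\mathcal{I}(\lambda,\mu)$. If $(\xv,\yv,\wv)$ is a minimizer of (\ref{ex_dual_prob_vertex_inner}) and $\pv^{(i)}:=(\xv^{(i)},\wv^{(i)})$, then $\iprod{(\lambda,\mu)}{\hv(\pv)}+g(\pv)\le L^V(\lambda,\mu)$ since $(\xv,\yv,\wv)$ is feasible for the fiber problem, while the reverse inequality is the definition of $L^V$; hence $\pv$ is active and $\hv(\pv)=(\xv^{(1)}-\xv^{(2)},\wv^{(1)}-\wv^{(2)})\in A(\lambda,\mu)$. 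Conversely, if $\pv$ is active, an optimal solution of its fiber problem (which exists by the attainment above) is a minimizer of (\ref{ex_dual_prob_vertex_inner}), so $\hv(\pv)\in\mathcal{I}(\lambda,\mu)$. Thus $A(\lambda,\mu)=\mathcal{I}(\lambda,\mu)$ as sets, and the formula yields $\partial L^V(\lambda,\mu)=\conv\,\mathcal{I}(\lambda,\mu)$. The only step needing genuine care, as opposed to routine convex analysis, is the reduction in the second paragraph — in particular, verifying that the fiberwise minima $g(\pv)$ are attained, so that each active affine piece really does arise from a minimizer listed in $\mathcal{I}(\lambda,\mu)$; everything after that is standard.
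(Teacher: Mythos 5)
Your proof is correct. The paper itself states this lemma without proof, treating it as a standard fact from convex analysis (the finite-dimensional Danskin-type formula for the superdifferential of a pointwise minimum of affine functions), so there is no paper argument to compare against; your write-up simply supplies the standard argument in full. The decomposition into fibers indexed by the attainable $(\xv^{(i)},\wv^{(i)})$ values, the two-inclusion proof of $\partial f(z)=\conv A(z)$ via the supergradient inequality and a separation argument, and the identification $A(\lambda,\mu)=\mathcal{I}(\lambda,\mu)$ are all sound, and you correctly isolate the one genuinely non-routine point --- attainment of the fiberwise minima (Meyer's theorem for rational MILPs) and the degenerate case $L^V\equiv-\infty$ --- which the paper's bare statement glosses over.
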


\begin{corollary}
For any $\nabla L^{V}(\lambda,\mu) \in \partial L^{V}(\lambda,\mu)$, $\norm{\nabla L^{V}(\lambda,\mu)}_2^2 \leq n+2$.
\end{corollary}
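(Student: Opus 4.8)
The plan is to read off the structure of $\partial L^V(\lambda,\mu)$ from the preceding Lemma and combine it with the elementary fact that each vector $\wv^{(i)}$ feasible for $\mathcal{X}^{(i)}_V$ is a $0/1$ indicator supported on a single vertex. First I would recall that, by the Lemma, every $\nabla L^V(\lambda,\mu) \in \partial L^V(\lambda,\mu)$ is a convex combination of generators $\hv \in \mathcal{I}(\lambda,\mu)$, i.e.\ of vectors of the form $\hv = \bigl(\xv^{(1)}-\xv^{(2)},\, \wv^{(1)}-\wv^{(2)}\bigr)$ for some minimizer $(\xv,\wv)$ of $L^V(\lambda,\mu)$; in particular $\xv^{(i)}\in\{0,1\}^n$ and $w^{(i)}_{\vv}=\prod_{s\in[n]}\sigma_{v_s}\bigl((\xv^{(i)})_s\bigr)$ for each vertex $\vv$ of $[0,1]^n$.

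Next I would bound $\norm{\hv}_2^2$ for a single generator by splitting it into its $\lambda$-block and its $\mu$-block. The $\lambda$-block $\xv^{(1)}-\xv^{(2)}$ lies in $\{-1,0,1\}^n$, hence has squared norm at most $n$. For the $\mu$-block, observe that when $\xv^{(i)}\in\{0,1\}^n$ the product $w^{(i)}_{\vv}=\prod_{s\in[n]}\sigma_{v_s}\bigl((\xv^{(i)})_s\bigr)$ equals $1$ if $\vv=\xv^{(i)}$ and $0$ otherwise (if $\vv\neq\xv^{(i)}$ then some factor vanishes); this is the same fact used to justify Algorithm~\ref{solve_vertex_dual_prob_inner}. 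Thus $\wv^{(i)}$ is the standard basis vector indexed by the vertex $\xv^{(i)}$, so $\wv^{(1)}-\wv^{(2)}$ has at most two nonzero entries, each of magnitude $1$, and squared norm at most $2$. Summing the two blocks gives $\norm{\hv}_2^2\le n+2$ for every $\hv\in\mathcal{I}(\lambda,\mu)$.

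Finally I would promote this bound from the generators to the whole subdifferential using convexity of $z\mapsto\norm{z}_2^2$: writing $\nabla L^V(\lambda,\mu)=\sum_{\ell}\alpha_\ell\hv_\ell$ with $\hv_\ell\in\mathcal{I}(\lambda,\mu)$, $\alpha_\ell\ge 0$ and $\sum_\ell\alpha_\ell=1$, Jensen's inequality yields $\norm{\nabla L^V(\lambda,\mu)}_2^2\le\sum_\ell\alpha_\ell\norm{\hv_\ell}_2^2\le\max_\ell\norm{\hv_\ell}_2^2\le n+2$. I do not anticipate any real obstacle; the two points worth stating carefully are the identification of $\wv^{(i)}$ with a single-vertex indicator, and the fact that it is the convexity (not linearity) of the squared Euclidean norm that lets the per-generator estimate survive passing to the convex hull.
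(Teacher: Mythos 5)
Your proof is correct and follows essentially the same route as the paper: the paper's one-line argument is exactly that each optimal $(\xv,\wv)$ has $0/1$ components with $\wv^{(i)}$ supported on a single vertex, so the generator $\bigl(\xv^{(1)}-\xv^{(2)},\wv^{(1)}-\wv^{(2)}\bigr)$ has squared norm at most $n+2$. Your additional Jensen step to pass from the generators to the full convex hull is a worthwhile detail the paper leaves implicit, but it does not change the substance of the argument.
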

\begin{proof}
    For any optimal solution of one of the sub-problems of  $L^{V}(\lambda,\mu)$, every component of $(\xv,\wv)$ is in $\{0, 1\}$ and the support of $w$ is one. 
\end{proof}

We need the following classic result of non-smooth optimization that we prove here for completeness:
\begin{proposition}
    \label{prop:gradient}
    Let $(\lambda^*,\mu^*)$ be optimal solution of (\ref{ex_dual_prob_vertex}). Then there is a sub-gradient method that solves (\ref{ex_dual_prob_vertex}) to $\epsilon$-optimality in $\dfrac{\norm{\lambda^*,\mu^*}^2(n+2)}{\epsilon^2}$ iterations. 
\end{proposition}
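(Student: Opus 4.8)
\textbf{Proof plan for Proposition \ref{prop:gradient}.}

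The plan is to apply the standard projected/plain sub-gradient method to the concave maximization problem (\ref{ex_dual_prob_vertex}) and invoke the classical convergence bound for non-smooth concave optimization. First I would set up notation: write $z = (\lambda,\mu)$, let $f(z) := L^V(\lambda,\mu)$, which is concave and piecewise linear (hence has sub-gradients everywhere given by the preceding Lemma), and let $z^* = (\lambda^*,\mu^*)$ be a maximizer. The sub-gradient method generates iterates $z_{k+1} = z_k + \alpha_k g_k$ where $g_k \in \partial f(z_k)$ (in the concave sense) and $\alpha_k > 0$ is a step size; initialize $z_0 = 0$. The key inequalities are: by concavity, $f(z^*) - f(z_k) \le \iprod{g_k}{z^* - z_k}$, and by expanding the squared distance,
\begin{align*}
\norm{z_{k+1} - z^*}^2 &= \norm{z_k - z^*}^2 - 2\alpha_k \iprod{g_k}{z^* - z_k} + \alpha_k^2 \norm{g_k}^2 \\
&\le \norm{z_k - z^*}^2 - 2\alpha_k (f(z^*) - f(z_k)) + \alpha_k^2 \norm{g_k}^2.
\end{align*}
Summing from $k = 0$ to $N-1$ and telescoping gives $2 \sum_{k=0}^{N-1}\alpha_k (f(z^*) - f(z_k)) \le \norm{z_0 - z^*}^2 + \sum_{k=0}^{N-1}\alpha_k^2 \norm{g_k}^2$, so the best iterate satisfies $\min_{k<N}(f(z^*)-f(z_k)) \le \big(\norm{z^*}^2 + \sum \alpha_k^2\norm{g_k}^2\big)/\big(2\sum\alpha_k\big)$.

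Next I would plug in the gradient bound. By the Corollary preceding this proposition, every sub-gradient $g_k \in \partial L^V(\lambda,\mu)$ satisfies $\norm{g_k}^2 \le n+2$; call this bound $G^2 := n+2$. With the classical constant step size $\alpha_k = \alpha = \norm{z^*}/(G\sqrt{N})$ (or equivalently optimizing the right-hand side over a constant step), the bound becomes $\min_{k<N}(f(z^*) - f(z_k)) \le \norm{z^*} G / \sqrt{N}$. Setting the right-hand side equal to $\epsilon$ and solving for $N$ yields $N = \norm{z^*}^2 G^2 / \epsilon^2 = \norm{(\lambda^*,\mu^*)}^2 (n+2)/\epsilon^2$, which is exactly the claimed iteration count. (If one wants to avoid knowing $\norm{z^*}$ in advance, a standard doubling trick on the step-size guess changes the bound only by a constant factor, which I would mention as a remark rather than carry out.)

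The only mild subtlety — and the closest thing to an obstacle — is that the step size depends on $\norm{z^*}$, which is not known a priori; this is handled by either assuming a known bound on $\norm{z^*}$ (reasonable since the dual variables can be restricted to a compact box by rational-data arguments) or by the doubling trick, and in either case the order of the iteration count is unchanged. A second small point to state carefully is that the sub-gradients used are genuine elements of $\partial L^V$ as characterized by the earlier Lemma — i.e., they arise from optimal solutions of the decomposable inner problem (\ref{ex_dual_prob_vertex_inner}) — so the norm bound $n+2$ indeed applies; this is immediate from the Corollary. Everything else is the textbook telescoping argument above.
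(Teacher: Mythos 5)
Your proposal is correct and follows essentially the same route as the paper: the textbook telescoping sub-gradient bound with $G^2 = n+2$ from the preceding corollary, yielding $N = \norm{(\lambda^*,\mu^*)}^2(n+2)/\epsilon^2$. The only cosmetic difference is the step size — the paper takes $\alpha = \epsilon/G^2$ (which does not require knowing $\norm{z^*}$ in advance, so the subtlety you flag does not arise there), while you take $\alpha = \norm{z^*}/(G\sqrt{N})$; both give the identical iteration count.
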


\begin{proof}
    We present a general result on the sub-gradient method for non-smooth convex optimization. Consider some arbitrary unconstrained convex function $f(\xv)$ \footnote{The "$x$" variable here is a generic variable, not to be confused with the $x$ variables in the MIPs we consider in the paper.}, its sub-gradient $\partial f(\xv)$ is defined as
\begin{align*}
    \partial f(\xv) := \{ \gv : f(\yv) \geq f(\xv) + \iprod{\gv }{\yv - \xv},\forall \yv\}.
\end{align*}

The classic sub-gradient algorithm is as follows:
\begin{algorithm}
\caption{classic sub-gradient method}\label{alg:two}
 initialize $\xv_1 \text{ and } t = 1$ \\
 \While{some termination condition is not met}{
  \text{compute } $\gv_t \in \partial f(\xv_t)$  \\
  $\xv_{t+1} = \xv_{t} - \mu_t \gv_t$ \Comment{$\mu_t$ is the step size}
 }
\end{algorithm}

Let $G$ be the Lipschitz constant of $f$ such that
\begin{align*}
    \norm{f(\xv) - f(\yv)} \leq G \norm{\xv - \yv}_2,\forall \xv,\yv
\end{align*}
which is equivalent to
\begin{align*}
    \norm{ \gv(\xv)} \leq G,\forall \xv,\forall \gv(\xv) \in \partial f(\xv).
\end{align*}

Consider any optimal solution $\xv_*$ and let $R := \norm{\xv_1-\xv_*}_2$. Let $\{\mu_r\}_{r=1}^t$ be a sequence of step-size.
Using the definition of sub-gradient, it follows that
\begin{align}
    \norm{\xv_{t+1} - \xv_*}_2^2 & =  \norm{\xv_{t} - \mu_{t} \gv_{t} - \xv_*}_2^2 \\
    & = \norm{\xv_{t}  - \xv_*}_2^2 + \mu_{t}^2 \norm{\gv_{t}}_2^2 - 2 \mu_{t}  \iprod{\gv_{t}}{\xv_{t} - \xv_*}   \\
    & \leq \norm{\xv_{t}  - \xv_*}_2^2 + \mu_{t}^2 \norm{\gv_{t}}_2^2 - 2 \mu_{t}  (f(\xv_{t}) - f(\xv_*)) \label{eq:telescope}
\end{align}

The telescoping  summation of (\ref{eq:telescope}), implies that
\begin{align}
    \label{eq:subgradient1}
    \norm{\xv_{t+1} - \xv_*}_2^2 \leq \norm{\xv_{1} - \xv_*}_2^2 - 2\sum_{r=1}^{t} \mu_r (f(\xv_r) - f(\xv_*)) + \sum_{r=1}^t \mu_{r}^2 \norm{\gv_r}_2^2
\end{align}
Since $\norm{\xv_{t+1} - \xv_*}_2^2 \geq 0$, (\ref{eq:subgradient1}) implies that
\begin{equation}
\label{eq:subgradient2}
    \begin{aligned}
        & 0 \leq R^2 - 2\sum_{r=1}^{t} \mu_r (f(\xv_r) - f(\xv_*)) + G^2 \sum_{r=1}^t \mu_{r}^2.  \\
    \implies  & 2\sum_{r=1}^{t} \mu_r (f(\xv_r) - f(\xv_*))  \leq R^2 + G^2 \sum_{r=1}^t \mu_{r}^2.
    \end{aligned}
\end{equation}
Let $f^{\text{best}}_t = \min\limits_{r=1,\dots,t} f(\xv_r) - f(\xv_*)$. Then by (\ref{eq:subgradient2}), it follows that
\begin{align}
    f^{\text{best}}_t \leq \dfrac{R^2 + G^2 \sum\limits_{r=1}^t \mu_r^2 }{2\sum\limits_{r=1}^t \mu_{r}}. \label{eq:subgradient3}
\end{align}
If we use fixed step size $\mu_r = \mu$, (\ref{eq:subgradient3}) becomes
\begin{align}
    f^{\text{best}}_t \leq \dfrac{R^2}{2t \mu} + \dfrac{G^2 \mu}{2}.
\end{align}
Choosing $\mu = \epsilon/G^2$ and $t=R^2G^2/\epsilon^2$, it follows that
\begin{align*}
    f^{\text{best}}_t \leq \epsilon
\end{align*}
This gives a convergence proof with $R^2G^2/\epsilon^2$ iterations. 

The result now follows from the fact that in our setting, $G \leq \sqrt{n+2}.$

\end{proof}

\subsection{Proof of Proposition \ref{prop_comparision}}
Since the dimension of $(\lambda^*,\mu^*)$ is exponential in $n$, it is possible that $\norm{\lambda^*,\mu^*}$ is very large.  We divide the proof of Proposition \ref{prop_comparision} into two cases, which bounds the size of $\norm{\lambda^*,\mu^*}$. These two cases are presented as Proposition $\ref{prop:terminate1}$ and Proposition $\ref{prop:terminate2}$.

\begin{proposition}
        \label{prop:terminate1} Suppose $\cv^{(i)},\dv^{(i)},\xv^{(i)},\yv^{(i)} \geq 0$ for all $i \in \{1,2\}$.
        If SDA terminates in $t$ iterations while the sub-problems remain feasible in all iterations of the SDA algorithm, then there exists some optimal solution $(\lambda^*,\mu^*)$ of (\ref{ex_dual_prob_vertex}) such that $\norm{(\lambda^*,\mu^*)}^2 \leq 8 \phi^2 t$.
\end{proposition}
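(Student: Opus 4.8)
The plan is to construct an explicit dual solution $(\lambda^*,\mu^*)$ for (\ref{ex_dual_prob_vertex}) that attains the optimal value $\OPT$ (which equals $\LAG^V$ by Theorem~\ref{thm_strong_dual_vertex}), and whose norm we can bound in terms of the quantities $\phi$ and $t_{SDA}$. First I would record what SDA produces: if it runs for $t$ iterations, then in each iteration $r$ it identifies (up to) two vertices $\vv^{(1)}_r, \vv^{(2)}_r \in \{0,1\}^n$ which get added to the forbidden set $\mathcal{V}$, and when it terminates the lower bound equals $\OPT$. The key structural observation is that, because the sub-problems stay feasible throughout and the objective data are nonnegative, the set of vertices $\mathcal{V}$ accumulated by SDA has the property that restricting each block's feasible region to $\xv^{(i)} \notin \mathcal{V}$ together with $\xv^{(1)}=\xv^{(2)}$ forces the optimal value up to $\OPT$; equivalently, the ``no-good'' information carried by $\mathcal{V}$ is exactly what a dual solution supported on $\mathcal{V}$ needs to encode.

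Next I would set $\lambda^* = 0$ (mirroring the $\lambda=0$ setting of SDA) and build $\mu^*$ supported only on the vertices in $\mathcal{V}$, i.e. $\mu^*_\vv = 0$ for $\vv \notin \mathcal{V}$, so that $\norm{\mu^*}_0 \le 2t$. For each $\vv \in \mathcal{V}$, I would choose $\mu^*_\vv$ to be a penalty just large enough to make each block prefer \emph{not} to place its $\xv$-solution at $\vv$ unless forced to by feasibility: concretely, comparing the best objective value over $\mathcal{X}^{(i)}$ restricted to $\xv^{(i)} = \vv$ against the best value over $\mathcal{X}^{(i)}$ avoiding $\mathcal{V}$, the gap between these is bounded by $2\phi$ in absolute value (each term $\iprod{\cv^{(i)}}{\xv^{(i)}}+\iprod{\dv^{(i)}}{\yv^{(i)}}$ lies in $[-\phi,\phi]$ by definition of $\phi$, and here in fact in $[0,\phi]$ by nonnegativity). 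Using Algorithm~\ref{solve_vertex_dual_prob_inner} to evaluate $L^V(0,\mu^*)$ block-by-block, this choice makes each sub-problem's minimizer land at a vertex consistent with the SDA elimination order, so that $L^V(0,\mu^*) = \OPT$ and thus $(\lambda^*,\mu^*) = (0,\mu^*)$ is dual-optimal. Since each coordinate satisfies $|\mu^*_\vv| \le 2\phi$ and there are at most $2t$ nonzero coordinates, $\norm{(\lambda^*,\mu^*)}^2 = \norm{\mu^*}_2^2 \le (2\phi)^2 \cdot 2t = 8\phi^2 t$, which is the claimed bound.

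The main obstacle I anticipate is the middle step: showing that one really \emph{can} pick the $\mu^*_\vv$ with $|\mu^*_\vv| \le 2\phi$ so that $L^V(0,\mu^*)$ exactly equals $\OPT$ — this is the crux, because a priori the natural penalty needed to ``cut off'' a vertex at the Lagrangian level could be large if the vertex is deeply suboptimal in one block but needed in another. The resolution should exploit the SDA termination certificate: the vertices are eliminated in a specific order, and at the time $\vv$ is eliminated it was (part of) a block-optimal solution, so its objective contribution differs from the eventual optimum by at most $2\phi$; one sets $\mu^*_\vv$ recursively in reverse elimination order, using the (additive, nonnegative) structure so that earlier penalties don't interfere with later ones. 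I would carry this out by induction on the SDA iteration count, verifying at each step via Algorithm~\ref{solve_vertex_dual_prob_inner} that the inner problem's value and minimizer are as intended, and that the separable structure of $L^V$ over the two blocks keeps the bookkeeping of penalties independent across iterations up to the $2\phi$ slack.
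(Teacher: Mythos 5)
Your outer skeleton coincides with the paper's: take $\lambda^*=0$, support $\mu^*$ on the at most $2t$ vertices that SDA eliminates, bound each nonzero coordinate by $2\phi$, and conclude $\norm{(\lambda^*,\mu^*)}^2\le (2\phi)^2\cdot 2t = 8\phi^2 t$. However, the step you yourself identify as the crux --- actually exhibiting coordinates $\mu^*_{\vv}$ for which $L^V(0,\mu^*)=\OPT$ --- is left unresolved, and the mechanism you sketch for it cannot work as described. You propose to choose each $\mu^*_{\vv}$ as ``a penalty just large enough to make each block prefer not to place its solution at $\vv$.'' A single multiplier cannot penalize both blocks at the same vertex: $\mu_{\vv}$ enters the inner problem (\ref{ex_dual_prob_vertex_inner}) through $\iprod{\mu}{\wv^{(1)}-\wv^{(2)}}$, so a positive $\mu_{\vv}$ penalizes block $1$ at $\vv$ while \emph{rewarding} block $2$ there, and vice versa. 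Consequently a ``penalize everything in $\mathcal{V}$'' scheme is not implementable, and the subsequent suggestion of setting the penalties recursively in reverse elimination order does not obviously repair this sign obstruction.

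The paper's construction is quite different in nature and is a single closed form rather than a recursion. Writing $p^{(i)}(\vv)$ for the optimal value of block $i$ restricted to $\xv^{(i)}=\vv$ and $p^{(i)}_{low}$ for its optimal value over vertices outside the eliminated set $\mathcal{V}$, one sets, for $\vv\in\mathcal{V}$, $(\mu^*)_{\vv}=\tfrac{-p^{(2)}_{low}}{p^{(1)}_{low}+p^{(2)}_{low}}\,p^{(1)}(\vv)+\tfrac{p^{(1)}_{low}}{p^{(1)}_{low}+p^{(2)}_{low}}\,p^{(2)}(\vv)$. This does not ``cut off'' the eliminated vertices; it \emph{transfers value between the two blocks} so that block $i$ sees the reweighted objective $\tfrac{p^{(i)}_{low}}{p^{(1)}_{low}+p^{(2)}_{low}}\bigl(p^{(1)}(\vv)+p^{(2)}(\vv)\bigr)$ at each $\vv\in\mathcal{V}$. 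SDA's termination and optimality conditions (the optimal vertex $\vv^*$ lies in $\mathcal{V}$, minimizes $p^{(1)}+p^{(2)}$ over $\mathcal{V}$, and satisfies $p^{(1)}(\vv^*)+p^{(2)}(\vv^*)\le p^{(1)}_{low}+p^{(2)}_{low}$) then force both Lagrangian sub-problems to attain their minima at $\vv^*$, with values summing to $\OPT$; nonnegativity of the data is what guarantees $p^{(i)}_{low}\ge 0$ so the weights lie in $[0,1]$ and each $|(\mu^*)_{\vv}|\le 2\phi$. You would also need to handle separately the degenerate case $p^{(1)}_{low}+p^{(2)}_{low}=0$ (where these weights are undefined and one simply takes $\mu^*=0$). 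Without some such explicit construction and verification, the proof is incomplete.
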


\begin{proof}
    Suppose the SDA terminates in $t$ iterations; we denote the final $\mathcal{V}$ by $\mathcal{V}_t$. For any $\vv \in \{0,1\}^n$, we define
    \begin{align*}
        & p^{(1)}(\vv) := \min_{\yv^{(1)}} \iprod{\cv^{(1)}}{\vv} + \iprod{\dv^{(1)}}{\yv^{(1)}} \text{ s.t. } (\vv,\yv^{(1)}) \in \mathcal{X}^{(1)}, \\
        & p^{(2)}(\vv) := \min_{\yv^{(2)}} \iprod{\cv^{(2)}}{\vv} + \iprod{\dv^{(2)}}{\yv^{(2)}} \text{ s.t. } (\vv,\yv^{(2)}) \in \mathcal{X}^{(2)}, \\
        & p^{(1)}_{low} :=  \min p^{(1)}(\uv)  \text{ s.t. } \uv \in \{0,1\}^n \setminus \mathcal{V}_t, \\
        & p^{(2)}_{low} :=  \min p^{(2)}(\uv)  \text{ s.t. } \uv \in \{0,1\}^n \setminus \mathcal{V}_t.
    \end{align*}
Since $\cv^{(i)},\dv^{(i)},\xv^{(i)},\yv^{(i)}$ are non-negative, it follows that $p^{(i)}(\vv) \geq 0,\forall \vv \in \{0,1\}^n,\forall i \in \{1,2\}$ and $p^{(i)}_{low} \geq 0,\forall i \in \{1,2\}$.

Let the optimal solution $\vv^* \in \mathcal{V}_t$, by termination condition and optimality condition, we obtain
    \begin{equation}
        \label{two_block_vertex_opt_condition}
        \begin{aligned}
                   & p^{(1)}(\vv^*) + p^{(2)}(\vv^*) \leq p^{(1)}_{low} + p^{(2)}_{low}, \\
        & p^{(1)}(\vv^*) + p^{(2)}(\vv^*) \leq p^{(1)}(\vv) + p^{(2)}(\vv),\forall \vv \in \mathcal{V}_t.
        \end{aligned}
\end{equation}
We aim to construct optimal solution $(\lambda^*,\mu^*)$ of (\ref{ex_dual_prob_vertex}) and consider two cases:
\begin{itemize}
    \item ($p^{(1)}_{low} + p^{(2)}_{low} > 0$): we
choose $$\lambda^* = 0, (\mu^*)_{\vv} = \begin{cases}
    0 & \text{ if } \vv \not\in \mathcal{V}_t \\
    \frac{-p^{(2)}_{low}}{p^{(1)}_{low} + p^{(2)}_{low}} p^{(1)}(\vv) + \frac{p^{(1)}_{low}}{p^{(1)}_{low} + p^{(2)}_{low}} p^{(2)}(\vv) & \text{ if } \vv \in \mathcal{V}_t
\end{cases}.$$

Note that the above choice of $\mu^*$ is well-defined because the
sub-problems remain feasible in all iterations of the SDA algorithm, that is, $p^{(1)}(v), p^{(2)}(v)$ are finite for all $v \in \mathcal{V}_t$ and $p^{(1)}_{low} + p^{(2)}_{low} > 0$.

We claim that $(\lambda^*,\mu^*)$ is the optimal solution of (\ref{ex_dual_prob_vertex}). To see this, let 
\begin{equation}
    \begin{aligned}
        \label{eq:compare_1}
        L^1(\xv^{(1)}) := \min_{\yv^{(1)}} \ &  \iprod{\cv^{(1)} + \lambda^* }{\xv^{(1)}} + \iprod{\dv^{(1)}}{\yv^{(1)}} + \iprod{\mu^*}{\wv^{(1)}} \\
   \text{s.t.} \ &  (\xv^{(1)},\yv^{(1)},\wv^{(1)}) \in \mathcal{X}^{(1)}_V, \\
    L^2(\xv^{(2)}) := \min_{\yv^{(2)}} \ &  \iprod{\cv^{(2)} - \lambda^*}{\xv^{(2)}} + \iprod{\dv^{(2)}}{\yv^{(2)}} - \iprod{\mu^*}{\wv^{(2)}}  \\
   \text{s.t.} \ &  (\xv^{(2)},\yv^{(2)},\wv^{(2)}) \in \mathcal{X}^{(2)}_V, \\
   u_1 := & \min_{\xv^{(1)} \in \{0,1\}^n} L^1(\xv^{(1)}), \\
   u_2 := & \min_{\xv^{(2)} \in \{0,1\}^n} L^2(\xv^{(2)}).
    \end{aligned}
\end{equation}

We claim that $u_1 = \frac{p^{(1)}_{low}}{p^{(1)}_{low} + p^{(2)}_{low}} (p^{(1)}(\vv^*) + p^{(2)}(\vv^*))$ and $u_2 = \frac{p^{(2)}_{low}}{p^{(1)}_{low} + p^{(2)}_{low}} (p^{(1)}(\vv^*) + p^{(2)}(\vv^*))$. Consider $L^1(\xv^{(1)})$, it is straightforward to see that
\begin{equation}
    \begin{aligned}
        \label{eq:compare_2}
            & L^{1}(\vv^*) = p^{(1)}(\vv^*) + \mu^*_{\vv^*}  =\frac{p^{(1)}_{low}}{p^{(1)}_{low} + p^{(2)}_{low}} (p^{(1)}(\vv^*) + p^{(2)}(\vv^*)), 
    \end{aligned}
\end{equation}
where the first equality follows from the definition of $p^{(1)}(\cdot)$ and the second equality follows by observing that $v^* \in \mathcal{V}_t$ and by plugging in the value of $(\mu^*)_{v^*}$. Therefore, more generally, we also have:
\begin{equation}
    \begin{aligned}
        \label{eq:compare_3}
          &  L^{1}(\vv) =   p^{(1)}(\vv) + \mu^*_{\vv} = \frac{p^{(1)}_{low}}{p^{(1)}_{low} + p^{(2)}_{low}} (p^{(1)}(\vv) + p^{(2)}(\vv)),\forall \vv \in \mathcal{V}_t, \\
    & L^{1}(\vv) = p^{(1)}(\vv) \geq p^{(1)}_{low},\forall \vv \in \{0,1\}^n \setminus \mathcal{V}_t.\\
    \end{aligned}
\end{equation}

By (\ref{two_block_vertex_opt_condition}),(\ref{eq:compare_2}) and (\ref{eq:compare_3}), one can check that
\begin{align*}
    & L^{1}(\vv^*) \leq L^{1}(\vv),\forall \vv \in \mathcal{V}_t, \\
    &  L^{1}(\vv^*) \leq p^{(1)}_{low} \leq  L^{1}(\vv), \forall \vv \in \{0,1\}^n \setminus \mathcal{V}_t.
\end{align*}

Thus one can conclude that $u_1 = L^{1}(\vv^*)$ and similarly we can show that $u_2 = L^{2}(\vv^*)$. 
In this case, we have $L^V(\lambda^*,\mu^*) = u_1 + u_2 = p^{(1)}(\vv^*) + p^{(2)}(\vv^*) = \OPT$. Therefore, $(\lambda^*,\mu^*)$ is the optimal solution of $(\ref{ex_dual_prob_vertex})$. 

Each non-zero entry of $\mu^*$ is bounded by $2\phi$ since $|p^{(i)}(\vv)| \leq \phi$ and $\left|\frac{-p^{(2)}_{low}}{p^{(1)}_{low}}\right|$, $\left|\frac{p^{(1)}_{low}}{p^{(1)}_{low}}\right| \in [0,1]$.
Since $\norm{\mu^*}_0 \leq 2t$, it follows that $\norm{(\lambda^*,\mu^*)}^2 \leq 8 \phi^2 t$.
\item ($p^{(1)}_{low} + p^{(2)}_{low} = 0$) choose $$\lambda^* = 0, \mu^* = 0$$
Since $\cv^{(i)},\dv^{(i)},\xv^{(i)},\yv^{(i)}$ are non-negative, combined with (\ref{two_block_vertex_opt_condition}), $p^{(1)}_{low} + p^{(2)}_{low} = 0$ implies that 
\begin{equation}
    \begin{aligned}
        & p^{(1)}_{low} = p^{(2)}_{low} = 0 \\
        & p^{(1)}(\vv^*) = p^{(2)}(\vv^*) = 0 \\
    \end{aligned}
\end{equation}
We let $L^{(i)}(\xv^{(i)}),u^{(i)}$ be defined in the same way as in (\ref{eq:compare_1}). Since $L^{(i)}(\xv) \geq 0,\forall \xv^{(i)} \in \{0,1\}^n$, then it follows that $u^{(i)} = L^{(i)}(\vv^*) = p^{(i)}(\vv^*) = 0,\forall i \in \{1,2\}$. Therefore $L^V(\lambda^*,\mu^*) = u_1 + u_2 = p^{(1)}(\vv^*) + p^{(2)}(\vv^*)$. By weak duality, $(\lambda^*,\mu^*)$ is the optimal solution of $(\ref{ex_dual_prob_vertex})$ and $\norm{(\lambda^*,\mu^*)} = 0$.
\end{itemize}
\end{proof}

\begin{proposition}
    \label{prop:terminate2}
      If SDA terminates after $t$ iterations because one of the sub-problems is infeasible, then there exists some optimal solution of (\ref{ex_dual_prob_vertex}) $(\lambda^*,\mu^*)$ such that $\norm{(\lambda^*,\mu^*)}^2 \leq 9 \phi^2 t$.
\end{proposition}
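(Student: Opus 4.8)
The plan is to exhibit an explicit dual-optimal pair $(\lambda^*,\mu^*)$ for \eqref{ex_dual_prob_vertex} with $\lambda^*=0$ and $\mu^*$ supported on the set $\mathcal{V}_t$ of vertices eliminated by SDA, so that $\norm{\mu^*}^2$ is controlled by $|\mathcal{V}_t|$ times the squared entrywise size of $\mu^*$. I assume the coupled problem \eqref{prob} is feasible (otherwise $\OPT=\LAG^V=+\infty$ and the statement is vacuous) and write $\vv^*$ for an optimizer of \eqref{prob}. Without loss of generality I take block $1$ to be the block whose subproblem becomes infeasible at iteration $t$. The first observation is that, since the SDA subproblem for block $1$ optimizes over $\{(\xv^{(1)},\yv^{(1)})\in\mathcal{X}^{(1)} : \xv^{(1)}\notin\mathcal{V}\}$, its infeasibility at termination says precisely that $\mathcal{P}^{(1)} := \proj_{\xv}\mathcal{X}^{(1)} \subseteq \mathcal{V}_t$; in particular $\vv^*\in\mathcal{P}^{(1)}\cap\mathcal{P}^{(2)}\subseteq\mathcal{V}_t$, where $\mathcal{P}^{(2)}:=\proj_{\xv}\mathcal{X}^{(2)}$.

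Next I would rewrite the inner problem \eqref{ex_dual_prob_vertex_inner} exactly as in the proof of Proposition~\ref{prop:terminate1}: because any feasible $\wv^{(i)}$ in $\mathcal{X}^{(i)}_V$ is the vertex-indicator of $\xv^{(i)}$, with $\lambda^*=0$ the two sub-problems of $L^V(0,\mu)$ reduce to $\min_{\vv\in\mathcal{P}^{(1)}}\bigl(p^{(1)}(\vv)+\mu_{\vv}\bigr)$ and $\min_{\vv\in\mathcal{P}^{(2)}}\bigl(p^{(2)}(\vv)-\mu_{\vv}\bigr)$, where $p^{(i)}(\vv):=\iprod{\cv^{(i)}}{\vv}+\min\{\iprod{\dv^{(i)}}{\yv}:(\vv,\yv)\in\mathcal{X}^{(i)}\}$. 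Here $0\le p^{(i)}(\vv)\le\phi$ by non-negativity of $\cv^{(i)},\dv^{(i)}$, and $\OPT=\min_{\vv}\bigl(p^{(1)}(\vv)+p^{(2)}(\vv)\bigr)$, the minimum being attained at $\vv^*\in\mathcal{P}^{(1)}\cap\mathcal{P}^{(2)}$.

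Then I would set $\beta:=\min_{\vv\in\mathcal{P}^{(2)}}p^{(2)}(\vv)\in[0,\phi]$, let $A:=\OPT-\beta$ (so $p^{(1)}(\vv^*)\le A\le 2\phi$, using $\beta\le p^{(2)}(\vv^*)$ and $p^{(i)}\le\phi$), and define $\lambda^*:=0$, $\mu^*_{\vv}:=A-p^{(1)}(\vv)$ for $\vv\in\mathcal{P}^{(1)}$ and $\mu^*_{\vv}:=0$ otherwise. One then checks: block $1$ contributes exactly $A$ to $L^V(0,\mu^*)$ since $p^{(1)}(\vv)+\mu^*_{\vv}=A$ for all $\vv\in\mathcal{P}^{(1)}$; and for block $2$, if $\vv\in\mathcal{P}^{(1)}\cap\mathcal{P}^{(2)}$ then $p^{(2)}(\vv)-\mu^*_{\vv}=p^{(1)}(\vv)+p^{(2)}(\vv)-A\ge\OPT-A=\beta$ with equality at $\vv^*$, while if $\vv\in\mathcal{P}^{(2)}\setminus\mathcal{P}^{(1)}$ then $\mu^*_{\vv}=0$ and $p^{(2)}(\vv)-\mu^*_{\vv}=p^{(2)}(\vv)\ge\beta$, so block $2$ contributes exactly $\beta$. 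Hence $L^V(\lambda^*,\mu^*)=A+\beta=\OPT$, and by weak duality (equivalently, by Theorem~\ref{thm_strong_dual_vertex}) $(\lambda^*,\mu^*)$ is optimal for \eqref{ex_dual_prob_vertex}. Since $\mu^*$ is supported on $\mathcal{P}^{(1)}\subseteq\mathcal{V}_t$, has $|\mu^*_{\vv}|\le 2\phi$ (as $0\le p^{(1)}(\vv)\le\phi$ and $0\le A\le 2\phi$), and $|\mathcal{V}_t|\le 2t$ (SDA eliminates at most two vertices per iteration), I obtain $\norm{(\lambda^*,\mu^*)}^2=\norm{\mu^*}^2\le 2t\cdot(2\phi)^2=8\phi^2 t\le 9\phi^2 t$.

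The only genuinely delicate point is the claim that the block-$2$ minimum equals $\beta$ exactly: $\mu^*$ is forced to $0$ on every vertex of $\mathcal{P}^{(2)}$ lying outside $\mathcal{P}^{(1)}$ (and there may be exponentially many such vertices), so one needs those vertices not to undercut the target value $\beta$ — this is precisely where non-negativity of the objective is used, since it guarantees $p^{(2)}(\vv)\ge\beta\ge 0$ there. The remaining ingredients (the collapse of \eqref{ex_dual_prob_vertex_inner} to the vertex formulation, the bound $|\mathcal{V}_t|\le 2t$, and the entrywise bound on $\mu^*$) are routine bookkeeping, and the slack between $8\phi^2 t$ and the stated $9\phi^2 t$ simply absorbs any off-by-one in counting $|\mathcal{V}_t|$.
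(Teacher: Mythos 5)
Your proposal is correct and follows essentially the same route as the paper's proof: set $\lambda^*=0$, support $\mu^*$ on the finitely many feasible first-block vertices (all of which SDA must have eliminated for that sub-problem to become infeasible), choose $\mu^*$ so that the block-$1$ value is constant on its feasible vertices, and check via non-negativity of the objective that the block-$2$ minimum is attained at the coupled optimizer, so that $L^V(\lambda^*,\mu^*)=\OPT$ and weak duality gives optimality. The only (immaterial) difference is bookkeeping: the paper flattens block $1$ to the level $2\phi$ and bounds the support by $t$ with entries at most $3\phi$, whereas you flatten to $A=\OPT-\beta$ and bound the support by $2t$ with entries at most $2\phi$; both land within the claimed $9\phi^2 t$.
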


\begin{proof}
We let
\begin{align*}
    & \mathcal{Y}^{(i)} := \{ \xv^{(i)} : \exists \yv^{(i)}, (\xv^{(i)},\yv^{(i)}) \in \mathcal{X}^{(i)}\},\forall i \in \{1,2\} \\
    & f^{(i)}(\xv^{(i)}) := \min_{\yv^{(i)}}\left\{ \iprod{\dv^{(i)}}{\yv^{(i)}} : (\xv^{(i)},\yv^{(i)}) \in \mathcal{X}^{(i)}\right\},\forall i \in \{1,2\}.
\end{align*}
Without losing any generality, we assume that 
block $1$ is infeasible after $t$ iterations while block $2$ is feasible in the first $(t-1)$-th iteration. This implies that $|\mathcal{Y}^{(1)}| \leq t$.

Now we choose 
$$\lambda^* = 0, (\mu^*)_{\vv} = \begin{cases}
        0 & \text{ if } \vv \not\in \mathcal{Y}^{(1)} \\
        -\iprod{\cv^{(1)}}{\vv} - f^{(1)}(\vv) + 2 \phi & \text{ if } \vv \in \mathcal{Y}^{(1)}. 
    \end{cases}$$ 
    We claim that $(\lambda^*,\mu^*)$ is the optimal solution of (\ref{ex_dual_prob_vertex}). 
    Let
    \begin{align*}
    L^1(\xv^{(1)}) = \min_{\yv^{(1)}} \ &  \iprod{\cv^{(1)} + \lambda^* }{\xv^{(1)}} + \iprod{\dv^{(1)}}{\yv^{(1)}} + \iprod{\mu^*}{\wv^{(1)}} \\
   \text{s.t.} \ &  (\xv^{(1)},\yv^{(1)},\wv^{(1)}) \in \mathcal{X}^{(1)}_V, \\
    L^2(\xv^{(2)}) = \min_{\yv^{(2)}} \ &  \iprod{\cv^{(2)} - \lambda^*}{\xv^{(2)}} + \iprod{\dv^{(2)}}{\yv^{(2)}} - \iprod{\mu^*}{\wv^{(2)}}  \\
       \text{s.t.} \ &  (\xv^{(2)},\yv^{(2)},\wv^{(2)}) \in \mathcal{X}^{(1)}_V, \\
    \end{align*}
    
    One can verify that
    \begin{align*}
        & L^1(\vv) = \begin{cases}
            \infty & \text{ if } \vv \not\in \mathcal{Y}^{(1)} \\
            2 \phi & \text{ if } \vv \in \mathcal{Y}^{(1)}
        \end{cases}, \\
        & L^2(\vv) = \begin{cases}
            \iprod{\cv^{(2)}}{\vv} + f^{(2)}(\vv) & \text{ if } \vv \not\in \mathcal{Y}^{(1)} \\
            \iprod{\cv^{(1)} + \cv^{(2)}}{\vv} + f^{(1)}(\vv)+ f^{(2)}(\vv) - 2 \phi & \text{ if } \vv \in \mathcal{Y}^{(1)}
        \end{cases}. 
\end{align*}
By the choice of $\mu^*$, it follows that $L^2(\vv) \leq 0 \leq L^2(\uv),\forall \vv \in \mathcal{Y}^{(2)} \cap \mathcal{Y}^{(1)},\uv \in \mathcal{Y}^{(2)} \setminus \mathcal{Y}^{(1)}$. Therefore, it follows that
\begin{align*}
    u_1 & := \min_{\xv^{(1)} \in \{0,1\}^n} L^1(\xv^{(1)}) = 2 \phi, \\
    u_2 & := \min_{\xv^{(2)} \in \{0,1\}^n} L^2(\xv^{(2)}) = \min_{\xv^{(2)} \in \mathcal{Y}^{(1)} \cap \mathcal{Y}^{(2)}} L^2(\xv^{(2)}) \\
    &= \min_{\xv^{(2)} \in \mathcal{Y}^{(1)} \cap \mathcal{Y}^{(2)}}  \iprod{\cv^{(1)} + \cv^{(2)}}{\xv^{(2)}} + f^{(1)}(\xv^{(2)}) + f^{(2)}(\xv^{(2)}) - 2 \phi.
\end{align*}
Therefore, we know that $L^V(\lambda^*,\mu^*) = u_1 + u_2 = \iprod{\cv^{(1)} + \cv^{(2)}}{\xv^{(2)}} + f^{(1)}(\xv^{(2)}) + f^{(2)}(\xv^{(2)}) \text{ for some } \xv^{(2)} \in \mathcal{Y}^{(1)}$. By weak duality, $(\lambda^*,\mu^*)$ is the optimal solution of $(\ref{ex_dual_prob_vertex})$ where each nonzero entry of $\mu$ is bounded by $3 \phi$. Since $\norm{\mu^*}_0 \leq t$, it follows that $\norm{(\lambda^*,\mu^*)}^2 \leq 9 \phi^2 t$. 
\end{proof}
\textbf{(Proof of Proposition \ref{prop_comparision}):}
\begin{proof}
    It follows directly from Proposition $\ref{prop:terminate1}$, Proposition $\ref{prop:terminate2}$ and Proposition \ref{prop:gradient}.
\end{proof}

\subsection{Proof of Proposition \ref{prop:SDA_bad}}
\begin{proof}
    Consider the following example
    \begin{equation}
        \begin{aligned}
            \min & \iprod{-\epsilon\ev}{\xv^{(1)}} + \iprod{\epsilon \ev}{\xv^{(2)}} \\
        \text{s.t. } & \xv^{(1)} = \xv^{(2)}, \\
        & \xv^{(1)} \in \{0,1\}^n, \\
        & \xv^{(2)} \in \{0,1\}^n. \\
        \end{aligned}
    \end{equation}
    One can verify that $(\epsilon \ev,\textbf{0})$ is an optimal solution of (\ref{ex_dual_prob_vertex}) and therefore by Proposition \ref{prop:gradient}, there is a sub-gradient method that solves (\ref{ex_dual_prob_vertex}) to $\epsilon$-optimality in ${n^2(n+2)^2}$ iterations.

    On the other hand, for the SDA algorithm, which fixes $\lambda = 0$, suppose the algorithm already runs $t$ iteration. Let $\mathcal{V}$, $\text{UB}$ and $\text{LB}$ be the same described in the SDA algorithm described above where $\mathcal{V}$ is the collection of points the algorithm removed, $\text{UB}$ is the primal bound and $\text{LB}$ is the dual bound.
It is clear that $|\mathcal{V}| \leq 2t$ and $\text{UB} = 0$. We observe if there exists some pair $\vv_1,\vv_2 \in \{0,1\}^n \setminus \mathcal{V}$ such that $\vv_1 \geq \vv_2$ and $\norm{\vv_1 - \vv_2}_0 \geq 2$, then $\text{LB} \leq -2\epsilon \leq -\epsilon$. To see this:
    \begin{align*}
        \text{LB} & = \min_{\xv^{1},\xv^{(2)} \in \{0,1\}^{n} \setminus \mathcal{V}}  \iprod{-\epsilon\ev}{\xv^{(1)}} + \iprod{\epsilon \ev}{\xv^{(2)}} \\
        & \leq \iprod{-\epsilon\ev}{\vv_1} + \iprod{\epsilon \ev}{\vv_2} \\
        & \leq -2\epsilon.
\end{align*}
Consider any $\vv \in \{0,1\}^{n-2}$, we construct $\vv_1 = (\vv,1,1),\vv_2 = (\vv,0,0)$. Therefore there exists $2^{n -2}$ many distinct pairs of $(\vv_1,\vv_2)$ in $\{0,1\}^n \setminus \mathcal{V}$ such that $\vv_1 \geq \vv_2$ and $\norm{\vv_1 - \vv_2}_0 \geq 2$. Therefore if $|\mathcal{V}| \leq 2^{n-2}$, there must exist such pair and therefore $\text{LB} \leq -2\epsilon$. Therefore, SDA needs at least $2^{n-3}$ iterations to certificate $\epsilon$-optimality.
\end{proof}

\section{Details of preliminary numerical experimental}\label{sec:details}

\subsection{Instance generation}

In the preliminary numerical experimental mentioned in section \ref{sec:compute}, every block is a maximum cardinality stable set problem on a random graph on 75 nodes.
We generated 10 instances for each class of problems. For each instance and each block, we construct a random graph by uniformly picking a density number $d$ from $[0.1,0.15]$ and uniformly sampling with replacement $d \cdot \frac{100\cdot 99}{2}$ edges from a clique of 100 nodes. 

\subsection{Modification of V-Lagrangian}
It is straightforward to see that if $F^{(1)}(\xv^{(1)}) + F^{(2)}(\xv^{(2)}) \geq 0$ is a primal-redundant constraint in (\ref{implied}), then if $\beta (F^{(1)}(\xv^{(1)}) + F^{(2)}(\xv^{(2)})) \geq 0$ is also a  a primal-redundant constraint for any $\beta \geq 0$. In the numerical experiment, the primal-redundant constraint in VL takes the form
    \begin{equation}
        \begin{aligned} \label{eq:V-cuts11}
           2 \cdot \prod_{j \in [n]} \sigma_{v_j}(x_j^{(1)}) = 2 \cdot \prod_{j \in [n]} \sigma_{v_j}(x_j^{(2)}),\text{ for each vertex } \vv \text{ of } [0,1]^{n}.
        \end{aligned}
    \end{equation}
We empirically observe that such modification increases numerical performance.

\subsection{Choice of bundle methods for various Lagrangian Dual}

In this subsection, we give some implementation details of the bundle method \cite{bagirov2014introduction} we use to update multipliers in Lagrangian decomposition. Among many variants of bundle methods, we use the proximal bundle method (without changing analytic center) with adaptive step size and the bundle-level method (without changing analytic center). We empirically observed that both the bundle methods have similar performance when applying to L. On the other hand, we also observed that the proximal bundle method has a better numerical performance when applied to QL, while the bundle-level method has a better numerical performance when applied to VL. Therefore, in the numerical experimental, we used the proximal bundle method for both L and QL and we used the bundle-level method for VL. For the proximal bundle method, we use adaptive step size motivated by the famous Polyak step size \cite{polyak2021introduction} and recently numerically verified by \cite{monteiro2024parameter}. 
\begin{algorithm}
\label{alg:prox_bundle}
  \caption{A proximal bundle method}
  \Input{$f(\cdot),\text{LB}$}
  
  \# $f$ is a convex function and $\text{LB}$ is a lower bound of $f(\lambda)$
  
  Initialize $\lambda_1 = 0$
  $\text{UB} \leftarrow \infty,i=1$ 
  
    \While{\text{some termination criteria is not fulfilled}}{
    compute $f(\lambda_i),\nabla f(\lambda_i)$

    update LB be the optimal value of the following problem
       \begin{align*}
        \min_{t,\lambda} \ & t \\
        \text{s.t. } & t \geq \text{LB} \\
        & t \geq f(\lambda_j) + \iprod{\nabla f(\lambda_j)}{\lambda - \lambda_j},\forall j \in [i] 
    \end{align*}

    choose $\alpha = \frac{f(\lambda_i) - LB}{\norm{\nabla f(\lambda_i)}^2}$

    Let $\lambda_{i+1}$ be the optimal solution of the following problem
    \begin{align*}
        \min_{t,\lambda} & \norm{\lambda - \lambda_{i}}^2 + \alpha t \\
        \text{s.t. } & t \geq \text{LB} \\
        & t \geq f(\lambda_j) + \iprod{\nabla f(\lambda_j)}{\lambda - \lambda_j},\forall j \in [i] 
    \end{align*}
    $\text{UB} = \min\{\text{UB},f(\lambda_i)\},i = i + 1$
    }
\end{algorithm}

\begin{algorithm}
\label{alg:level_bundle}
  \caption{A bundle-level method}
  \Input{$f(\cdot),\text{LB}$}
  
  \# $f$ is a convex function and $\text{LB}$ is a lower bound of $f(\lambda)$
  
  Initialize $\lambda_1 = 0$
  $\text{UB} \leftarrow \infty,i=1$ 
  
    \While{\text{some termination criteria is not fulfilled}}{
    compute $f(\lambda_i),\nabla f(\lambda_i)$

    update LB be the optimal value of the following problem
       \begin{align*}
        \min_{t,\lambda} \ & t \\
        \text{s.t. } & t \geq \text{LB} \\
        & t \geq f(\lambda_j) + \iprod{\nabla f(\lambda_j)}{\lambda - \lambda_j},\forall j \in [i] 
    \end{align*}

    choose $\alpha = 0.3$

    Let $\lambda_{i+1}$ be the optimal solution of the following problem
    \begin{align*}
        \min_{t,\lambda} & \norm{\lambda - \lambda_{i}}^2 \\
        \text{s.t. } & t \leq \alpha \cdot \text{LB} + (1-\alpha) \cdot \text{UB} \\
        & t \geq f(\lambda_j) + \iprod{\nabla f(\lambda_j)}{\lambda - \lambda_j},\forall j \in [i] 
    \end{align*}
    $\text{UB} = \min\{\text{UB},f(\lambda_i)\},i = i + 1$
    }
\end{algorithm}

\end{document}